\newtheorem{teor}{Theorem}[section]
\newtheorem*{theorem*}{Theorem}
\newtheorem{lemma}[teor]{Lemma}
\newtheorem{prop}[teor]{Proposition}
\theoremstyle{definition}
\newtheorem{defin}[teor]{Definition}
\theoremstyle{remark}
\newtheorem{rmk}[teor]{Remark}
\newcommand{\OF}{\mathcal{O}_F}
\newcommand{\Z}{\mathbb{Z}}
\newcommand{\N}{\mathbb{N}}
\newcommand{\C}{\mathbb{C}}
\newcommand{\F}{\mathcal{F}}
\newcommand{\Ff}{\mathbb{F}}
\newcommand{\Fou}{\mathscr{F}}
\newcommand{\I}{\mathbbm{1}}
\renewcommand{\S}{\mathcal{S}}
\newcommand{\Rx}{R^{\times}}
\newcommand{\la}{\langle}
\newcommand{\ra}{\rangle}
\newcommand{\B}{\mathrm{B}_0(W)}
\newcommand{\BB}{\mathbb{B}_0(W)}
\newcommand{\A}{\mathbb{A}(W)}
\newcommand{\Mp}{\mathrm{Mp}(W)}
\newcommand{\Mpn}[1]{\mathrm{Mp}_#1(W)}
\newcommand{\Sp}{\mathrm{Sp}(W)}
\renewcommand{\d}{\mathbf{d}}
\renewcommand{\t}{\mathbf{t}}
\renewcommand{\r}{\mathbf{r}}
\newcommand{\s}{\mathbf{s}}
\newcommand{\End}{\mathrm{End}}
\newcommand{\Aut}{\mathrm{Aut}}
\newcommand{\Isom}{\mathrm{Iso}}
\newcommand{\virg}[1]{\textquotedblleft #1\textquotedblright}
\newcommand{\Hom}{\mathrm{Hom}}
\begin{document}
\title{\Large Weil representation and metaplectic groups over an integral domain}
\author{\large Gianmarco Chinello \and Daniele Turchetti}
\date{}
\maketitle


\begin{abstract}

\small Given $F$ a locally compact, non-discrete, non-archimedean field of characteristic $\neq 2$ and $R$ an integral domain such that a non-trivial smooth character $\chi: F\to \Rx$ exists, we construct the (reduced) metaplectic group attached to $\chi$ and $R$. We show that it is in most cases a double cover of the symplectic group over $F$. Finally we define a faithful infinite dimensional $R$-representation of the metaplectic group analogue to the Weil representation in the complex case.
\end{abstract}



\section*{Introduction}

\hspace{3ex} The present article deals with the seminal work of André Weil on the Heisenberg representation and the metaplectic group. In \cite{Wei} the author gives an interpretation of the behavior of theta functions throughout the definition of the metaplectic group with a complex linear representation attached to it, known as the Weil or metaplectic representation. A central tool in his construction is the group $T=\{z\in \C : |z|=1\}$, in which most computations are developed.\\
We replace $T$ with the multiplicative group of an integral domain $R$ and we construct a Weil representation in this more general context. The scope is to help fitting Weil's theory to give applications in the setting of modular representations (see, for example, \cite{Min2}).

\bigskip
The classical results of \cite{Wei} are the following. Let $X$ be a finite dimensional vector space over a local field $k$, $X^*$ its dual and $W=X\times X^*$. Let $A(W)$ be the product $W\times T$ with the Heisenberg group structure as defined in section \ref{defB0} below. The author studies the projective representation of the symplectic group  $\Sp$, coming from the action of $\Sp$ over a complex representation of $A(W)$. This projective representation lifts to an actual representation of a central extension $\Mp$ of $\Sp$, called \textbf{metaplectic group}. The lift is called nowadays \textbf{Weil representation} or metaplectic representation. The author shows also that the metaplectic group contains properly a subgroup $\mathrm{Mp}_2(W)$ which is a two-folded cover of $\Sp$ on which the Weil representation can be restricted. Moreover, if $k\neq\C$, it is not possible to restrict the Weil representation to $\Sp$. 

\bigskip
Let us recall another construction for the Weil representation in the complex case (cfr. for example \cite{MVW}). The starting point is the Stone-von Neumann theorem, asserting that, given a non-trivial character $\chi:k\to \C^\times$, there exists an infinite dimensional irreducible $\C$-representation of the Heisenberg group $$\rho: W\times k \to \Aut(S)$$ with central character $\chi$, and that it is unique up to isomorphism. The symplectic group acts on the Heisenberg group by $\sigma.(w,x)\mapsto (\sigma w, x)$ and this action is trivial on the center. Then, for every $\sigma\in \Sp$, the representation $\rho_\sigma:(w, x)\mapsto \rho(\sigma w, x)$ is irreducible and has the same central character $\chi$, so it is isomorphic to $\rho$. This means that there exists $\Psi_\sigma\in \Aut(S)$ such that $\Psi_\sigma\circ \rho \circ \Psi_\sigma^{-1}= \rho_\sigma. $
Notice that $\Psi_\sigma$ is unique up to multiplication by an element of $\C^\times$, by Schur's Lemma. We obtain in this way a faithful projective representation $$\begin{array}{ccc}
    \Sp & \to & \Aut(S)/\C^\times \\
    \sigma &\mapsto & \Psi_\sigma.\\
\end{array}$$
Defining $$\mathrm{Mp}_\C(W):=\Sp\times_{\Aut(S)/\C^\times}\Aut(S)$$ the metaplectic group comes out, by definition, with a representation that lifts the projective represntation of $\Sp$: the complex Weil representation.\\ Notice that this construction is more abstract than the one in \cite{Wei} and that repose on the irreducibility of $\rho$, which in general is not given when we replace $\C$ by $R$. We want to avoid the use of Stone-Von Neumann theorem and to give an explicit description of the Weil representation. This is why we choose to follow the approach of Weil rather than this construction.

\bigskip
Let $F$ be a locally compact, non-discrete, non-archimedean field of characteristic $\neq 2$. Let $p>0$ be the characteristic and $q$ the cardinality of its residue field.
We let $X$ be a $F$-vector space of finite dimension, we note $W=X\times X^*$ and we replace the group $T$ by the multiplicative group of an 
integral domain $R$ such that $p\in R^\times$, $R$ contains $p^n$-th roots of unity for every $n$ (to ensure the existence of a nontrivial smooth character $F\to R^\times$) and a square root of $q$. The object of this work is to show that the strategy of proof used by Weil can be adapted in this setting. Weil's techniques can be exploited in the same way whenever a result involves just the field $F$, like the intrinsic theory of quadratic forms over $X$ and the description of the symplectic group. Nevertheless, different kinds of problems occur in the new generality. The main issues are the lack of complex conjugation and complex absolute value. Because of this, Fourier and integration theory in the present work are different from the complex case; mainly we consider Haar measures with values in $R$ and operators acting over the space of $R$-valued Schwartz functions over an $F$-vector space instead of $L^2$-functions, using Vignéras' approach  (section I.2 of \cite{Vig1}). Moreover, allowing $R$ to be of positive characteristic makes it necessary to change some formulas, for example in the proof of Theorem \ref{gamma-1} to include the case where $q^2=1$ in $R$.

\bigskip
The central result of this paper is the existence of the \textbf{reduced metaplectic group}, which is defined in the following way. The starting point is the definition of the metaplectic group $\Mp$ and the existence of a non-split short exact sequence
\begin{equation}\tag{$\star$}\label{ses0}
1\longrightarrow \Rx \longrightarrow \Mp {\longrightarrow}\Sp \longrightarrow 1.
\end{equation}
Theorem \ref{rmg} and Theorem \ref{nonsplitta} give a description of a minimal subgroup of $\Mp$ which is a non-trivial extension of $\Sp$. We can summarize them in a unique statement:

\begin{theorem*}
Let $\mathrm{char}(R)\neq 2$. There exists a subgroup $\Mpn{2}$ of $\Mp$ such that the short exact sequence $($\ref{ses0}$)$ restricts to a short exact sequence 
\begin{equation*}\tag{$\star\star$}
1\longrightarrow \{\pm 1\} \longrightarrow \Mpn{2} {\longrightarrow}\Sp \longrightarrow 1
\end{equation*}
that does not split.
\end{theorem*}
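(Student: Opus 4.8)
The plan is to take $\Mpn{2}$ to be the subgroup of $\Mp$ generated by the distinguished operators lying above a convenient generating set of $\Sp$, and then to pin down its intersection with the scalars by a cocycle computation. Recall that $\Mp$ has been realized, inside (\ref{ses0}), as a group of pairs $(\sigma,M)$ with $\sigma\in\Sp$ and $M$ an automorphism of the Schwartz space $\S$ that normalizes the Heisenberg representation of $\A$ and induces $\sigma$ on it, with $\Rx$ embedded as the scalar operators. The Bruhat decomposition furnishes a generating set of $\Sp$ --- the elements $\d(a)$ coming from $GL(X)$, the unipotents $\t(\beta)$ acting by multiplication by the second-degree character $\chi\circ\beta$ attached to a symmetric form $\beta$, and the Fourier element realizing the relevant Weyl-group element --- and in the preceding sections each such $\sigma$ has been equipped with an explicit operator $r(\sigma)\in\Mp$ above it. I would then set $\Mpn{2}:=\langle\, r(\sigma)\,\rangle\le\Mp$. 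Since the $\sigma$'s generate $\Sp$ the map $\Mpn{2}\to\Sp$ is surjective, so (\ref{ses0}) restricts to $1\to K\to\Mpn{2}\to\Sp\to 1$ with $K=\Mpn{2}\cap\Rx$, and the theorem amounts to the two assertions $K=\{\pm 1\}$ and non-splitting.

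For the first, note that $r(\sigma)r(\tau)=\mathbf{c}(\sigma,\tau)\,r(\sigma\tau)$ for a scalar $\mathbf{c}(\sigma,\tau)\in\Rx$, and --- the scalars being central --- that $K$ is generated by the finitely many scalars $\rho(\mathcal{R})$ obtained by evaluating, in the $r(\sigma)$'s, the defining relators of a presentation of $\Sp$ in terms of the $\d(a)$, $\t(\beta)$, $\Fou$. Each $\rho(\mathcal{R})$ is a product of Weil indices $\gamma$ and Gauss-sum normalizations, read off from the explicit operator identities of the earlier sections; Theorem \ref{gamma-1} --- the explicit determination of $\gamma$, including the degenerate case $q^2=1$ in $R$ --- is precisely what forces each such combination to collapse into $\{\pm 1\}$, since the relevant powers and quotients of $\gamma$ reduce to $\pm1$-valued (Hilbert-symbol) quantities. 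This gives $K\subseteq\{\pm1\}$; that $K$ is all of $\{\pm1\}$ comes from one relator (already visible in rank one, see below) whose scalar is $-1$. It is here that $\mathrm{char}(R)\neq 2$ enters: it guarantees $-1\neq 1$ in $R$, so that $K$ is genuinely of order two and $\Mpn{2}$ a true double cover; for $\mathrm{char}(R)=2$ one would have $\{\pm1\}=\{1\}$ and the statement would be false.

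It remains to show that $(\star\star)$ does not split. The cleanest self-contained route restricts everything to an embedded rank-one symplectic subgroup $\mathrm{SL}_2(F)\cong\mathrm{Sp}(W_0)\le\Sp$: the preimage of $\mathrm{Sp}(W_0)$ in $\Mpn{2}$ is a $\{\pm1\}$-extension of $\mathrm{SL}_2(F)$, one computes an explicit relation among the rank-one operators $r(\sigma)$ --- a suitable power of the rank-one Fourier operator, evaluated through the Weil index of a form $x\mapsto ax^2$ as in Theorem \ref{gamma-1} --- whose lift is $-1$, so that this double cover of $\mathrm{SL}_2(F)$ is non-trivial; a homomorphic section of $(\star\star)$ would restrict to a homomorphic section of it, which is impossible. (Equivalently: a homomorphic section of $(\star\star)$, composed with $\Mpn{2}\hookrightarrow\Mp$, would split (\ref{ses0}), contradicting its non-splitness.)

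The main obstacle is the cocycle computation of the second paragraph, namely establishing the operator identities for the $r(\sigma)$'s over the $R$-valued Schwartz space $\S$ without any Hilbert-space input --- no complex conjugation, no absolute value --- so that the multiplier $\mathbf{c}$ is evaluated exactly on each relator and shown to be $\{\pm1\}$-valued. This demands careful tracking of the normalizations of the $R$-valued Haar measure and of the Fourier transform, and in particular a separate treatment of the case $q^2=1$ in $R$, where the factor involving $\sqrt q$ and several of the Gauss sums degenerate --- the same subtlety flagged around Theorem \ref{gamma-1}. Once those identities and the finitely many relator-checks are in place, assembling $\Mpn{2}\le\Mp$ with kernel $\{\pm1\}$ and non-split $(\star\star)$ is routine.
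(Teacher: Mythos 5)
Your construction of $\Mpn{2}$ as the subgroup generated by the canonical lifts $\r(\sigma)$, $\sigma\in\Omega(W)$, does not give a double cover in general, and this is the crux of the matter. As you note, the intersection of that subgroup with $\Rx$ is generated by the relator scalars, which by Theorem \ref{teoremar} are exactly the Weil factors $\gamma(f_0)$ of non-degenerate forms $f_0$ on $X$ (and every $f_0\in Q^{nd}(X)$ occurs, since $f_0=f_2+f_1'$ can be prescribed freely). But these scalars do \emph{not} \virg{collapse into $\{\pm1\}$}: the paper only proves $\gamma(f)^4=1$, and formula (\ref{formulagamma0}) shows that whenever $-1$ is not a square in $F$ there are forms with $\gamma(f)^2=\left(\frac{a}{-1}\right)=-1$, i.e.\ $\gamma(f)$ is a \emph{primitive} fourth root of unity (which really lies in $R$, by the remark following (\ref{formulagamma})). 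So when $-1\notin(F^\times)^2$ and $\mathrm{char}(R)\neq 2$, your subgroup meets $\Rx$ in $\mu_4(R)\supsetneq\{\pm1\}$ and is a four-fold, not a two-fold, cover. Theorem \ref{gamma-1} is not the tool that saves this; the needed input is formula (\ref{formulagamma}), $\gamma(f)^2=\left(\frac{D(f)}{-1}\right)\gamma(q_1)^{2m}$ (a consequence of Theorems \ref{Witt} and \ref{gamma-1}), which the paper uses to build a character $\psi_2:\Mp\to\Rx$ restricting to squaring on $\Rx$, via $\widetilde{\psi_2}(\sigma)=\left(\frac{\det(-\beta)}{-1}\right)\gamma(q_1)^{2m}$ on $\Omega(W)$ (Lemmas \ref{psitilda} and \ref{omsuf}); $\Mpn{2}$ is then $\ker\psi_2$ -- equivalently, one must \emph{twist} the lifts $\r(\sigma)$ by these Hilbert-symbol factors before generating, and no such correction appears in your proposal.

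The non-splitting argument also has a gap. Exhibiting one rank-one relation whose lift through the chosen operators equals $-1$ only shows that the particular section $\sigma\mapsto \r(\sigma)$ is not a homomorphism; it does not exclude a homomorphic section obtained by rescaling each $\r(\sigma)$ by a scalar (i.e.\ it does not show the $2$-cocycle is non-trivial, only that it is non-zero as a function). Your parenthetical alternative -- \virg{a section of $(\star\star)$ would split (\ref{ses0}), contradicting its non-splitness} -- assumes exactly what must be proved, since the non-splitting of (\ref{ses0}) is not established elsewhere in your argument. The paper's proof of Theorem \ref{nonsplitta} supplies the missing idea: assuming a character $\psi$ of $\Mp$ restricting to the identity on $\Rx$ (which any splitting would produce), one restricts to $\mathrm{Mp}(F\times F^*)$, observes via (\ref{relazMp}) that every $\t(f)$ is a commutator, hence $\psi'(\t(f))=1$, and deduces from the defining relation of $\gamma$ that $\gamma(aq_1)=\psi'(\d(2a))\psi'(\d'(\rho_1^{-1}))$; multiplicativity of $\psi'\circ\d$ then forces $\gamma$ to depend only on the dimension and discriminant, contradicting $\gamma(n)=-1$ versus $\gamma(q_2\oplus(-q_2))=1$ (both of dimension $4$ with square discriminant). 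This is where Theorem \ref{gamma-1} actually enters, and it is an argument against \emph{all} possible sections, not just the canonical one.
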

\noindent
This result permits the definition of a Weil representation of $\Mp$, that we describe explicitly.

\newpage
\noindent
Let us describe the main body of the article.

\bigskip \noindent
Section $1$ contains a brief explanation of basic notations and definitions where essentially no new result appears. However some features differ from the one in \cite{Wei}. We introduce the integration theory in our setting that is slightly different from the complex one. Over a $F$-vector space, we consider a $R$-valued Haar measure as in \cite{Vig1}, that exists since $p\in \Rx$ and the $R$-module of Schwartz functions, i.e. compactly supported locally constant functions, in place of $L^2$-functions. The main differences with the complex case are that may exist non-empty open subsets of the vector space with zero volume if the characteristic of $R$ is positive, and that integrals of Schwartz functions are actually finite sums. This theory permits also the definition of a Fourier transform and of its inverse. In the end of the section we study element of $\Sp$ as matrices acting over $W$. We consider this as a left action (rather than on the right, as in \cite{Wei}) but we want to show the same formulas. Then we have to change some definitions \textit{ad hoc}.\\
In section $2$ we define the faithful \textbf{Heisenberg representation} $U$ of $A(W)$ on the $R$-module of Schwartz functions of $X$ and we introduce the groups $\B$ of automorphisms of $A(W)$ acting trivially on the center and $\BB$, the normalizer of $U(A(W))$ in $\Aut(\S(X))$. After that we define
$\Mp$, as a fibered product of $\Sp$ and $\BB$ over $\B$, and the sequence (\ref{ses0}), proving that it is exact. This fact is a direct consequence of the Theorem \ref{ses1}, stating the exactness of a sequence of the form
\begin{equation*}
1\longrightarrow R^\times \longrightarrow \BB \overset{\pi_0}{\longrightarrow} \B \longrightarrow 1.
\end{equation*}
The proof of the analogue of Theorem \ref{ses1} in sections 8, 9, 10 of \cite{Wei} uses a construction that has been introduced by Segal in the setting of complex unitary operators for $L^2$-functions (cfr. chapter 2 of \cite{Seg}). It is indeed possible to mimic it for Schwartz functions over $R$, but this does not yield surjectivity of $\pi_0$ when $R$ has not unique factorization. In fact Lemma 2 in \cite{Wei} does not hold in our setting. To get around this problem we give explicit generators of $\B$ and we show that they are in the image of $\pi_0$.\\
In section $3$ we define the Weil factor $\gamma(f)\in R^\times$, associated to a quadratic form $f$ over $F$. It is the constant that permits to transpose some relations between maps taking values in $\B$, to the liftings of those maps, that take values in $\BB$. We prove some properties of the map $\gamma:f\mapsto \gamma(f)$ and an explicit summation formula for $\gamma(f)$.\\ 
In section $4$ we go further into the study of properties of the Weil factor. In Theorem \ref{gamma-1} we prove that $\gamma(n)=-1$, where $n$ is the reduced norm over the quaternion algebra over $F$. To prove this theorem we can not use directly the proof in \cite{Wei} since the author shows that $\gamma(n)$ is a negative real number of absolute value $1$ and computes integrals on subsets that may have volume zero when $R$ is of positive characteristic. The key tool is the summation formula proved in section 3. We also show that $\gamma$ respects the Witt group structure over the set of quadratic forms and, combining this with known results over quadratic forms, we show that $\gamma(f)$ is at most a fourth root of unity in $R^\times$. \\
Finally, in section $5$, we use the results from previous sections to construct the reduced metaplectic group and prove the main theorem: we build up a $R$-character of $\Mp$, whose restriction on $\Rx$ is the map $x\mapsto x^2$, and we define $\Mpn{2}$ as its kernel. We prove that $\Mpn{2}$ is a cover of $\Sp$ with kernel the group of square roots of unity in $R$, so that if $\mathrm{char}(R)=2$ the sequence (\ref{ses0}) splits.

\bigskip
The existence of a Weil representation over $R$ is a result which is strongly motivated by recent research problems. Minguez studies local theta correspondences in \cite{Min1} and $p$-adic reductive pairs in the $\ell$-modular case in \cite{Min2}. Here he asks how does Howe correspondence behave with respect to reduction modulo $\ell$ and he suggests that a Weil representation over $\bar \Ff_\ell$ has to be constructed and the theory of the metaplectic group has to be extended. A general construction with a strong geometric flavor is given by Shin to study the case of representations over $\bar \Ff_p$, which is not possible to treat following a naive approach. In \cite{Shi}, the author defines in great generality $p$-adic Heisenberg group schemes over a noetherian scheme. He proves a geometric analogue of Stone-Von Neumann theorem and Schur's lemma. Thanks to this he is able to construct a Weil representation provided the existence of a (geometric) Heisenberg representation. Showing that the latter exists for every algebraically closed field in every characteristic, he is able to define the new notion of mod $p$-Weil representation. The great advantage of his construction is in fact the possibility to treat the case where $\mathrm{char}(R) = p$ (in this case every character $F\to \Rx$ is trivial, so one does really need another approach). On the other hand an elementary approach, like the one in the present article, permits to define a Weil representation over integral domains that are not fields. The possibility of working in this generality is motivated, among other things, by the recent interest in representation theory of reductive groups over discrete valuation rings. We cite, for example, the paper \cite{EmHe} of Emerton and Helm on Langlands correspondences \virg{in families}. Finally we shall mention the works of Gurevich and Hadani (see, for example, \cite{GuHa1} and \cite{GuHa2}) that generalize several constructions of \cite{Wei}, still remaining in the context of complex representations (i.e. with $R=\C$). 



\section{Notation and definitions}
Let $F$ be a locally compact non-archimedean field of characteristic different from $2$. We write $\OF$ for the ring of integers of $F$, we fix a uniformizer $\varpi$ of $\OF$, we denote $p$ the residue characteristic and $q$ the cardinality of the residue field of $F$. Let $R$ be an integral domain such that $p\in\Rx$. We assume that there exists a smooth non-trivial character $\chi:F\longrightarrow \Rx$, that is a group homomorphism from $F$ to $\Rx$ whose kernel is an open subgroup of $F$. These properties assure the existence of an integer $l=\min\{j\in \Z\,|\,\varpi^{j}\OF\subset \ker(\chi)\}$ called the \emph{conductor} of $\chi$.



\subsection*{Quadratic forms}

\noindent We denote by $G$ any finite dimensional vector space over $F$.

\vspace{0.2cm}
\noindent 
We recall that a \emph{quadratic form} on $G$ is a continuous map $f:G\rightarrow F$ such that $f(ux)=u^2f(x)$ for every $x\in G$ and $u\in F$ and $(x,y)\longmapsto f(x+y)-f(x)-f(y)$ is $F$-bilinear. A \emph{character of degree 2} of $G$ is a map $\varphi:G\rightarrow \Rx$ such that $(x,y)\longmapsto \varphi(x+y)\varphi(x)^{-1}\varphi(y)^{-1}$ is a bicharacter (i.e. a smooth character on each variable) of $G\times G$. 
We denote by $Q(G)$ the $F$-vector space of quadratic forms on $G$, by $X_2(G)$ the group of characters of degree 2 of $G$ endowed with the pointwise multiplication and by $X_1(G)$ the multiplicative group of smooth $R$-characters of $G$, that is a subgroup of $X_2(G)$.

\vspace{0.2cm}
\noindent 
We denote by $G^*=\Hom(G,F)$ the dual vector space of $G$. We write $[x,x^*]=x^*(x)\in F$ and $\la x,x^*\ra=\chi\left([x,x^*]\right)\in\Rx$ for every $x\in G$ and $x^*\in G^*$. We identify $(G^*)^*=G$ by means of $[x^*,x]=[x,x^*]$.
We have a group isomorphism
\begin{equation}\label{isomduali}
\begin{array}{ccl}
G^*&\longrightarrow &X_1(G)\\
x^*&\longmapsto & \la\, \cdot\,,x^*\ra.
\end{array}
\end{equation}
Indeed if $\la x,x^*\ra=1$ for every $x\in X$ then $[x,x^*]\in\ker(\chi)$ for every $x\in X$ and this implies that $x^*=0$ since $\ker(\chi)\neq F$. The surjectivity follows by Theorem II.3 of \cite{Wei2} and I.3.9 of \cite{Vig1}.

\begin{defin}\label{defB}
Let $\mathcal{B}$ be the bilinear map from $(G\times G^*)\times(G\times G^*)$ to $F$ defined by $\mathcal{B}\big((x_1,x_1^*),(x_2,x_2^*)\big)=[x_1,x_2^*]$
and let $\F=\chi\circ\mathcal{B}$.
\end{defin}

\noindent
For a $F$-linear map $\alpha:G\rightarrow H$ we denote by $\alpha^*:H^* \to G^*$ its \emph{transpose}. If $H=G^*$ and $\alpha=\alpha^*$ we say that $\alpha$ is \emph{symmetric}. We associate to every quadratic form $f$ on $G$ the symmetric homomorphism $\rho=\rho(f):G\rightarrow G^*$ defined by $\rho(x)(y)=f(x+y)-f(x)-f(y)$ for every $x,y\in G$. Since $\mathrm{char}(F)\neq 2$, the map $f\mapsto \rho(f)$ is an isomorphism from $Q(G)$ to the $F$-vector space of symmetric homomorphisms from $G$ to $G^*$ with inverse the map sending $\rho$ to the quadratic form $f(x)=[x,\frac{\rho(x)}{2}]$.
We say that $f\in Q(G)$ is \emph{non-degenerate} if $\rho(f)$ is an isomorphism and we denote by $Q^{nd}(G)$ the subgroup of $Q(G)$ of non-degenerate quadratic forms on $G$. We remark that the composition with the character $\chi$ gives an injective group homomorphism from $Q(G)$ to $X_2(G)$.


\subsection*{Integration theory}
\noindent
Let $dg$ be a Haar measure on $G$ with values in $R$ (see I.2 of \cite{Vig1}). 
We denote by $\S(G)$ the $R$-module of compactly supported locally constant functions on $G$ with values in $R$. We can write every $\Phi\in\S(G)$ as $\Phi=\sum_{h\in K_1/K_2}x_h \I_{h+K_2}$
where $K_1$ and $K_2$ are two 
compact open subgroups of $G$, $x_h\in R$, $\I_{h+K_2}$ is the characteristic function of $h+K_2$ and the sum is taken over the finite number of right cosets of $K_2$ in $K_1$.

\vspace{0,2cm}
\noindent
The \emph{Fourier transform} of $\Phi\in\S(G)$ is the function from $G^*$ to $R$ defined by
\begin{equation}\label{Fourier}
\Fou\Phi(g^*)=\int_G\Phi(g)\la g,g^*\ra dg
\end{equation}
for every $g^*\in G^*$. 

\vspace{0,2cm}
\noindent
For every compact open subgroup $K$ of $G$ let $K_*=\{g^*\in G^*\,|\,\la k,g^*\ra=1 \,\forall\, k\in K \}$ define a subgroup of $G^*$. Notice that the map $K\mapsto K_*$ is inclusion-reversing.\\ 
If $L$ is any $\OF$-lattice of $G$ and $l$ is the conductor of $\chi$, then $L_*=\{g^*\in G^*\,|\, g^*(L)\subset \varpi_F^l\OF \}$. Explicitly, if $L=\bigoplus_i\varpi_F^{a_i}\OF$ (with $a_i\in\Z$ for all $i$) with respect a fixed basis $(e_1,\dots,e_N)$ of $G$, then $L_*=\bigoplus_i\varpi_F^{l-a_i}\OF$ with respect to the dual basis of $(e_1,\dots,e_N)$ of $G^*$. 
These facts imply that $K_*$ is a compact open subgroup of $G^*$ for every compact open subgroup $K$ of $G$.

\vspace{0,2cm}
\noindent
Given a Haar measure $dg$ on $G$ such that $\mathrm{vol}(K',dg)=1$ we call \emph{dual measure of $dg$} the Haar measure $dg^*$ on $G^*$ such that $\mathrm{vol}(K'_*,dg^*)=1$.

\vspace{0,2cm}
\noindent
The \emph{inverse Fourier transform} of $\Psi\in\S(G^*)$ is the function from $G$ to $R$ defined by
\begin{equation}\label{Fourierinverse}
\Fou^{-1}\Psi(g)=\int_{G^*}\Psi(g^*)\la g,-g^*\ra dg^*
\end{equation}
for every $g\in G$. 

\vspace{0,2cm}
\noindent
For every $\Psi_1,\Psi_2\in\S(G^*)$, we denote by $\Psi_1*\Psi_2\in\S(G^*)$ the \emph{convolution product} defined by 
\[(\Psi_1*\Psi_2)(x^*)=\int_{G^*} \Psi_1(g^*)\Psi_2(x^*-g^*)dg^*\] for every $x^*\in G^*$.

\begin{prop}\label{teorFourier}
Formulas (\ref{Fourier}) and (\ref{Fourierinverse}) give an isomorphism of $R$-algebras from $\S(G)$, endowed with the pointwise product, to $\S(G^*)$, endowed with the convolution product.
\end{prop}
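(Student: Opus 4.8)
The plan is to reduce everything to characteristic functions of cosets of $\OF$-lattices: every $\Phi\in\S(G)$ is a finite $R$-linear combination of functions $\I_{h+L}$ with $L$ an $\OF$-lattice of $G$ (replace the compact open subgroup $K_2$ appearing in the presentation of $\Phi$ by an $\OF$-lattice contained in it and split $h+K_2$ into cosets of $L$), so it suffices to understand $\Fou$ on these, and the first thing I would record is the orthogonality relation for additive characters: for an $\OF$-lattice $L$ and $g^*\in G^*$,
\[
\int_L\la g,g^*\ra\,dg=
\begin{cases}\mathrm{vol}(L,dg)&\text{if }g^*\in L_*,\\ 0&\text{otherwise.}\end{cases}
\]
The first case is immediate; for the second, if $\la\,\cdot\,,g^*\ra$ is non-trivial on $L$ one picks $g_0\in L$ with $\la g_0,g^*\ra\neq1$ and, translating the integral by $g_0$, obtains $\bigl(1-\la g_0,g^*\ra\bigr)\int_L\la g,g^*\ra\,dg=0$; since $R$ is an integral domain this forces the integral to vanish. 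This is essentially the only place where the hypotheses on $R$ intervene (together with the fact that $q$, being a power of $p$, lies in $\Rx$, so that all the lattice indices below are units of $R$). From it one reads off $\Fou(\I_{h+L})=\mathrm{vol}(L,dg)\,\la h,\,\cdot\,\ra\,\I_{L_*}$, which belongs to $\S(G^*)$ because $L_*$ is a compact open subgroup of $G^*$ and $\la h,\,\cdot\,\ra$ is locally constant. Hence $\Fou$ maps $\S(G)$ into $\S(G^*)$ and is plainly $R$-linear; symmetrically $\Fou^{-1}\bigl(\la h,\,\cdot\,\ra\,\I_{L_*}\bigr)=\mathrm{vol}(L_*,dg^*)\,\I_{h+(L_*)_*}$.

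Next I would prove Fourier inversion. Composing the two displayed formulas gives $\Fou^{-1}\Fou(\I_{h+L})=\mathrm{vol}(L,dg)\,\mathrm{vol}(L_*,dg^*)\,\I_{h+(L_*)_*}$, so I need two facts: $(L_*)_*=L$, and $\mathrm{vol}(L,dg)\,\mathrm{vol}(L_*,dg^*)=1$ for every lattice $L$. Both follow from the explicit description recalled before Definition~\ref{defB}: writing $L=\bigoplus_i\varpi^{a_i}\OF$ one has $L_*=\bigoplus_i\varpi^{\,l-a_i}\OF$, whence $(L_*)_*=L$ and, for lattices $L_2\subseteq L_1$, the index identity $[L_1:L_2]=[(L_2)_*:(L_1)_*]$. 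The volume identity then follows: it holds for the reference lattice $K'$ by the very definition of the dual measure, and passing from $K'$ to any other lattice $L$ multiplies $\mathrm{vol}(L,dg)$ and $\mathrm{vol}(L_*,dg^*)$ by mutually inverse unit factors of $R$. Therefore $\Fou^{-1}\Fou=\mathrm{id}_{\S(G)}$ on the spanning set, hence everywhere, and $\Fou\Fou^{-1}=\mathrm{id}_{\S(G^*)}$ by the symmetric argument, so $\Fou$ is a bijection with inverse $\Fou^{-1}$.

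Finally, multiplicativity: I would verify $\Fou(\Phi_1\Phi_2)=\Fou\Phi_1*\Fou\Phi_2$ for $\Phi_i=\I_{h_i+L}$ over a common $\OF$-lattice $L$, which suffices by bilinearity. Substituting $\Fou\Phi_i=\mathrm{vol}(L,dg)\,\la h_i,\,\cdot\,\ra\,\I_{L_*}$ into the convolution integral, the integrand is supported on $L_*$, and the orthogonality relation applied on $L_*$ (using $(L_*)_*=L$) collapses it to $\mathrm{vol}(L,dg)^2\,\mathrm{vol}(L_*,dg^*)\,\la h_2,\,\cdot\,\ra\,\I_{L_*}$ when $h_1+L=h_2+L$, and to $0$ otherwise; by $\mathrm{vol}(L,dg)\,\mathrm{vol}(L_*,dg^*)=1$ this equals $\Fou(\I_{h_2+L})=\Fou(\I_{h_1+L}\,\I_{h_2+L})$ in the first case and $\Fou(0)$ in the second. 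The hard part — indeed the only non-formal ingredient — will be the measure bookkeeping of the previous paragraph, namely knowing that $\mathrm{vol}(L,dg)\,\mathrm{vol}(L_*,dg^*)=1$ holds for all lattices simultaneously; once that and the survival of character orthogonality over an integral domain are in place, the rest is routine.
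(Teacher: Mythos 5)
Your overall route is the paper's own: reduce to characteristic functions of cosets, prove orthogonality of characters using that $R$ is an integral domain, obtain $\Fou\I_{h+L}=\mathrm{vol}(L,dg)\,\la h,\cdot\,\ra\,\I_{L_*}$, use the explicit coordinate description of $L_*$ for an $\OF$-lattice to get biduality and the volume identity, and deduce inversion. The only genuinely different choices are that you work exclusively with lattices (the paper runs the same computations for arbitrary compact open subgroups $K$ and uses lattices only to identify $K_{**}$), and that you check multiplicativity on coset functions over a common lattice, whereas the paper verifies $\Fou^{-1}(\Psi_1*\Psi_2)=\Fou^{-1}(\Psi_1)\cdot\Fou^{-1}(\Psi_2)$ for arbitrary $\Psi_1,\Psi_2\in\S(G^*)$ by a change of variables (harmless here, integrals being finite sums). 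Both variants are fine.

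The one step you should repair is the normalization $\mathrm{vol}(L,dg)\,\mathrm{vol}(L_*,dg^*)=1$, which you rightly call the crux but justify by referring to \virg{the reference lattice $K'$}: in the paper $K'$ is only a compact open subgroup with $\mathrm{vol}(K',dg)=1$, and nothing guarantees that it is an $\OF$-lattice, nor even that a lattice of volume $1$ exists for the given $dg$. Your index identity $[L_1:L_2]=[(L_2)_*:(L_1)_*]$ is available (via simultaneous diagonalization) only for pairs of lattices, so it shows that $c:=\mathrm{vol}(L,dg)\,\mathrm{vol}(L_*,dg^*)$ is independent of the lattice $L$, but not that $c=1$; if $c\neq1$ you would only get $\Fou^{-1}\Fou=c\cdot\mathrm{id}$ and $\Fou(\Phi_1\Phi_2)=c^{-1}\,\Fou\Phi_1*\Fou\Phi_2$, so the constant really matters. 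To pin down $c=1$ you must connect a lattice to $K'$: either compute $\Fou^{-1}\Fou\I_{K'}$ in two ways, once directly (your orthogonality argument works verbatim for any compact open subgroup, giving $\mathrm{vol}(K',dg)\,\mathrm{vol}(K'_*,dg^*)\,\I_{K'_{**}}=\I_{K'_{**}}$) and once through the decomposition $\I_{K'}=\sum_{h\in K'/L}\I_{h+L}$ for a lattice $L\subset K'$, which gives $c\,\I_{K'}$, and then evaluate at $0$ --- this is exactly how the paper closes the loop --- or prove $[K':L]=[L_*:K'_*]$ via the two injections $K'/L\hookrightarrow\Hom(L_*/K'_*,\Rx)$ and $L_*/K'_*\hookrightarrow\Hom(K'/L,\Rx)$ together with $|\Hom(A,\Rx)|\le|A|$ for finite abelian $A$ (since $x^d-1$ has at most $d$ roots in an integral domain). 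With either supplement your argument is complete.
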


\begin{proof}
The $R$-linearity of $\Fou$ and $\Fou^{-1}$ is clear from their definitions. Let now $K$ be a compact open subgroup of $G$ and $h\in G$; we have that
$$\Fou \I_{h+K}(g^*)=\int_G\I_{K}(g-h)\la g,g^*\ra dg=\la h,g^*\ra\int_K\la g,g^*\ra dg.$$
Moreover we have $\int_K \la  g,g^*\ra dg=\la k,g^*\ra\int_K\la g,g^*\ra dg$ for every $k\in K$ and, since $R$ is an integral domain, we obtain that 
$\Fou \I_{h+K}(g^*)=\mathrm{vol}(K,dg)\la h,g^*\ra\I_{K_*}(g^*).$
Then $\Fou\Phi\in\S(G^*)$ for every $\Phi\in\S(G)$, since $\Fou$ is $R$-linear and $\Phi$ is a finite sum of the form $\sum_h x_h\I_{h+K_1}$ with $x_h\in R$ and $K_1$ a compact open subgroup of $G$.\\
Denoting $K_{**}=\{g\in G\,|\,\la g,g^*\ra \,\forall g^*\in K_*\}$ we have that 
\begin{align*}
\Fou^{-1}\Fou\I_{h+K}(g)&=\mathrm{vol}(K,dg)\int_{G^*}\la h,g^*\ra\I_{K_*}(g^*)\la g,-g^*\ra dg^*=\mathrm{vol}(K,dg)\int_{K_*}\la h-g,g^*\ra dg^*\\
&=\mathrm{vol}(K,dg)\mathrm{vol}(K_*,dg^*)\I_{h+K_{**}}.
\end{align*}
Moreover if $L=\bigoplus_i\varpi_F^{a_i}\OF$ is an $\OF$-lattice of $G$ as above then $L_{**}=\bigoplus_i\varpi_F^{l-(l-a_i)}\OF=L$. Let now $L$ be an $\OF$-lattice and $K$ be a compact open subgroup of $G$ such that $L\subset K$; we can write $\I_K=\sum_{h\in K/L}I_{h+L}$ and then we obtain
\begin{align*}
\Fou^{-1}\Fou\I_K&=\mathrm{vol}(K,dg)\mathrm{vol}(K_*,dg^*)\I_{K_{**}}\\
                 &=\mathrm{vol}(L,dg)\mathrm{vol}(L_*,dg^*)\sum_{h\in K/L}\I_{h+L_{**}}= \mathrm{vol}(L,dg)\mathrm{vol}(L_*,dg^*)\I_K.
\end{align*}
This implies that $K=K_{**}$ and $\mathrm{vol}(K,dg)\mathrm{vol}(K_*,dg^*)=1$ for every compact open subgroup $K$ of $G$. This proves that $\Fou$ is an isomorphism whose inverse is $\Fou^{-1}$.\\
Finally for every $\Psi_1,\Psi_2\in\S(G^*)$ we have 
\begin{align*}
\Fou^{-1}(\Psi_1*\Psi_2)(g)&=\int_{G^*}\int_{G^*}\Psi_1(g_1^*)\Psi_2(g_2^*-g_1^*)dg_1^*\la -g,g_2^*\ra dg_2^*
\\
&=\int_G\Psi_1(g_1^*)\int_G\Psi_2(g_3^*)\la -g,g_3^*+g_1^*\ra dg_3^*dg_1^*=\Fou^{-1}(\Psi_1)(g)\cdot\Fou^{-1}(\Psi_2)(g)
\end{align*}
where we have used the change of variables $g_2^*\longmapsto g_3^*=g_2^*-g_1^*$.
\end{proof}


\begin{defin}\label{module}
Let $G$ and $H$ be two finite dimensional $F$-vector spaces 
and let $dx$ and $dy$ be two Haar measures on $G$ and $H$. If $\nu:G\longrightarrow H$ is an isomorphism then the \emph{module} of $\nu$ is the constant $|\nu|=\frac{d(\nu x)}{dy}$, which means that we have
$$\int_{H}\Phi(y)dy=|\nu|\int_{G}\Phi(\nu(x))dx$$
where $\Phi\in\S(H)$. Notice that it is an integer power of $q$ in $R$.
\end{defin}

\noindent
If $dx^*$ and $dy^*$ are the dual measures on $G^*$ and $H^*$ of $dx$ and $dy$, then $|\nu|=|\nu^*|$ for every isomorphism $\nu:G\longrightarrow H$. Indeed if $K$ is a compact open subgroup of $G$ then 
$$\mathrm{vol}(K,dx)=|\nu|^{-1}\mathrm{vol}(\nu(K),dy)=|\nu|^{-1}\mathrm{vol}(\nu(K)_{*},dy^*)^{-1}=|\nu|^{-1}|\nu^*|\mathrm{vol}(\nu^{*}(\nu(K))_{*},dx^*)^{-1}$$
and $\nu^{*}(\nu(K))_{*}=\{g^*\in G^*\,|\,\la\nu(k),\nu^{*-1}(g^*)\ra=1 \; \forall \; k\in K\}=K_*$. Then $|\nu|=|\nu^*|$.\\
Moreover if $G=H$ and $dx=dy$ we have that $|\nu|$ is independent of the choice of the Haar measure $dx$ on $G$.


\subsection*{The symplectic group}
\noindent From now on, let $X$ be a finite dimensional $F$-vector space and let $W$ be the $F$-vector space $X\times X^*$. We denote by $\Sp$ the group of symplectic automorphisms of $W$, said to be the \emph{symplectic group} of $W$, that is the group of automorphisms of $W$ such that 
\begin{equation}\label{condsimpl}
\mathcal{B}\big(\sigma(w_1),\sigma(w_2)\big)-\mathcal{B}\big(\sigma(w_2),\sigma(w_1)\big)=\mathcal{B}(w_1,w_2)-\mathcal{B}(w_2,w_1),
\end{equation}
or equivalently, by (\ref{isomduali}), such that
$\mathcal{F}\big(\sigma(w_1),\sigma(w_2)\big)\mathcal{F}\big(\sigma(w_2),\sigma(w_1)\big)^{-1}=\mathcal{F}(w_1,w_2)\mathcal{F}(w_2,w_1)^{-1}.$

\vspace{0,2cm}
\begin{prop}\label{simplFlineare}
Every group automorphism $\sigma:W\longrightarrow W$ which satisfies (\ref{condsimpl}) is $F$-linear.
\end{prop}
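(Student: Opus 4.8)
The plan is to package condition (\ref{condsimpl}) as the statement that $\sigma$ preserves the alternating $F$-bilinear form $\mathcal{E}$ on $W$ defined by $\mathcal{E}(w_1,w_2)=\mathcal{B}(w_1,w_2)-\mathcal{B}(w_2,w_1)$; indeed (\ref{condsimpl}) says exactly $\mathcal{E}(\sigma(w_1),\sigma(w_2))=\mathcal{E}(w_1,w_2)$ for all $w_1,w_2\in W$. The first step is to check that $\mathcal{E}$ is non-degenerate. Writing $w_i=(x_i,x_i^*)$ one has $\mathcal{E}(w_1,w_2)=[x_1,x_2^*]-[x_2,x_1^*]$; if this vanishes for every $w_2$, then specializing $x_2=0$ gives $[x_1,x_2^*]=0$ for all $x_2^*\in X^*$, hence $x_1=0$, and specializing $x_2^*=0$ gives $[x_2,x_1^*]=0$ for all $x_2\in X$, hence $x_1^*=0$. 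Thus $\mathcal{E}$ is non-degenerate, which is the only property of $\mathcal{E}$ we shall use.

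Next I would fix $\lambda\in F$ and $w\in W$ and, for an arbitrary $v\in W$, compute, using the $F$-bilinearity of $\mathcal{E}$ and its invariance under $\sigma$,
\begin{align*}
\mathcal{E}\big(\sigma(\lambda w)-\lambda\sigma(w),\,\sigma(v)\big)
&=\mathcal{E}\big(\sigma(\lambda w),\sigma(v)\big)-\lambda\,\mathcal{E}\big(\sigma(w),\sigma(v)\big)\\
&=\mathcal{E}(\lambda w,v)-\lambda\,\mathcal{E}(w,v)=0.
\end{align*}
Since $\sigma$ is surjective, $\sigma(v)$ runs over all of $W$ as $v$ does, so non-degeneracy of $\mathcal{E}$ forces $\sigma(\lambda w)-\lambda\sigma(w)=0$, i.e. $\sigma(\lambda w)=\lambda\sigma(w)$. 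Combined with the additivity of $\sigma$ (automatic, as $\sigma$ is a group homomorphism), this yields that $\sigma$ is $F$-linear.

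There is no serious obstacle here: the computation is two lines, and the only point requiring a little care is the verification that $\mathcal{E}$ has trivial radical, together with the remark that surjectivity of $\sigma$ is precisely what upgrades "$\mathcal{E}(u,\sigma(v))=0$ for all $v$" to "$u=0$". Note in particular that no topological input (local compactness, continuity of $\sigma$) enters the argument. As a byproduct, running the same computation with $w_1+w_2$ in place of $\lambda w$ re-derives additivity, so in fact the conclusion holds for any bijection $\sigma$ of $W$ satisfying (\ref{condsimpl}).
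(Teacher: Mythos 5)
Your proof is correct and is essentially the paper's argument: the paper performs the same computation (applying (\ref{condsimpl}) to $uw_1$ and subtracting $u$ times the original identity to get $\mathcal{B}\big(\sigma(uw_1)-u\sigma(w_1),\sigma(w_2)\big)=\mathcal{B}\big(\sigma(w_2),\sigma(uw_1)-u\sigma(w_1)\big)$, i.e. that $\sigma(uw_1)-u\sigma(w_1)$ is $\mathcal{E}$-orthogonal to the image of $\sigma$), and then uses surjectivity of $\sigma$ onto $0\times X^*$ and $X\times 0$ where you instead invoke the non-degeneracy of $\mathcal{E}$ on all of $W$ — a purely cosmetic difference in how the final non-degeneracy step is organized.
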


\begin{proof}
Applying the change of variables $w_1\mapsto uw_1$ with $u\in F$ in the equality (\ref{condsimpl}), we obtain
$\mathcal{B}\big(\sigma(uw_1),\sigma(w_2)\big)-\mathcal{B}\big(\sigma(w_2),\sigma(uw_1)\big)=u\big(\mathcal{B}(w_1,w_2)-\mathcal{B}(w_2,w_1)\big)$
and then using (\ref{condsimpl}) again we obtain
$\mathcal{B}\big(\sigma(uw_1)-u\sigma(w_1),\sigma(w_2)\big)=\mathcal{B}\big(\sigma(w_2),\sigma(uw_1)-u\sigma(w_1)\big)$ for every $w_1,w_2\in W$.
This implies that $\mathcal{B}\big(\sigma(uw_1)-u\sigma(w_1),\sigma(w_2)\big)=0$ for every $w_2\in\sigma^{-1}(0\times X^*)$ and $\mathcal{B}\big(\sigma(w_2),\sigma(uw_1)-u\sigma(w_1)\big)=0$ for every $w_2\in\sigma^{-1}(X\times 0)$. Then $\sigma(uw_1)=u\sigma(w_1)$ for every $w_1\in W$. 
\end{proof}

\noindent
We can write every $\sigma\in \Sp$ as a matrix of the form $\begin{pmatrix}\alpha&\beta\\ \gamma&\delta\end{pmatrix}$ where $\alpha:X\rightarrow X$, $\gamma:X\rightarrow X^*$, $\beta:X^*\rightarrow X$ and $\delta:X^*\rightarrow X^*$ are $F$-linear. 
The transpose of $\sigma$ is $\sigma^*=\begin{pmatrix}\alpha^*&\gamma^*\\ \beta^*&\delta^*\end{pmatrix}$ which is an automorphism of $W^*=X^*\times X$ such that $|\sigma^*|=|\sigma|$. Furthermore if $\xi:X\times X^*\longrightarrow X^*\times X$ is the isomorphism defined by $(x,x^*)\longmapsto (-x^*,x)$ and $\sigma^I=\xi^{-1}\sigma^*\xi=\begin{pmatrix}\delta^*&-\beta^*\\ -\gamma^*&\alpha^*\end{pmatrix}$, then we have $|\sigma|=|\sigma^I|$.
With these definitions, an element $\sigma\in\Aut(W)$ is symplectic if and only if $\sigma^I\sigma=1$ and then the module of every symplectic automorphism is equal to $1$.\\
Moreover we can remark that if $\sigma\in\Sp$ then $\alpha^*\gamma=\gamma^*\alpha:X\longrightarrow X^*$ and $\beta^*\delta=\delta^*\beta:X^*\longrightarrow X$ are symmetric homomorphisms and $\alpha^*\delta-\gamma^*\beta=1$ and $\delta^*\alpha-\beta^*\gamma=1$.

\vspace{0.25cm}
\noindent
We associate to every $\sigma\in \Sp$ the quadratic form defined by 
$$f_{\sigma}(w)=\frac{1}{2}\big(\mathcal{B}(\sigma(w),\sigma(w))-\mathcal{B}(w,w)\big).$$ It is easy to check that $f_{\sigma_1\circ\sigma_2}=f_{\sigma_1}\circ\sigma_2+f_{\sigma_2}$ for every $\sigma_1,\sigma_2\in\Sp$ and that 
\begin{equation}\label{pseudo}
f_{\sigma}(w_1+w_2)-f_{\sigma}(w_1)-f_{\sigma}(w_2)=\mathcal{B}(\sigma(w_1),\sigma(w_2))-\mathcal{B}(w_1,w_2)
\end{equation}
for every $\sigma\in\Sp$ and $w_1,w_2\in W$.


\subsubsection*{Symplectic realizations of forms}
\noindent
We introduce some applications, similar to those in 33 of \cite{Wei}, with values in $\Sp$ and we give some relations between them. When comparing our calculations with those of sections 6 and 7 of \cite{Wei} it shall be remarked that we change most of the definitions because we consider matrices acting on the left rather than on the right, to uniform notation to the contemporary standard. This affects also the formulas that explicit the relations between these applications.

\begin{defin}\label{applSp}
We define the following maps.
\begin{itemize}
		\item An injective group homomorphism from $\Aut(X)$ to $\Sp$:
\begin{equation*}
\begin{array}{rcl}
d:\Aut(X)&\longrightarrow &\Sp\\
\alpha&\longmapsto &\begin{pmatrix}\alpha&0\\0&\alpha^{*-1}\end{pmatrix}.
\end{array}
\end{equation*}
		\item An injective map from $\Isom(X^*,X)$ to $\Sp$ where $\Isom(X^*,X)$ is the set of isomorphisms from $X^*$ to $X$:
\begin{equation*}
\begin{array}{rcl}
d':\Isom(X^*,X)&\longrightarrow &\Sp\\
\beta&\longmapsto &\begin{pmatrix}0&\beta\\-\beta^{*-1}&0\end{pmatrix}.
\end{array}
\end{equation*}
We remark that $d'(\beta)^{-1}=d'(-\beta^*)$ for every $\beta\in\Isom(X^*,X)$.
		\item An injective group homomorphism from $Q(X)$ to $\Sp$:
\begin{equation*}
\begin{array}{rcl}
t:Q(X)&\longrightarrow &\Sp\\
f&\longmapsto &\begin{pmatrix}1&0\\-\rho&1\end{pmatrix}
\end{array}
\end{equation*}
where $\rho=\rho(f)$ is the symmetric homomorphism associated to $f$.
		\item An injective group homomorphism from $Q(X^*)$ to $\Sp$:
\begin{equation*}
\begin{array}{rcl}
t':Q(X^*)&\longrightarrow &\Sp\\
f'&\longmapsto &\begin{pmatrix}1&-\rho'\\0&1\end{pmatrix}
\end{array}
\end{equation*}
where $\rho'=\rho(f')$ is the symmetric homomorphism associated to $f'$.
\end{itemize}
\end{defin}

\bigskip
\noindent Let $G$ be either $X$ or $X^*$. If $f\in Q(G)$ and $\alpha\in \Aut(G)$ we write $f^{\alpha}$ for $f\circ\alpha$.

\begin{prop}\label{relazSp}\mbox{}
\begin{enumerate}
	\item[(i)] Let $f\in Q(X)$, $f'\in Q(X^*)$ and $\alpha\in \Aut(X)$. Then 
	$d(\alpha)^{-1}t(f)d(\alpha)=t(f^{\alpha})$ and $d(\alpha)t'(f')d(\alpha)^{-1}=t'(f'^{\alpha^*}).$
	\item[(ii)] Let $\alpha\in \Aut(X)$, $\beta\in \Isom(X^*,X)$. Then 
	$d'(\alpha\beta)=d(\alpha)d'(\beta)$ and $d'(\beta\alpha^{*-1})=d'(\beta)d(\alpha).$
\end{enumerate}
\end{prop}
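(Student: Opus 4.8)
The plan is to verify each of the four identities in Proposition \ref{relazSp} by direct matrix computation in $\Sp$, using the block-matrix descriptions from Definition \ref{applSp} together with the elementary rules for transposes of block maps recorded just before the definition. Since everything in sight is $F$-linear and the composition law in $\Sp$ is ordinary matrix multiplication of the $2\times 2$ block matrices, each identity reduces to checking that two explicit block matrices agree entry by entry; the only non-formal ingredient is the compatibility between $\rho(f\circ\alpha)$ and $\alpha^*\rho(f)\alpha$, which I treat first.

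First I would record the key fact about quadratic forms: if $f\in Q(X)$ and $\alpha\in\Aut(X)$, then $\rho(f^{\alpha})=\alpha^*\rho(f)\alpha$. This is immediate from the defining formula $\rho(g)(x)(y)=g(x+y)-g(x)-g(y)$, since $f^\alpha(x+y)-f^\alpha(x)-f^\alpha(y)=\rho(f)(\alpha x)(\alpha y)=[\alpha y,\rho(f)(\alpha x)]=[y,\alpha^*\rho(f)(\alpha x)]$, so that $\rho(f^\alpha)=\alpha^*\rho(f)\alpha$; note this is manifestly symmetric, as it must be. The analogous statement on $X^*$ reads $\rho(f'^{\alpha^*})=\alpha\,\rho(f')\,\alpha^*$ after the identification $(X^*)^*=X$, using that the transpose of $\alpha^*:X^*\to X^*$ is $\alpha:X\to X$.

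Next I would carry out the four computations. For (i), first identity: $d(\alpha)^{-1}=\begin{pmatrix}\alpha^{-1}&0\\0&\alpha^*\end{pmatrix}$, so
\[
d(\alpha)^{-1}t(f)d(\alpha)=\begin{pmatrix}\alpha^{-1}&0\\0&\alpha^*\end{pmatrix}\begin{pmatrix}1&0\\-\rho&1\end{pmatrix}\begin{pmatrix}\alpha&0\\0&\alpha^{*-1}\end{pmatrix}=\begin{pmatrix}1&0\\-\alpha^*\rho\alpha&1\end{pmatrix}=t(f^\alpha),
\]
using the lemma of the previous paragraph. The second identity of (i) is the same computation on $X^*$: $d(\alpha)t'(f')d(\alpha)^{-1}=\begin{pmatrix}1&-\alpha\rho'\alpha^*\\0&1\end{pmatrix}=t'(f'^{\alpha^*})$. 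For (ii), first identity: $d(\alpha)d'(\beta)=\begin{pmatrix}\alpha&0\\0&\alpha^{*-1}\end{pmatrix}\begin{pmatrix}0&\beta\\-\beta^{*-1}&0\end{pmatrix}=\begin{pmatrix}0&\alpha\beta\\-\alpha^{*-1}\beta^{*-1}&0\end{pmatrix}$, and since $(\alpha\beta)^{*-1}=\beta^{*-1}\alpha^{*-1}$ — wait, one must be careful: $(\alpha\beta)^*=\beta^*\alpha^*$ as a map $X^*\to X^*$, hence $(\alpha\beta)^{*-1}=\alpha^{*-1}\beta^{*-1}$, so the lower-left block is $-(\alpha\beta)^{*-1}$ and the matrix equals $d'(\alpha\beta)$. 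The second identity of (ii) is entirely parallel: $d'(\beta)d(\alpha)=\begin{pmatrix}0&\beta\alpha^{*-1}\\-\beta^{*-1}\alpha&0\end{pmatrix}$, and since $(\beta\alpha^{*-1})^{*-1}=\alpha^{*-1\,*}{}^{-1}\beta^{*-1}=\beta^{*-1}\alpha$ (using $(\alpha^{*-1})^*=\alpha^{-1}$, hence its inverse is $\alpha$), this equals $d'(\beta\alpha^{*-1})$.

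The main obstacle, such as it is, is purely bookkeeping: keeping straight the direction of the various transposes and inverses — in particular that $(\alpha\beta)^*=\beta^*\alpha^*$ reverses order while inversion then reverses it back, and that the identification $(X^*)^*=X$ is being used silently in the $t'$ and $d'$ computations. One should also note in passing that each right-hand side does land in the claimed domain: $\alpha\beta$ and $\beta\alpha^{*-1}$ are genuine isomorphisms $X^*\to X$ since $\alpha,\alpha^{*-1}$ are automorphisms and $\beta$ is an isomorphism, so $d'$ is applicable; and $f^\alpha\in Q(X)$, $f'^{\alpha^*}\in Q(X^*)$ so $t,t'$ are applicable. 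No analytic input (modules, Haar measures, Fourier theory) is needed for this proposition — it is a statement internal to $\Sp$ and the quadratic-form formalism of Section 1.
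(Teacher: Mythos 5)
Your proof is correct and takes essentially the same route as the paper: a direct block-matrix computation in $\Sp$ combined with the compatibility $\rho(f^{\alpha})=\alpha^{*}\rho(f)\alpha$ (which the paper leaves as \virg{easy to check} and you rightly verify, together with its analogue on $X^*$). The only blemish is a harmless notational slip in the last verification, where your intermediate expression for $(\beta\alpha^{*-1})^{*-1}$ has the factors written in the wrong order before you land on the correct $\beta^{*-1}\alpha$.
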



\begin{proof}\mbox{}
\begin{enumerate}
	\item[(i)] We have
$d(\alpha)^{-1}t(f)d(\alpha)=\begin{pmatrix}\alpha^{-1}&0\\0&\alpha^{*}\end{pmatrix}\begin{pmatrix}1&0\\-\rho&1\end{pmatrix}\begin{pmatrix}\alpha&0\\0&\alpha^{*-1}\end{pmatrix}=\begin{pmatrix}1&0\\ -\alpha^{*}\rho\alpha&1\end{pmatrix}.$
It is easy to check that the symmetric homomorphism associated to $f^{\alpha}$ is $-\alpha^{*}\rho\alpha$. With similar explicit calculations the second equality can be proven as well.
	\item[(ii)] We have 
	$d(\alpha)d'(\beta)=\begin{pmatrix}\alpha&0\\0&\alpha^{*-1}\end{pmatrix}\begin{pmatrix}0&\beta\\ -\beta^{*-1}&0\end{pmatrix}= \begin{pmatrix}0&\alpha\beta\\-\alpha^{*-1}\beta^{*-1}&0\end{pmatrix}=d'(\alpha\beta),
$
and
$d'(\beta)d(\alpha)=\begin{pmatrix}0&\beta\\ -\beta^{*-1}&0\end{pmatrix}\begin{pmatrix}\alpha&0\\0&\alpha^{*-1}\end{pmatrix}
=\begin{pmatrix}0&\beta\alpha^{*-1}\\-(\beta\alpha^{*-1})^{*-1}&0\end{pmatrix}=d'(\beta\alpha^{*-1}).$\qedhere
\end{enumerate}
\end{proof}

\noindent
We have $d(\alpha)d'(\beta)d(\alpha)^{-1}=d'(\alpha\circ\beta\circ\alpha^{*})$ so that the group $d(\Aut(X))$ acts on the set $d'(\Isom(X^*, X))$ by conjugacy in $\Sp$.


\subsubsection*{A set of generators for the symplectic group}
Let us provide a description of $\Sp$ by generators and relations. We denote by $\Omega(W)$ the subset of $\Sp$ of elements $\sigma=\begin{pmatrix}\alpha&\beta\\ \gamma&\delta\end{pmatrix}$ such that $\beta$ is an isomorphism. The set $\Omega(W)$ is a set of generators for $\Sp$ (cf. 42 of \cite{Wei}). The precise statement is as follows.

\begin{prop}\label{lemma6}
The group $\Sp$ is generated by the elements of $\Omega(W)$ with relations $\sigma\sigma'=\sigma''$ for every $\sigma,\sigma',\sigma''\in\Omega(W)$ such that the equality $\sigma\sigma'=\sigma''$ holds in $\Sp$. 
\end{prop}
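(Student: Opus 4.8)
The plan is to reduce the statement to two assertions: (a) every element of $\Sp$ is a product of elements of $\Omega(W)$, and (b) the only relations needed are the ``obvious'' ones coming from triples in $\Omega(W)$ whose product lies again in $\Omega(W)$. For (a) I would argue as follows. Take $\sigma=\begin{pmatrix}\alpha&\beta\\ \gamma&\delta\end{pmatrix}\in\Sp$. If $\beta$ is already an isomorphism we are done. Otherwise, pick an element $\tau\in\Omega(W)$ — for instance of the form $d'(\beta_0)$ with $\beta_0\in\Isom(X^*,X)$, or more generally a suitable $\sigma_0\in\Omega(W)$ — and show that the upper-right block of $\sigma\tau$ (respectively $\tau\sigma$) can be made invertible by a good choice of $\tau$. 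The block of $\sigma\tau$ in position $(1,2)$ is $\alpha\beta_0'+\beta\delta_0'$ for $\tau=\begin{pmatrix}\alpha_0&\beta_0\\ \gamma_0&\delta_0\end{pmatrix}$ with inverse blocks primed; choosing $\tau=d'(\beta_0)$ gives upper-right block $\alpha\beta_0$ when $\gamma=0$, and in general one uses the fact that $F$ is infinite to pick parameters making the relevant determinant nonzero (a Zariski-density / avoiding-a-proper-subvariety argument over the infinite field $F$). Thus $\sigma=(\sigma\tau)\tau^{-1}$ expresses $\sigma$ as a product of two elements of $\Omega(W)$, since $\tau,\tau^{-1}\in\Omega(W)$ as well (note $d'(\beta)^{-1}=d'(-\beta^*)$, recorded after Definition \ref{applSp}). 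So in fact every element of $\Sp$ is a product of \emph{two} generators.

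For (b), let $\widetilde{\Sp}$ be the abstract group presented by generators $[\sigma]$, one for each $\sigma\in\Omega(W)$, subject to the relations $[\sigma][\sigma']=[\sigma'']$ whenever $\sigma\sigma'=\sigma''$ in $\Sp$ with all three in $\Omega(W)$. There is an obvious surjective homomorphism $\pi\colon\widetilde{\Sp}\to\Sp$ sending $[\sigma]\mapsto\sigma$; surjectivity is (a). I must show $\pi$ is injective. By (a), every element of $\widetilde{\Sp}$ can be written as $[\sigma_1][\sigma_2]$ for suitable $\sigma_1,\sigma_2\in\Omega(W)$ — this requires first checking that the relations let us collapse any word to length $\le 2$: given a length-$3$ subword $[\sigma][\sigma'][\sigma'']$, one inserts an element $\tau\in\Omega(W)$ with $\sigma\sigma'\tau^{-1}\in\Omega(W)$ and $\tau\in\Omega(W)$, rewriting $[\sigma][\sigma']=[\sigma\sigma'\tau^{-1}][\tau]$ via two instances of the defining relations (each instance being legitimate because all four matrices involved lie in $\Omega(W)$), hence shortening. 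Once every element of $\widetilde{\Sp}$ has the form $[\sigma_1][\sigma_2]$, suppose $\pi([\sigma_1][\sigma_2])=\sigma_1\sigma_2=1$, i.e. $\sigma_2=\sigma_1^{-1}$. Then I want $[\sigma_1][\sigma_1^{-1}]=1$ in $\widetilde{\Sp}$. Choose $\tau\in\Omega(W)$ with $\sigma_1\tau\in\Omega(W)$ and $\tau^{-1}\sigma_1^{-1}=(\sigma_1\tau)^{-1}\in\Omega(W)$; such $\tau$ exists by the infinite-field genericity argument. Then the relations give $[\sigma_1][\sigma_1^{-1}]=[\sigma_1][\tau][\tau^{-1}][\sigma_1^{-1}] = [\sigma_1\tau][\tau^{-1}\sigma_1^{-1}]=[\sigma_1\tau][(\sigma_1\tau)^{-1}]$, so it suffices to handle the case where $\sigma_1$ itself is chosen so that $[\sigma_1\tau][(\sigma_1\tau)^{-1}]$ is manifestly trivial; picking $\tau$ so that additionally $\sigma_1\tau=\sigma_1^{-1}\cdot(\text{something in }\Omega(W))$ reduces us to a single relation of the form $[\mu][\mu^{-1}]=[\,1\,]$, and $[1]=1$ follows from $[1][1]=[1]$. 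Filling in this normalization is the one genuinely fiddly point.

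The main obstacle, as the introduction already flags, is precisely that the ``Segal-type'' argument and Lemma 2 of \cite{Wei} are not available in full generality; here the substitute is entirely group-theoretic and the real work is the bookkeeping in (b) — verifying that every word can be shortened to length two \emph{using only the stated relations}, and that two length-two expressions of the same element are related by the stated relations. Concretely, the crux is a \textbf{Bruhat-type} normal form: one shows that $\Omega(W)\cdot\Omega(W)=\Sp$ and that any two factorizations $\sigma_1\sigma_2=\sigma_1'\sigma_2'$ (all factors in $\Omega(W)$) differ by inserting $\tau,\tau^{-1}$ with $\tau,\sigma_1\tau,\tau^{-1}\sigma_2\in\Omega(W)$, which is again an open-dense-condition argument over the infinite field $F$. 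I would organize the write-up so that the genericity lemma (''for any finitely many $\sigma_i\in\Sp$ there is $\tau\in\Omega(W)$ with all $\sigma_i\tau\in\Omega(W)$'') is proved once at the start, since it is used repeatedly; everything else is then a short diagram chase through the relations. I expect no difficulty beyond careful case analysis, and the characteristic-of-$R$ hypothesis plays no role here — this proposition is about $\Sp$ over $F$ only.
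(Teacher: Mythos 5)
Your overall architecture for this statement --- introduce the abstractly presented group $\widetilde{\Sp}$, prove $\Sp=\Omega(W)\cdot\Omega(W)$, and use a genericity lemma over the infinite field $F$ to force finitely many products into $\Omega(W)$ --- is sound, and it is essentially the strategy of no.~42 of \cite{Wei}; note the paper itself gives no proof of Proposition~\ref{lemma6} but simply cites that reference. The genuine gap is in the endgame of your step (b). The manipulation $[\sigma_1][\sigma_1^{-1}]=[\sigma_1][\tau][\tau^{-1}][\sigma_1^{-1}]$ presupposes $[\tau][\tau^{-1}]=1$ in $\widetilde{\Sp}$, which is exactly what you are trying to prove: $\tau\tau^{-1}=1$ is \emph{not} a defining relation, because $1\notin\Omega(W)$ (its upper-right block is $0$). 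For the same reason there is no generator $[1]$, hence no relation $[\mu][\mu^{-1}]=[1]$ and no $[1][1]=[1]$; the proposed terminal step is vacuous, and the preceding reduction merely replaces $\sigma_1$ by $\sigma_1\tau$, so the induction has no base case. A related omission: your length-reduction handles positive words, but a general element of $\widetilde{\Sp}$ involves inverses of generators, and rewriting $[\sigma]^{-1}$ as a positive word needs $[\sigma]^{-1}=[\sigma^{-1}]$ --- the same unproved identity. (Minor: in (a), the choice $\tau=d'(\beta_0)$ only makes the upper-right block of $\sigma\tau$ invertible when $\alpha$ is; the general case really does rest on the genericity lemma, which uses Zariski density of $F$-points in the irreducible group $\Sp$, not just that $F$ is infinite.)

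The repair is to avoid proving $[\sigma][\sigma^{-1}]=1$ head-on. Define $\Phi\colon\Sp\to\widetilde{\Sp}$ by $\Phi(g)=[\sigma_1][\sigma_2]$ for any factorization $g=\sigma_1\sigma_2$ with $\sigma_1,\sigma_2\in\Omega(W)$. If $g=\sigma_1\sigma_2=\sigma_1'\sigma_2'$, choose $\tau\in\Omega(W)$ with $\sigma_2\tau$, $\sigma_2'\tau$ and $g\tau$ all in $\Omega(W)$; then, using only legitimate instances of the defining relations (and $\tau^{-1}\in\Omega(W)$, since the upper-right block of an inverse is $-\beta^{*}$), $[\sigma_1][\sigma_2]=[\sigma_1][\sigma_2\tau][\tau^{-1}]=[g\tau][\tau^{-1}]$, and likewise for the primed pair, so $\Phi$ is well defined. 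The same three-condition trick gives $[\sigma_a][\sigma_b][\sigma_c]=[\sigma_a\sigma_b\tau][\tau^{-1}\sigma_c]$, whence $\Phi$ is multiplicative; and $\Phi(\sigma)=[\sigma]$ for $\sigma\in\Omega(W)$ is literally a defining relation. Then $\Phi\circ\pi$ is an endomorphism of $\widetilde{\Sp}$ fixing every generator, hence the identity, so $\pi$ is injective; the identities $[\sigma]^{-1}=[\sigma^{-1}]$ and $[\sigma][\sigma^{-1}]=1$ then come out as corollaries rather than being inputs.
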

\noindent
Weil states also the following fact about the set $\Omega(W)$ (cf. formula (33) of \cite{Wei}). 

\begin{prop}\label{omegaSp}
Every element $\sigma\in\Omega(W)$ can be written as $\sigma=t(f_1)d'(\beta')t(f_2)$
for unique $f_1,f_2\in Q(X)$ and $\beta'\in\Isom(X^*,X)$.
\end{prop}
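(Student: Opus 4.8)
The plan is to produce the decomposition explicitly by matrix multiplication and then check uniqueness by reading off the blocks. Write $\sigma=\begin{pmatrix}\alpha&\beta\\\gamma&\delta\end{pmatrix}\in\Omega(W)$, so $\beta\in\Isom(X^*,X)$ by definition of $\Omega(W)$. I would compute the product $t(f_1)d'(\beta')t(f_2)$ for unknowns $f_1,f_2\in Q(X)$, $\beta'\in\Isom(X^*,X)$. Using Definition \ref{applSp}, $t(f_i)=\begin{pmatrix}1&0\\-\rho_i&1\end{pmatrix}$ with $\rho_i=\rho(f_i)$ symmetric, and $d'(\beta')=\begin{pmatrix}0&\beta'\\-\beta'^{*-1}&0\end{pmatrix}$. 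Multiplying out,
\begin{equation*}
t(f_1)d'(\beta')t(f_2)=\begin{pmatrix}1&0\\-\rho_1&1\end{pmatrix}\begin{pmatrix}0&\beta'\\-\beta'^{*-1}&0\end{pmatrix}\begin{pmatrix}1&0\\-\rho_2&1\end{pmatrix}=\begin{pmatrix}-\beta'\rho_2&\beta'\\-\beta'^{*-1}-\rho_1\beta'\rho_2&-\rho_1\beta'\end{pmatrix}.
\end{equation*}
Comparing the upper-right block forces $\beta'=\beta$, which is an isomorphism as required. Comparing the upper-left block gives $-\beta\rho_2=\alpha$, hence $\rho_2=-\beta^{-1}\alpha$, and comparing the lower-right block gives $-\rho_1\beta=\gamma$, hence $\rho_1=-\gamma\beta^{-1}$. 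These determine $f_1,f_2$ uniquely, so uniqueness is immediate; what must be verified is existence, i.e. that the $\rho_i$ so defined are actually symmetric (so that they come from quadratic forms on $X$ via the isomorphism $Q(X)\xrightarrow{\sim}\{\text{symmetric homs}\}$ recalled after Definition \ref{defB}), and that the remaining lower-left block identity $-\beta^{*-1}-\rho_1\beta\rho_2=\gamma$ is automatically satisfied.

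For symmetry I would invoke the relations recorded just before the definition of $f_\sigma$: for $\sigma\in\Sp$ one has $\alpha^*\gamma=\gamma^*\alpha$ and the identity $\delta^*\alpha-\beta^*\gamma=1$ (equivalently $\alpha^*\delta-\gamma^*\beta=1$). From $\rho_1=-\gamma\beta^{-1}$ we get $\rho_1^*=-\beta^{*-1}\gamma^*$, and symmetry $\rho_1=\rho_1^*$ amounts to $\gamma\beta^{-1}=\beta^{*-1}\gamma^*$, i.e. $\beta^*\gamma=\gamma^*\beta$; but $\beta^*\gamma=\gamma^*\beta$ is exactly the statement that $\beta^*\delta$... — more directly, transposing $\sigma^I\sigma=1$ and using that $\sigma\in\Sp$ iff $\sigma^I\sigma=1$, the block $\beta^*\gamma$ being symmetric is one of the standard symplectic relations (the analogue for the lower blocks of $\alpha^*\gamma=\gamma^*\alpha$), applied to $\sigma$ or to $\sigma^I$. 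Similarly $\rho_2=-\beta^{-1}\alpha$ is symmetric because $\alpha^*\beta^{*-1}$... I would extract the precise relation $\alpha^*(\beta^{-1})=(\beta^{-1})^*\alpha^*$—equivalently $\beta$ conjugates the symmetric condition correctly—from the two displayed symplectic identities together with $\alpha^*\gamma=\gamma^*\alpha$ and invertibility of $\beta$. The lower-left block identity then follows by plugging $\rho_1=-\gamma\beta^{-1}$, $\rho_2=-\beta^{-1}\alpha$ into $-\beta^{*-1}-\rho_1\beta\rho_2$ and simplifying to $-\beta^{*-1}+\gamma\beta^{-1}\alpha$, which must equal $\delta$; this is precisely the symplectic relation $\delta^*\alpha-\beta^*\gamma=1$ transposed and rearranged, i.e. $\delta=-\beta^{*-1}+\gamma\beta^{-1}\alpha$ after multiplying $\delta^*\alpha-\beta^*\gamma=1$ on the appropriate sides by $\beta^{*-1}$ and $\beta^{-1}$.

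The only genuine obstacle is bookkeeping: making sure each symmetry and each block identity is matched to the correct one of the four symplectic relations listed before $f_\sigma$, being careful with transposes and with the left- versus right-action convention adopted in this paper (which, as the authors warn, changes the shape of these matrices relative to \cite{Wei}). Concretely I expect the argument to reduce, after the explicit multiplication above, to: (1) $\rho_1=-\gamma\beta^{-1}$ symmetric $\Leftrightarrow$ $\gamma^*\beta=\beta^*\gamma$; (2) $\rho_2=-\beta^{-1}\alpha$ symmetric $\Leftrightarrow$ $\alpha^*\beta^{*-1}=\beta^{-1}\alpha$ suitably transposed; (3) the lower-left block $\Leftrightarrow$ $\alpha^*\delta-\gamma^*\beta=1$ — all three being consequences of $\sigma^I\sigma=1$. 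Once these are pinned down the proposition follows.
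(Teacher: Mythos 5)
Your overall strategy --- multiply $t(f_1)d'(\beta')t(f_2)$ out as block matrices and read off the blocks --- is the right one, and it is exactly what the paper records in Remark \ref{dimOmegaSp} (the proposition itself is only quoted from Weil, so the explicit decomposition in that remark is the substance of the proof). But your execution contains errors that make key steps fail as written. First, the multiplication itself: the lower-left block of $t(f_1)d'(\beta')t(f_2)$ is $\rho_1\beta'\rho_2-\beta'^{*-1}$, not $-\beta'^{*-1}-\rho_1\beta'\rho_2$. Second, and more seriously, you compare $-\rho_1\beta'$ with the wrong block of $\sigma$: the lower-right block of $\sigma$ is $\delta$, not $\gamma$, so the correct identification is $\rho_1=-\delta\beta^{-1}$ together with $\rho_2=-\beta^{-1}\alpha$, as in Remark \ref{dimOmegaSp}. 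Your $\rho_1=-\gamma\beta^{-1}$ is not even a well-defined map $X\to X^*$ (since $\beta^{-1}$ takes values in $X^*$ while $\gamma$ is defined on $X$), and the symmetry criterion you then rely on, $\beta^*\gamma=\gamma^*\beta$, is simply false for a general element of $\Omega(W)$: for $\sigma=t'(f')t(f)$ with $\rho(f')$ invertible one gets $\beta^*\gamma=\rho(f')\rho(f)$ and $\gamma^*\beta=\rho(f)\rho(f')$, which differ whenever the two symmetric maps do not commute.

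With the correct identification the argument does close up, but two points you left vague must be pinned down. Symmetry of $\rho_1=-\delta\beta^{-1}$ is equivalent to $\beta^*\delta=\delta^*\beta$, which is one of the four relations listed before the definition of $f_\sigma$; symmetry of $\rho_2=-\beta^{-1}\alpha$ is equivalent to $\alpha\beta^*=\beta\alpha^*$, which is \emph{not} among those four (they come from $\sigma^I\sigma=1$) but follows from the complementary identity $\sigma\sigma^I=1$, valid because $\sigma$ is invertible --- this is precisely the step your sketch trails off on. Finally, the lower-left block condition $\gamma=\delta\beta^{-1}\alpha-\beta^{*-1}$ is verified by multiplying on the left by $\beta^*$ and using $\beta^*\delta=\delta^*\beta$, which turns it into the listed relation $\delta^*\alpha-\beta^*\gamma=1$. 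Your uniqueness argument (reading $\beta'$, $\rho_1$, $\rho_2$ off the blocks, then using that $f\mapsto\rho(f)$ is bijective since $\mathrm{char}(F)\neq 2$) is fine as stated.
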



\begin{rmk}\label{dimOmegaSp}
\noindent
Let $\begin{pmatrix}\alpha&\beta\\ \gamma&\delta\end{pmatrix}\in\Omega(W)$. Then $\sigma=t(f_1)d'(\beta)t(f_2)$ where $f_1$ and $f_2$ are the quadratic forms associated to the symmetric homomorphisms $-\delta\beta^{-1}$ and $-\beta^{-1}\alpha$. In particular we have the formula
$$\begin{pmatrix}\alpha&\beta\\ \gamma&\delta\end{pmatrix}=\begin{pmatrix}1&0\\ \delta\beta^{-1}&1\end{pmatrix}\begin{pmatrix}0&\beta\\ -\beta^{*-1}&0\end{pmatrix}\begin{pmatrix}1&0\\ \beta^{-1}\alpha&1\end{pmatrix}.$$
\end{rmk}


\section{The metaplectic group}

Following Weil's strategy we define the metaplectic group, attached to $R$ and $\chi$, as a central extension of the symplectic group by $\Rx$. To do so, we shall construct the groups $\B$ and $\BB$. In particular, in Theorem \ref{ses1} we characterize $\BB$ as central extension of  $\B$ by $R^\times$. This characterization permits to define the metaplectic group as fiber product over $\B$ of the symplectic group and $\BB$ and to show that the metaplectic group is a central extension of the symplectic group by $\Rx$.\\ The main issue related to this group, rather than its formal definition, is to study the maps $\mu: \Sp\to \B$ and $\pi_0: \BB \to \B$, that depend both on $R$.


\subsection{The group $B_0(W)$}\label{defB0}
\noindent
Let $A(W)$ be the group whose underlying set is $W\times \Rx$ with the multiplication law
$$(w_1,t_1)(w_2,t_2)=(w_1+w_2,t_1t_2\F(w_1,w_2))$$
where $\F$ is as in Definition \ref{defB}. Its center is $Z=Z(A(W))=\{(0,t), t\in \Rx\}\cong\Rx$. 

\vspace{0.25cm}
\noindent We denote by $B_0(W)$ the subgroup of $\Aut(A(W))$ of group automorphisms of $A(W)$ acting trivially on $Z$, i.e. $B_0(W)=\{s\in\Aut(A(W))\;|\; s_{|Z}=\mathrm{id}_Z\}.$

\begin{prop}\label{propB0}
Let $s\in B_0(W)$. Then there exists a unique pair $(\sigma,\varphi)\in\Sp\times X_2(W)$ satisfying the property
\begin{equation}\label{condB0}
\varphi(w_1+w_2)\varphi(w_1)^{-1}\varphi(w_2)^{-1}=\F\big(\sigma(w_1),\sigma(w_2)\big)\F(w_1,w_2)^{-1}
\end{equation}
such that $s(w,t)=(\sigma(w),\varphi(w)t)$ for every $w\in W$ and $t\in\Rx$. 
Conversely if the pair $(\sigma,\varphi)\in\Sp\times X_2(W)$ satisfies (\ref{condB0}), then $(w,t) \mapsto (\sigma(w),\varphi(w)t)$ defines an element of $B_0(W)$.
\end{prop}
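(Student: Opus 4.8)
The plan is to prove the proposition by direct computation, analysing what the condition $s\in\B$ forces on the structure of $s$. First I would observe that since $A(W)$ has underlying set $W\times\Rx$ and $s$ fixes the center $Z=\{(0,t)\}$ pointwise, I can write $s(w,t)=(\sigma_0(w,t),\psi(w,t))$ for suitable maps $\sigma_0\colon W\times\Rx\to W$ and $\psi\colon W\times\Rx\to\Rx$. Applying $s$ to $(w,1)(0,t)=(w,t)$ and using $s|_Z=\mathrm{id}_Z$, one gets $\sigma_0(w,t)=\sigma_0(w,1)=:\sigma(w)$ (independent of $t$) and $\psi(w,t)=\psi(w,1)\cdot t$; set $\varphi(w):=\psi(w,1)$, so $s(w,t)=(\sigma(w),\varphi(w)t)$. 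This already pins down the shape; it remains to extract the properties of $\sigma$ and $\varphi$ from the requirement that $s$ be a group automorphism.

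Next I would impose that $s$ is multiplicative. Comparing $s\big((w_1,t_1)(w_2,t_2)\big)$ with $s(w_1,t_1)s(w_2,t_2)$ and using the multiplication law of $A(W)$, the $W$-components give $\sigma(w_1+w_2)=\sigma(w_1)+\sigma(w_2)$, so $\sigma$ is a group endomorphism of $W$, and the $\Rx$-components give exactly the relation
\begin{equation*}
\varphi(w_1+w_2)\,t_1t_2\,\F(w_1,w_2)=\varphi(w_1)t_1\,\varphi(w_2)t_2\,\F\big(\sigma(w_1),\sigma(w_2)\big),
\end{equation*}
which after cancelling $t_1t_2$ is precisely (\ref{condB0}). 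Then I would check that $\sigma$ is bijective: since $s$ is an automorphism it has an inverse $s^{-1}\in\B$, which by the same reasoning has the form $(w,t)\mapsto(\tau(w),\varphi'(w)t)$, and $s\circ s^{-1}=\mathrm{id}$ forces $\sigma\circ\tau=\mathrm{id}_W$, similarly $\tau\circ\sigma=\mathrm{id}_W$; hence $\sigma\in\Aut(W)$ as a group automorphism. That (\ref{condB0}) holds means in particular that $\F(\sigma(w_1),\sigma(w_2))\F(w_1,w_2)^{-1}$ is symmetric in the sense that the left side of (\ref{condB0}) is symmetric under swapping $w_1,w_2$ (it equals $\varphi(w_1+w_2)\varphi(w_1)^{-1}\varphi(w_2)^{-1}$); comparing the version with $w_1,w_2$ swapped and dividing gives $\F(\sigma w_1,\sigma w_2)\F(\sigma w_2,\sigma w_1)^{-1}=\F(w_1,w_2)\F(w_2,w_1)^{-1}$, which by the characterization recalled before Proposition \ref{simplFlineare} says exactly $\sigma\in\Sp$ (in particular, by Proposition \ref{simplFlineare}, $\sigma$ is automatically $F$-linear). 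Finally, that $\varphi\in X_2(W)$: the map $(w_1,w_2)\mapsto\varphi(w_1+w_2)\varphi(w_1)^{-1}\varphi(w_2)^{-1}$ equals $\F(\sigma w_1,\sigma w_2)\F(w_1,w_2)^{-1}=\chi\big(\mathcal B(\sigma w_1,\sigma w_2)-\mathcal B(w_1,w_2)\big)$, which is a smooth character in each variable since $\mathcal B$ is bilinear, $\sigma$ is $F$-linear and $\chi$ is a smooth character; hence $\varphi$ is a character of degree $2$.

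For uniqueness: if $s(w,t)=(\sigma(w),\varphi(w)t)=(\sigma'(w),\varphi'(w)t)$ for all $(w,t)$, reading off the two components gives $\sigma=\sigma'$ and $\varphi=\varphi'$ immediately, so the pair $(\sigma,\varphi)$ is determined by $s$.

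For the converse, given $(\sigma,\varphi)\in\Sp\times X_2(W)$ satisfying (\ref{condB0}), define $s(w,t)=(\sigma(w),\varphi(w)t)$. One checks directly that $s$ is multiplicative — this is the same computation as above run backwards, using (\ref{condB0}) — and that $s$ is bijective with inverse $(w,t)\mapsto(\sigma^{-1}(w),\varphi(\sigma^{-1}w)^{-1}t)$, so $s\in\Aut(A(W))$; and clearly $s(0,t)=(0,t)$, so $s\in\B$. The main obstacle, such as it is, is bookkeeping rather than depth: one must be careful that $\sigma$ is a priori only a group automorphism and invoke Proposition \ref{simplFlineare} to get $F$-linearity, and one must correctly translate the functional equation (\ref{condB0}) into both the symplectic condition on $\sigma$ and the degree-$2$ character condition on $\varphi$; no genuine analytic input is needed beyond smoothness of $\chi$ and bilinearity of $\mathcal B$.
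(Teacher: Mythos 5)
Your proof is correct and follows essentially the same route as the paper: decompose $s$ into components, use $s|_Z=\mathrm{id}_Z$ and multiplicativity to get the form $(w,t)\mapsto(\sigma(w),\varphi(w)t)$ together with (\ref{condB0}), deduce the symplectic property from the symmetry of the left-hand side and invoke Proposition \ref{simplFlineare} for $F$-linearity, read off the degree-$2$ character condition from the right-hand side being a bicharacter, and verify the converse directly. The only cosmetic differences are that you derive the shape of $s$ from the identity $(w,1)(0,t)=(w,t)$ instead of specializing the general multiplicativity relation, and that you spell out the (trivial) uniqueness, which the paper leaves implicit.
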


\begin{proof} 
Let $\eta:A(W)\longrightarrow W$ and $\theta:A(W)\longrightarrow \Rx$ such that $s(w,t)=(\eta(w,t),\theta(w,t))$. For every $w_1,w_2\in W$ and $t_1,t_2\in\Rx$ we have
\begin{align*}
s((w_1,t_1)(w_2,t_2))&=\big(\eta(w_1+w_2,t_1t_2\F(w_1,w_2)),\theta(w_1+w_2,t_1t_2\F(w_1,w_2))\big)\\
s(w_1,t_1)s(w_2,t_2) &=\big(\eta(w_1,t_1)+\eta(w_2,t_2),\theta(w_1,t_1)\theta(w_2,t_2)\F(\eta(w_1,t_1),\eta(w_2,t_2))\big).
\end{align*}
Since $s$ is a homomorphism then $\eta$ is so and since $s_{|Z}=\mathrm{id}_Z$ then $\eta(0,t)=0$ for every $t\in \Rx$. These two facts imply that $\eta(w,t)=\eta(w,1)$ for every $t\in \Rx$ so that $\sigma$, defined by $\sigma(w)=\eta(w,1)$, is a group endomorphism of $W$. We have also
\begin{equation}\label{eq1}
\theta(w_1+w_2,t_1t_2\F(w_1,w_2))=\theta(w_1,t_1)\theta(w_2,t_2)\F(\sigma(w_1),\sigma(w_2)).
\end{equation}
Setting $w_2=0$ and $t_1=1$ and using the fact that $\theta(0,t)=t$ for every $t\in \Rx$ (since $s_{|Z}=\mathrm{id}_Z$) we obtain that $\theta(w_1,t_2)=\theta(w_1,1)t_2$ for every $w_1\in W$ and $t_2\in \Rx$. So, if we set $\varphi(w)=\theta(w,1)$, we obtain that $s(w,t)=(\sigma(w),\varphi(w)t)$ and (\ref{eq1}) becomes
\begin{equation*}
\varphi(w_1+w_2)t_1t_2\F(w_1,w_2)=\varphi(w_1)t_1\varphi(w_2)t_2\F(\sigma(w_1),\sigma(w_2))
\end{equation*}
that is exactly the condition (\ref{condB0}). Furthermore, if we take $\sigma'\in\End(W)$ and $\varphi':W\longrightarrow \Rx$ such that $s^{-1}(w,t)=(\sigma'(w),\varphi'(w)t)$, then $(w,t)=s(s^{-1}(w,t))=(\sigma(\sigma'(w)),\varphi(\sigma'(w))\varphi'(w)t)$ that implies that $\sigma$ is a group automorphism of $W$ with $\sigma^{-1}=\sigma'$.
Now, the left-hand side of (\ref{condB0}) is symmetric on $w_1$ and $w_2$, so $\sigma$ verify the symplectic property and by Proposition \ref{simplFlineare}, $\sigma\in\Sp$. Furthermore the right-hand side of (\ref{condB0}) is a bicharacter and so $\varphi$ is a character of degree 2 of $W$.\\
For the vice-versa, it is easy to check that $(w,t) \mapsto (\sigma(w),\varphi(w)t)$ is an endomorphism of $A(W)$ thanks to the property (\ref{condB0}), and that it is invertible with inverse $(w,t) \mapsto (\sigma^{-1}(w),(\varphi(\sigma^{-1}w))^{-1}t)$. Notice that it acts trivially on $Z$, so it is an element of $\B$.
\end{proof}

\noindent From now on, we identify an element $s\in\B$ with the corresponding pair $(\sigma,\varphi)$ such that $s(w,t)=(\sigma(w),\varphi(w)t)$.
If $s_1,s_2\in B_0(W)$ and $(\sigma_1,\varphi_1)$ and $(\sigma_2,\varphi_2)$ are their corresponding pairs, then the composition law of $\B$ becomes
$s_1\circ s_2=(\sigma_1,\varphi_1)(\sigma_2,\varphi_2)=(\sigma_1\circ \sigma_2,\varphi)$ where $\varphi$ is defined by $\varphi(w)=\varphi_2(w)\varphi_1(\sigma_2(w))$. We observe that the identity element is $(\mathrm{id},1)$ and the inverse of $(\sigma,\varphi)$ is $(\sigma^{-1},(\varphi\circ\sigma^{-1})^{-1})$.

\vspace{0,2cm}
\noindent
The projection $\pi':\B\longrightarrow \Sp$ defined by $\pi'(\sigma,\varphi)=\sigma$ is a group homomorphism whose kernel is $\{(\mathrm{id},\tau), \tau\in X_1(W)\}$. Furthermore, by (\ref{pseudo}) and (\ref{condB0}), we have an injective group homomorphism
\begin{equation}\label{defmu}
\begin{array}{rcl}
\mu:\Sp&\longrightarrow & \B\\
\sigma&\longmapsto & (\sigma,\chi\circ f_{\sigma})
\end{array}
\end{equation}
such that $\pi'\circ\mu$ is the identity of $\Sp$. This means that $\B$ is the semidirect product of $\{(\mathrm{id},\tau), \tau\in X_1(W)\}$ and $\mu(\Sp)$ and in particular, by Propositions \ref{lemma6} and \ref{omegaSp}, it is generated by $\mu(t(Q(X)))$, $\mu(d'(\Isom(X^*,X)))$ and $\{(\mathrm{id},\tau), \tau\in X_1(W)\}$.

\bigskip

\noindent
Let us define some applications with values in $\B$, similar to those in 6 of \cite{Wei}, composing those with values in $\Sp$ with $\mu$. We call them $d_0=\mu\circ d$, $d'_0=\mu\circ d'$, $t_0=\mu\circ t$ and $t'_0=\mu\circ t'$.


\subsection{The group $\BB$}\label{defBB}

We define $\A$ as the image of a faithful infinite dimensional representation of $A(W)$ over $R$ and $\BB$ as its normalizer in $\Aut(\S(X))$. Then we show that in fact $\BB$ is a central extension of $\B$ by $\Rx$.

\subsubsection{$\A$ and $\BB$}
\noindent
For every $w=(v,v^*)\in X\times X^*= W$ and every $t\in \Rx$, we denote by $U(w,t)$ the $R$-linear operator on $\S(X)$ defined by $$U(w,t)\Phi:x\mapsto t\Phi(x+v)\la x,v^*\ra$$ for every function $\Phi\in\S(X)$. It can be directly verified that $U(w,t)$ lies in $\Aut(\S(X))$ for every $w\in W$ and $t\in\Rx$. With a slight abuse of notation we write $U(w)=U(w,1)$ for every $w\in W$.

\smallskip

\noindent
Let $\A=\left\{U(w,t)\in\Aut(\S(X))\;|\;t\in\Rx, w\in W\right\}$. It is not hard to see that it is a subgroup of $\Aut(\S(X))$ and that its multiplication law is given by 
\begin{equation}\label{operazioneA}
U(w_1,t_1)U(w_2,t_2)=U(w_1+w_2,t_1t_2\F(w_1,w_2)).
\end{equation}

\begin{lemma}\label{isomA} The map
\begin{equation*}
\begin{array}{rcl}
U:A(W)&\longrightarrow &\A\\
(w,t)&\longmapsto &U(w,t).
\end{array}
\end{equation*}
is a group isomorphism. 
\end{lemma}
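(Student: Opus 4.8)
The plan is to verify directly that the map $U:A(W)\to\A$ defined by $(w,t)\mapsto U(w,t)$ is a group homomorphism which is both surjective and injective, since $\A$ is by construction the set $\{U(w,t)\mid t\in\Rx,\, w\in W\}$. Surjectivity is immediate from the definition of $\A$, so the real content is the homomorphism property and injectivity.

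For the homomorphism property, I would simply observe that the multiplication law \eqref{operazioneA}, namely $U(w_1,t_1)U(w_2,t_2)=U(w_1+w_2,t_1t_2\F(w_1,w_2))$, exactly matches the group law of $A(W)$ as defined in Section \ref{defB0}, i.e. $(w_1,t_1)(w_2,t_2)=(w_1+w_2,t_1t_2\F(w_1,w_2))$. Thus $U((w_1,t_1)(w_2,t_2))=U(w_1,t_1)U(w_2,t_2)$, so $U$ is a homomorphism. (One could also re-derive \eqref{operazioneA} by a short direct computation: applying $U(w_2,t_2)$ and then $U(w_1,t_1)$ to a Schwartz function $\Phi$ and using the bilinearity of $\mathcal B$ together with $\F=\chi\circ\mathcal B$ to collect the character factors, but since \eqref{operazioneA} is already stated, it suffices to invoke it.)

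For injectivity, suppose $U(w,t)=\mathrm{id}_{\S(X)}$ with $w=(v,v^*)$. Then $t\,\Phi(x+v)\la x,v^*\ra=\Phi(x)$ for every $\Phi\in\S(X)$ and every $x\in X$. Testing on the characteristic function $\Phi=\I_K$ of a small compact open subgroup $K$ of $X$: evaluating at $x=0$ forces $t\,\I_K(v)\la 0,v^*\ra = 1$, hence $v\in K$; letting $K$ shrink forces $v=0$. With $v=0$ the identity becomes $t\,\Phi(x)\la x,v^*\ra=\Phi(x)$ for all $\Phi$ and $x$, so $t\la x,v^*\ra=1$ for all $x\in X$; taking $x=0$ gives $t=1$, and then $\la x,v^*\ra=1$ for all $x$, which by the isomorphism \eqref{isomduali} (equivalently, since $\ker\chi\neq F$) forces $v^*=0$. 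Hence $(w,t)=(0,1)$ and $\ker U$ is trivial. Combined with surjectivity, $U$ is a group isomorphism onto $\A$.

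The only mild subtlety — the step I would be most careful about — is the injectivity argument, since it is where the hypotheses on $R$ and $\chi$ genuinely enter (one needs $\chi$ nontrivial so that a nonzero $v^*$ gives a nontrivial character, and one uses that $R$ is an integral domain implicitly via the nonvanishing of the evaluation $\I_K(v)$ and the relation $t=1$). Everything else is a formal matching of multiplication laws, and no integration is required beyond evaluating characteristic functions at points.
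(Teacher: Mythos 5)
Your proposal is correct and follows essentially the same route as the paper: the homomorphism property and surjectivity are read off from the multiplication law (\ref{operazioneA}) and the definition of $\A$, and injectivity is obtained by testing on characteristic functions $\I_K$ at $x=0$ to force $t=1$, $v=0$, and then invoking (\ref{isomduali}) to get $v^*=0$. The only difference is cosmetic (you shrink $K$ to conclude $v=0$ before extracting $t=1$, while the paper gets both at once from $t\I_K(v)=1$ for every $K$), so there is nothing to add.
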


\begin{proof}
By (\ref{operazioneA}) the map $U$ preserves operations and it is cleraly surjective. For injectivity we have to prove that if $t\Phi(x+v)\la x,v^*\ra=\Phi(x)$ for every $\Phi\in\S(X)$ and every $x\in X$ then $t=1$ and $(v,v^*)=(0,0)$. If we take $x=0$ and $\Phi$ the characteristic function $\I_K$ of any compact open subgroup $K$ of $X$, we obtain that $t\I_K(v)=1$ for every $K$ and so $t=1$ and $v=0$. Therefore we have that $\la x,v^*\ra=1$ for every $x\in X$ and so $v^*=0$ by (\ref{isomduali}).
\end{proof}

\begin{rmk}
The homomorphism $U$ is a representation of $A(W)$ on the $R$-module $\S(X)$.
\end{rmk}

\noindent
The group $\B$ acts on $A(W)$ and so on $\A$ via the isomorphism in Lemma \ref{isomA}. This action is given by
\begin{equation*}
\begin{array}{rcl}
\B\times\A&\longrightarrow &\A\\
((\sigma,\varphi),U(w,t))&\longmapsto &U(\sigma(w),t\varphi(w)).
\end{array}
\end{equation*}
Moreover, we can identify $\B$ with the group of automorphisms of $\A$ acting trivially on the center $Z(\A)=\{t\cdot\mathrm{id}_{\S(X)}\in\Aut(\S(X))\;|\;t\in\Rx\}\cong \Rx$.

\bigskip

\noindent

We denote by $\BB$ the normalizer of $\A$ in $\Aut(\S(X))$, that is $$\BB=\left\{\mathbf{s}\in\Aut(\S(X))\;|\;\mathbf{s}\A\mathbf{s}^{-1}=\A\right\}.$$
So, if $\mathbf{s}$ is an element of $\BB$, conjugation by $\mathbf{s}$, denoted by $\mathrm{conj}(\mathbf{s})$, is an automorphism of $\A$.

\begin{lemma}\label{defpi}
The map
\begin{equation*}
\begin{array}{rcl}
\pi_0:\BB&\longrightarrow &\B\\
\mathbf{s}&\longmapsto &\mathrm{conj}(\mathbf{s})
\end{array}
\end{equation*}
is a group homomorphism
\end{lemma}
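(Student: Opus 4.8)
The plan is to verify two things: that $\pi_0$ is well-defined, i.e.\ that $\mathrm{conj}(\mathbf{s})$ genuinely lands in $\B$ (and not merely in $\Aut(\A)$), and then that it respects the group laws, which is the formal functoriality of conjugation.

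First I would recall that $\BB$, being the normalizer of $\A$ in $\Aut(\S(X))$, is a subgroup, so $\mathbf{s}^{-1}\in\BB$ whenever $\mathbf{s}\in\BB$. For $\mathbf{s}\in\BB$ the map $a\mapsto \mathbf{s}a\mathbf{s}^{-1}$ sends $\A$ into $\A$, and applying the same construction to $\mathbf{s}^{-1}$ produces a two-sided inverse, so $\mathrm{conj}(\mathbf{s})$ restricts to an automorphism of $\A$. To see that this automorphism lies in $\B$, I use the identification (via Lemma \ref{isomA}) of $\B$ with the group of automorphisms of $\A$ acting trivially on the center $Z(\A)=\{t\cdot\mathrm{id}_{\S(X)}\mid t\in\Rx\}$. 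But every element of $Z(\A)$ is a scalar operator, hence commutes with $\mathbf{s}$, so $\mathbf{s}(t\cdot\mathrm{id}_{\S(X)})\mathbf{s}^{-1}=t\cdot\mathrm{id}_{\S(X)}$; thus $\mathrm{conj}(\mathbf{s})$ fixes $Z(\A)$ pointwise and therefore defines an element of $\B$.

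Next, for $\mathbf{s}_1,\mathbf{s}_2\in\BB$ and any $a\in\A$ I would compute
\[
\mathrm{conj}(\mathbf{s}_1\mathbf{s}_2)(a)=(\mathbf{s}_1\mathbf{s}_2)\,a\,(\mathbf{s}_1\mathbf{s}_2)^{-1}=\mathbf{s}_1\big(\mathbf{s}_2\,a\,\mathbf{s}_2^{-1}\big)\mathbf{s}_1^{-1}=\mathrm{conj}(\mathbf{s}_1)\big(\mathrm{conj}(\mathbf{s}_2)(a)\big),
\]
so that $\mathrm{conj}(\mathbf{s}_1\mathbf{s}_2)=\mathrm{conj}(\mathbf{s}_1)\circ\mathrm{conj}(\mathbf{s}_2)$, which is exactly $\pi_0(\mathbf{s}_1\mathbf{s}_2)=\pi_0(\mathbf{s}_1)\pi_0(\mathbf{s}_2)$ under the composition law of $\B$ recorded after Proposition \ref{propB0}. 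One also notes $\mathrm{conj}(\mathrm{id}_{\S(X)})=(\mathrm{id},1)$, so the unit is preserved.

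There is no genuine obstacle in this statement: the argument is purely formal. The only point one must not skip is the verification that conjugation preserves $Z(\A)$ — this is what pins the target down to $\B$ — and it is immediate precisely because $Z(\A)$ consists of scalars.
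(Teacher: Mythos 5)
Your proof is correct and follows essentially the same route as the paper: conjugation by $\mathbf{s}\in\BB$ fixes the scalar center $Z(\A)$ pointwise, hence lands in $\B$, and the identity $\mathrm{conj}(\mathbf{s}_1\mathbf{s}_2)=\mathrm{conj}(\mathbf{s}_1)\mathrm{conj}(\mathbf{s}_2)$ gives the homomorphism property. You merely spell out details the paper treats as immediate (that $\mathrm{conj}(\mathbf{s})$ is an automorphism of $\A$ and that scalars commute with $\mathbf{s}$), which is fine.
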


\begin{proof}
Clearly $\mathrm{conj}(\mathbf{s})$ is trivial on $Z(\A)=\{t\cdot\mathrm{id}_{\S(X)}\in\Aut(\S(X))\;|\;t\in\Rx\}$ and so it lies in $\B$. Moreover $\mathrm{conj}(\mathbf{s}_1\mathbf{s}_2)=\mathrm{conj}(\mathbf{s}_1)\mathrm{conj}(\mathbf{s}_2)$ so that $\pi_0$ preserves the group operation.
\end{proof}

\begin{teor}\label{ses1}
The following sequence is exact: $$1\longrightarrow \Rx \longrightarrow \BB \stackrel{\pi_0}{\longrightarrow}B_0(W) \longrightarrow 1$$ where $\Rx$ injects in $\B$ by $t\mapsto t\cdot\mathrm{id}_{\S(X)}$.
\end{teor}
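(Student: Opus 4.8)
The plan is to verify exactness at each of the three non-trivial spots, treating injectivity and the identification of the kernel of $\pi_0$ as straightforward, and concentrating the real work on surjectivity of $\pi_0$.

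First I would treat the left end. Exactness at $\Rx$ means the map $t\mapsto t\cdot\mathrm{id}_{\S(X)}$ is injective, which is immediate since $\S(X)\neq 0$ (take a characteristic function of a compact open subgroup and evaluate). I would also note that this image lies in $\BB$ because scalar operators commute with everything, hence normalize $\A$.

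Next, exactness at $\BB$, i.e. $\ker(\pi_0)=\Rx\cdot\mathrm{id}$. If $\mathbf{s}\in\ker(\pi_0)$ then $\mathrm{conj}(\mathbf{s})$ is the identity automorphism of $\A$, so $\mathbf{s}$ commutes with every $U(w)$, $w\in W$. I would argue as in Schur-type computations but \emph{by hand}, avoiding Stone--von Neumann: commuting with all $U(v,0)$ (translations by $v\in X$) forces $\mathbf{s}$ to be given by multiplication by a function, and commuting with all $U(0,v^*)$ (multiplication operators by the characters $\la\,\cdot\,,v^*\ra$) forces that function to be constant; one then checks the constant is a unit because $\mathbf{s}$ is invertible. (Alternatively one notes the containment $\Rx\cdot\mathrm{id}\subseteq\ker\pi_0$ is clear, and $\ker\pi_0$ is precisely the automorphisms of $\S(X)$ centralizing $\A$; the content is that this centralizer is no larger than the scalars. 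I would spell out the two-step argument above.)

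The main obstacle is surjectivity of $\pi_0$, and this is exactly where, as the introduction warns, the complex-analytic/Segal argument and Lemma 2 of \cite{Wei} break down over a general integral domain without unique factorization. The strategy I would follow is the one announced in the text: rather than lift an arbitrary element of $\B$, use the fact recorded after \eqref{defmu} that $\B$ is generated by $\mu(t(Q(X)))$, $\mu(d'(\Isom(X^*,X)))$ and $\{(\mathrm{id},\tau):\tau\in X_1(W)\}$ (itself a consequence of Propositions~\ref{lemma6} and~\ref{omegaSp} together with the semidirect product decomposition $\B=\{(\mathrm{id},\tau)\}\rtimes\mu(\Sp)$). Since $\pi_0$ is a homomorphism and its image is a subgroup, it suffices to exhibit, for each of these three families of generators of $\B$, an explicit element of $\BB$ mapping onto it. For $(\mathrm{id},\tau)$ with $\tau=\la\,\cdot\,,w^*\ra\in X_1(W)$ one takes the operator $U(w^*,1)$ suitably interpreted, i.e. a Heisenberg operator already in $\A\subseteq\BB$; for $t_0(f)=\mu(t(f))$ one takes the multiplication operator $\Phi\mapsto\big(x\mapsto\chi(f(x))\Phi(x)\big)$ and checks directly that conjugation by it sends $U(w,t)$ to $U(t(f)w, t\cdot(\chi\circ f_{t(f)})(w))$, i.e. that it realizes $t_0(f)$; and for $d'_0(\beta)=\mu(d'(\beta))$ one takes (a normalization of) the Fourier transform $\Fou$ from Proposition~\ref{teorFourier}, transported to an operator on $\S(X)$ via the identification $X^*\cong X$ induced by $\beta$, and again checks that conjugation by it implements $d'(\beta)$ on $\A$ up to the expected character. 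Each verification is a direct computation with the explicit formula $U(w,t)\Phi(x)=t\Phi(x+v)\la x,v^*\ra$ and the definitions of $t$, $d'$, $\mu$; the only subtle point is keeping track of the Weil-type normalizing constants, but since we only need \emph{some} lift into $\BB$ (exactness, not a splitting), any nonzero scalar normalization is acceptable here and the precise constants are deferred to the later sections on $\gamma(f)$. Assembling these three cases gives $\mathrm{im}(\pi_0)\supseteq$ a generating set of $\B$, hence $\pi_0$ is surjective, completing the proof.
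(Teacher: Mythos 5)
Your surjectivity argument is essentially the one in the paper: you reduce to the three generating families of $\B$, lift $\mu(t(Q(X)))$ and $\mu(d'(\Isom(X^*,X)))$ by the explicit operators $\t_0$ and $\d'_0$ of Proposition \ref{rialz}, and lift the characters $(\mathrm{id},\tau)$ by Heisenberg operators already in $\A$ (the paper checks $(\mathrm{id},\tau)=\pi_0(U(a,-a^*))$ when $\tau(v,v^*)=\la a,v^*\ra\la v,a^*\ra$). That part, and the left-hand injectivity, are fine.

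The gap is in your identification of $\ker(\pi_0)$. As stated, your two steps are attached to the wrong families of operators: commuting with all translations $U(v,0)$ does \emph{not} force $\mathbf{s}$ to be multiplication by a function --- the translations themselves, and more generally convolution-type operators, centralize all translations without being multiplication operators, so that step fails as written. The workable order is the opposite one: commutation with the multiplication operators $U(0,v^*)$ should force $\mathbf{s}$ to be multiplication by a locally constant function, and commutation with translations then forces that function to be constant (a unit, by invertibility). But even this corrected first step is not free over $R$: since characters vanish nowhere and there is no density or continuity to invoke, you must show that commuting with multiplication by all $\la\,\cdot\,,v^*\ra$ implies commuting with multiplication by every $\I_{h+K}$, which requires finite Fourier inversion on the quotients $K_1/K_2$ (legitimate because these are $p$-groups, $p\in\Rx$, and (\ref{isomduali}) provides enough characters), plus a patching step to produce the single function; none of this is indicated in your sketch. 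The paper's proof takes a different and shorter route precisely here: it introduces the smeared operators $\mathcal{U}(\phi)=\int_W\phi(w)U(w)\,dw$ and the special functions $\phi_{P,Q}$ for which $\mathcal{U}(\phi_{P,Q})\Phi=[\Phi,Q]P$ (via Proposition \ref{teorFourier}); any $\mathbf{s}$ centralizing $\A$ commutes with these, and taking $\Phi=Q=\I_K$ with $\mathrm{vol}(K,dx)\in\Rx$ immediately gives $\mathbf{s}P=tP$ for all $P$, with $t\in\Rx$. Either repair your argument along the corrected lines just described, or adopt the $\phi_{P,Q}$ device, which settles the kernel in one computation.
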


\noindent
We prove this theorem in paragraph \ref{provases1}. Before that, we need to construct, as proposed in 13 of \cite{Wei}, some \virg{liftings} to $\BB$ of the applications $d_0$, $d_0'$ and $t_0$.


\subsubsection{Realization of forms on $\BB$}
We fix a Haar measure $dx$ on the finite dimensional $F$-vector space 
$X$ with values in $R$. We denote by $dx^*$ the dual measure of $dx$ on $X^*$ and $dw=dxdx^*$ the product Haar measure on $W$.

\vspace{0.2cm}
\noindent
From now on, we suppose that there exists a fixed square root $q^{\frac{1}{2}}$ of $q$ in $R$. If $\nu$ is an isomorphism of $F$-vector spaces and $|\nu|=q^a$ is its module, we denote $|\nu|^{\frac{1}{2}}=(q^{\frac{1}{2}})^a\in R$. 

\begin{defin}\label{applBB}
We define the following maps.
\begin{itemize}
		\item A group homomorphism $\d_0:\Aut(X)\longrightarrow \Aut(\S(X))$ defined by
$\d_0(\alpha)\Phi= |\alpha|^{-\frac{1}{2}}(\Phi\circ\alpha^{-1})$
for every $\alpha\in\Aut(X)$ and every $\Phi\in\S(X)$.
		\item A map $\d_0':\Isom(X^*,X)\longrightarrow \Aut(\S(X))$ defined by
$\d'_0(\beta)\Phi=|\beta|^{-\frac{1}{2}}(\Fou\Phi\circ\beta^{-1})$
for every $\beta\in\Isom(X^*,X)$ and every $\Phi\in\S(X)$, where $\Fou\Phi$ is the Fourier transform of $\Phi$ as in (\ref{Fourier}). We remark that $\d'_0(\beta)^{-1}=\d'_0(-\beta^*)=|\beta|^{\frac{1}{2}}\Fou^{-1}(\Phi\circ\beta)$.
	\item A group homomorphism $\t_0:Q(X)\longrightarrow \Aut(\S(X))$ defined by
$\t_0(f)\Phi= (\chi\circ f)\cdot\Phi$
for every $f\in Q(X)$ and every $\Phi\in\S(X)$.
\end{itemize}
\end{defin}

\noindent
We shall now to prove that they are actually onto $\BB$ and that they lift in $\BB$ the applications $d_0$, $d_0'$ and $t_0$.
\begin{prop}\label{rialz}
The images of $\d_0$, $\d_0'$ and $\t_0$ are in $\BB$ and they satisfy
$$\pi_0\circ\d_0=d_0 \qquad \pi_0\circ\d'_0=d'_0 \qquad \mbox{and} \qquad \pi_0\circ\t_0=t_0.$$
\end{prop}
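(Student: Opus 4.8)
The plan is to verify each of the three identities by a direct computation on the generators $U(w,t)$ of $\A$, using the explicit formulas for $U$, $\d_0$, $\d_0'$, $\t_0$ and the maps $d_0 = \mu\circ d$, etc. Concretely, for a given lifting $\s \in \{\d_0(\alpha), \d_0'(\beta), \t_0(f)\}$ one must show two things: first that $\s \A \s^{-1} = \A$, i.e. that $\s\, U(w,t)\, \s^{-1}$ is again of the form $U(w',t')$ for suitable $(w',t')$ depending $F$-linearly (in the $W$-coordinate) on $w$; and second that the resulting automorphism $U(w,t) \mapsto U(w',t')$ of $\A$ corresponds, under the identification of $\B$ with automorphisms of $\A$ trivial on the center, to the pair $\big(d(\alpha), \chi\circ f_{d(\alpha)}\big)$ (resp. for $d'(\beta)$, $t(f)$). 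The first claim gives membership in $\BB$; the second gives the compatibility $\pi_0\circ\d_0 = d_0$ and its analogues, since $\pi_0(\s) = \mathrm{conj}(\s)$ by Lemma~\ref{defpi}.

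First I would treat $\t_0(f)$, which is the easiest: $\t_0(f)\Phi = (\chi\circ f)\Phi$ is multiplication by the function $\chi\circ f$, and a short manipulation shows $\t_0(f)\, U(w,t)\, \t_0(f)^{-1}\Phi(x) = t\,\Phi(x+v)\la x,v^*\ra\, \chi\big(f(x+v)-f(x)\big)$; since $f(x+v)-f(x) = [x,\rho(v)] + f(v)$ with $\rho = \rho(f)$, this equals $U\big((v, v^*+\rho(v)),\, t\,\chi(f(v))\big)\Phi(x)$. Reading off the induced automorphism of $W = X\times X^*$, one gets $(v,v^*)\mapsto(v,v^*+\rho v)$, which is exactly the matrix $\begin{pmatrix}1&0\\ \rho&1\end{pmatrix}$; one must check this agrees with $t(f) = \begin{pmatrix}1&0\\-\rho&1\end{pmatrix}$ up to the sign conventions for the left action fixed in Section~1 (the transpose/$\xi$-twist discussion and Remark~\ref{dimOmegaSp}), and that the character part $w\mapsto \chi(f(v))$ matches $\chi\circ f_{t(f)}$. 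Then I would do $\d_0(\alpha)$: here $\d_0(\alpha)U(w,t)\d_0(\alpha)^{-1}\Phi(x) = t\,\Phi\big(\alpha^{-1}x + v\big)\la \alpha^{-1}x, v^*\ra = t\,\Phi\big(\alpha^{-1}(x+\alpha v)\big)\la x, \alpha^{*-1}v^*\ra$, so $(v,v^*)\mapsto(\alpha v, \alpha^{*-1}v^*)$, matching $d(\alpha)$, with trivial character part (consistent with $f_{d(\alpha)}=0$); the module factors $|\alpha|^{\pm 1/2}$ cancel and play no role. Finally $\d_0'(\beta)$ requires the Fourier transform: using $\Fou(U(w,t)\Phi)$ and the standard intertwining of translation and modulation under $\Fou$ (Proposition~\ref{teorFourier}), one computes $\d_0'(\beta)U(w,t)\d_0'(\beta)^{-1}$ and reads off the action on $W$, which should come out to $d'(\beta)$, again checking the character and module bookkeeping.

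The main obstacle I expect is not any single computation but the sign and normalization bookkeeping: the paper has deliberately switched to matrices acting on the left and adjusted the definitions of $d, d', t, t'$ and of $f_\sigma$ accordingly (see the remarks preceding Definition~\ref{applSp} and Remark~\ref{dimOmegaSp}), so the naive computation on $U(w,t)$ produces matrices that must be carefully reconciled with those conventions — in particular getting the sign of $\rho$ and the direction of $\alpha$ versus $\alpha^{*-1}$ right, and verifying that the induced degree-$2$ character is precisely $\chi\circ f_\sigma$ and not some twist by an element of $X_1(W)$. For $\d_0'(\beta)$ there is the additional subtlety that $\Fou$ maps $\S(X)$ to $\S(X^*)$, so $\d_0'(\beta)$ only makes sense because $\beta^{-1}$ pulls back to $X$; one must be careful that conjugation genuinely lands back in $\A$ (operators on $\S(X)$) and that the dual-measure normalization $\mathrm{vol}(K,dx)\mathrm{vol}(K_*,dx^*)=1$ established in Proposition~\ref{teorFourier} is what makes $|\beta|^{-1/2}$ the correct factor. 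Once the conventions are pinned down, each of the three verifications is a one-line substitution, and membership in $\BB$ is an automatic byproduct since the conjugates are visibly of the form $U(w',t')$.
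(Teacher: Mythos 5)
Your plan is essentially the paper's proof: conjugate the generators $U(w,t)$ of $\A$ by $\t_0(f)$, $\d_0(\alpha)$ and $\d'_0(\beta)$, observe that the result is again of the form $U(w',t')$ (so membership in $\BB$ comes for free), and identify the induced automorphism of $\A$ with the action of $t_0(f)$, $d_0(\alpha)$, $d'_0(\beta)$; the paper carries out exactly these three computations, with the $\d'_0$ case handled through the Fourier inversion facts of Proposition \ref{teorFourier}, as you anticipate.

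One concrete correction: there is no leftover sign to be \enquote{reconciled with the left-action conventions} in the $\t_0$ case. Since $\t_0(f)$ is applied last and $\t_0(f)^{-1}$ first, conjugation gives $\t_0(f)U(w,t)\t_0(f)^{-1}\Phi(x)=t\,\chi(f(x))\chi(f(x+v))^{-1}\Phi(x+v)\la x,v^*\ra$, so the extra factor is $\chi\bigl(f(x)-f(x+v)\bigr)=\chi(f(v))^{-1}\la x,\rho(v)\ra^{-1}$, not $\chi\bigl(f(x+v)-f(x)\bigr)$ as you wrote. Hence the induced map on $W$ is $(v,v^*)\mapsto(v,v^*-\rho(v))$, which is exactly $t(f)=\begin{pmatrix}1&0\\-\rho&1\end{pmatrix}$, and the character part is $\chi(-f(v))=\chi\circ f_{t(f)}(w)$, so $\pi_0(\t_0(f))=t_0(f)$ on the nose; with your sign you would instead land on $t_0(-f)$ and the stated identity would fail, and no appeal to conventions repairs that. (Similarly, in the $\d_0$ computation the conjugated operator should come out as $t\,\Phi(x+\alpha v)\la x,\alpha^{*-1}v^*\ra$, not $t\,\Phi\bigl(\alpha^{-1}(x+\alpha v)\bigr)\la x,\alpha^{*-1}v^*\ra$ -- the latter is not of the form $U(w',t')$ -- though your conclusion $(v,v^*)\mapsto(\alpha v,\alpha^{*-1}v^*)$ is the right one.) With these slips fixed, the remaining verifications proceed exactly as in the paper.
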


\begin{proof}
For every $\alpha\in \Aut(X)$, $\Phi\in \S(X)$, $w=(v,v^*)\in W$ and $x\in X$ we have
\begin{align*}
\d_0(\alpha)U(w)\d_0(\alpha)^{-1}\Phi(x)&=\d_0(\alpha)U(w)|\alpha|^{\frac{1}{2}}(\Phi\circ \alpha)(x)=\Phi(\alpha(\alpha^{-1}(x)+u))\la \alpha^{-1}(x),v^*\ra\\
																				&=\Phi(x+\alpha(u))\la x,\alpha^{*-1}(v^*)\ra= d_0(\alpha)U(w)\Phi(x).
\end{align*}
For every $\beta\in \Isom(X^*,X)$, $\Phi\in \S(X)$, $w=(v,v^*)\in W$ and $x\in X$ we have
\begin{align*}
\d'_0(\beta)U(w)\d'_0(\beta)^{-1}\Phi(x)&=\d'_0(\beta)U(w)|\beta|^{\frac{1}{2}}\Fou^{-1}(\Phi\circ\beta)(x)\\
																				&=\int_X\left(\int_{X^*}\Phi(\beta(x^*))\la x_1+v,-x^*\ra dx^*\right) \la x_1,v^*\ra \la x_1,\beta^{-1}(x)\ra dx_1\\
																				&=\int_X\left(\int_{X^*}\Phi(\beta(x^*))\la -v,x^*\ra \la x_1,-x^*\ra dx^* \right)\la x_1,v^*+\beta^{-1}(x)\ra dx_1\\
																				&=\Phi(\beta(v^*+\beta^{-1}(x)))\la -v,v^*+\beta^{-1}(x)\ra\\
																				&=\Phi(x+\beta(v^*))\la x,-\beta^{*-1}(v)\ra\la v,-v^*\ra=d'_0(\beta)U(w)\Phi(x).
\end{align*}
For every $f\in Q(X)$, $\Phi\in \S(X)$, $w=(v,v^*)\in W$ and $x\in X$ we have
\begin{align*}
\t_0(f)U(w)\t_0(f)^{-1}\Phi(x)&=\chi(f(x))\chi(f(x+v))^{-1}\Phi(x+v)\la x,v^*\ra\\
																					&=\chi(f(v))^{-1}\la x,\rho(v)\ra^{-1} \Phi(x+v)\la x,v^*\ra
																					=t_0(f)U(w)\Phi(x).
\end{align*}
These equalities prove at the same time that the images of $\d_0$, $\d_0'$ and $\t_0$ are in $\BB$ and that they lift in $\BB$ respectively the applications $d_0$, $d_0'$ and $t_0$.
\end{proof}

\noindent
Proposition \ref{rialz} and the injectivity of $d_0$ and $t_0$ entail injectivity for $\d_0$ and $\t_0$.
Moreover Propositions \ref{relazSp} and \ref{rialz} say that for every $f\in Q(X)$, $\alpha\in \Aut(X)$ and $\beta\in \Isom(X^*,X)$, the three elements $\d_0(\alpha)^{-1}\t_0(f)\d_0(\alpha)$, $\d'_0(\alpha\circ\beta)$ and $\d'_0(\beta\circ\alpha^{*-1})$ of $\BB$ differ, respectively from $\t_0(f^{\alpha})$, $\d_0(\alpha)\d'_0(\beta)$ and $\d'_0(\beta)\d_0(\alpha)$ just by elements of $\Rx$.  A direct calculation gives
\begin{equation}\label{relazBB}
\d_0(\alpha)^{-1}\t_0(f)\d_0(\alpha)=\t_0(f^{\alpha}) \quad\; \d'_0(\alpha\circ\beta)=\d_0(\alpha)\d'_0(\beta) \quad\; \d'_0(\beta\circ\alpha^{*-1})=\d'_0(\beta)\d_0(\alpha)
\end{equation}
so that in fact these elements are the identity.


\subsubsection{Proof of Theorem \ref{ses1}}\label{provases1}
\noindent
In this paragraph we give a proof of Theorem \ref{ses1} that is fundamental for the definition of the metaplectic group.

\vspace{0,2cm}
\noindent
Firstly we prove that $\pi_0$ is surjective: we know that $\B$ is generated by $\mu(t(Q(X)))$, $\mu(d'(\Isom(X^*,X)))$ and $\{(\mathrm{id},\tau), \tau\in X_1(W)\}$ so that it is sufficient to prove that every element in these sets is in the image of $\pi_0$. By Proposition \ref{rialz}, this is proved for the sets $\mu(t(Q(X)))$ and  $\mu(d'(\Isom(X^*,X)))$. Moreover by (\ref{isomduali}) we have that every character $\tau$ of $W$ is of the form $\tau(v,v^*)=\la a,v^*\ra \la v,a^*\ra$ for suitable $a\in X$ and $a^*\in X^*$. For every $w=(v,v^*)\in W$ and $t\in \Rx$ we have
$(1,\tau)U(w,t)=U(w,t\cdot\tau(w))=U(w,t\la a,v^*\ra \la v,a^*\ra)=U(a,-a^*)U(w,t)U(-a,a^*,\la a,-a^*\ra)$
and so $(\mathrm{id},\tau)=\pi_0(U(a,-a^*))$.

\vspace{0,2cm}
\noindent
Let us now calculate the kernel of $\pi_0$. For $\phi\in\S(X\times X^*)$ we denote by $\mathcal U(\phi)$ the operator on $\S(X)$ defined by
$$ \mathcal U(\phi)=\int_W U(w,\phi(w))dw=\int_W \phi(w)U(w)dw.$$
This means that for every $\Phi\in\S(X)$ and every $x\in X$ we have
$$ \mathcal U(\phi)\Phi(x)=\int_W \phi(w)(U(w)\Phi)(x)dw=\int_W \phi(v,v^*)\Phi(x+v)\la x,v^*\ra  dvdv^*$$      
where $w=(v,v^*)$. Given $P,Q\in\S(X)$ we denote by $\phi_{P,Q}\in\S(X\times X^*)$ the function defined by
$$\phi_{P,Q}(v,v^*)=\int_XP(v')Q(v'+v)\la -v',v^*\ra dv'$$
for every $v\in X,v^*\in X^*$. With this definition we obtain
$$\mathcal U(\phi_{P,Q})\Phi(x)=\int_X\Phi(x+v)\int_{X^*}\int_X  P(v')Q(v'+v)\la x-v',v^*\ra dv'dv^*dv$$
and using Proposition \ref{teorFourier} we have
$$\mathcal U(\phi_{P,Q})\Phi(x)= \int_X\Phi(x+v) P(x)Q(x+v)dv=\int_X\Phi(v)Q(v)dv P(x).$$

\noindent If we denoted by $[P,Q]=\int_X P(x)Q(x)dx$ for every $P,Q\in\S(X)$ we have $\mathcal U(\phi_{P,Q})\Phi=[\Phi,Q]P$.

\noindent
Now, $\s$ is in the kernel of $\pi_0$ if and only if it lies in the centralizer of $\A$ in $\Aut(\S(X))$. If this is the case, then $\s$ commutes with $ \mathcal U(\phi)$ in $\End(\S(X))$ for every $\phi\in\S(X\times X^*)$, i.e. $\s (U(\phi)\Phi)= U(\phi)(\s(\Phi))$. In particular $\s$ commutes with operators of the form $ \mathcal U(\phi_{P,Q})$ for every $P,Q\in\S(X)$, that is $[\s\Phi,Q]P=[\Phi,Q]\s P$ for every $\Phi,P,Q\in\S(X)$. If we choose $\Phi= Q=\I_K$ where $K$ is a compact open subgroup of $X$ with $\mathrm{vol}(K,dx)\in\Rx$, we can write $$\s P= \frac{[\s\Phi,Q]}{[\Phi,Q]}P.$$ In other words $\s$ is of the form $\Phi\mapsto t\Phi$ for a suitable $t\in R$ and $t$ has to be invertible since $\s$ is an automorphism. Hence $\ker(\pi_0)\subseteq\{t\cdot\mathrm{id}_{\S(X)}\in\Aut(\S(X))\,|\,t\in\Rx\}$. The converse is true because the center of a group is always contained in its centralizer.

\begin{rmk}
In proving Theorem \ref{ses1} the techniques used in \cite{Wei} could be adapted to show that $\ker(\pi_0)\cong \Rx$, but not to prove surjectivity of $\pi_0$.
\end{rmk}


\subsection{The metaplectic group}
\noindent We have just defined in (\ref{defmu}) and Lemma \ref{defpi} the group homomorphisms
\begin{equation*}
\begin{array}{rclcrcl}
\mu:\Sp&\longrightarrow & \B                 &\quad\text{and}\quad & \pi_0:\BB&\longrightarrow & \B \\
\sigma&\longmapsto & (\sigma,\chi\circ f_{\sigma})  &\qquad &\mathbf{s}&\longmapsto & conj(\mathbf{s}).
\end{array}
\end{equation*}
The first one is injective, while the second one is surjective with kernel isomorphic to $\Rx$. 
We remark that the definition of $\BB$ and these two homomorphisms depend on the choice of the 
integral domain $R$ and the smooth non-trivial character $\chi$.

\begin{defin}\label{definMp}
The \emph{metaplectic group} of $W$, attached to $R$ and $\chi$, is the subgroup $\mathrm{Mp}_{R,\chi}(W)=\Sp\times_{\B} \BB$ of $\Sp\times \BB$ of the pairs $(\sigma,\mathbf{s})$ such that $\mu(\sigma)=\pi_0(\mathbf{s})$.
\end{defin}

\noindent From now on, we write $\Mp$ instead of $\mathrm{Mp}_{R,\chi}(W)$. We have a group homomorphism
\begin{equation*}
\begin{array}{rcl}
\pi:\Mp&\longrightarrow & \Sp\\
(\sigma,\mathbf{s})&\longmapsto &\sigma.
\end{array}
\end{equation*}
The morphism $\pi_0$ is surjective and surjectivity in the category of groups is preserved under base-change, therefore $\pi$ is surjective. Moreover an element $(\sigma,\mathbf{s})$ is in the kernel of $\pi$ if and only if $\mathbf{s}$ is in the kernel of $\pi_0$, that is isomorphic to $\Rx$. Thus we obtain:

\begin{teor} The following sequence is exact:
\begin{equation}\label{ses2}
1\longrightarrow \Rx \longrightarrow \Mp \stackrel{\pi}{\longrightarrow}\Sp \longrightarrow 1
\end{equation}
where $\Rx$ injects in $\Mp$ by $t\mapsto (\mathrm{id}, t	\cdot\mathrm{id}_{\S(X)})$.
\end{teor}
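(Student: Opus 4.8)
The plan is to obtain the sequence (\ref{ses2}) purely formally from Theorem~\ref{ses1}, by pulling the short exact sequence $1\to\Rx\to\BB\stackrel{\pi_0}{\to}\B\to1$ back along the homomorphism $\mu:\Sp\to\B$; indeed the fiber product $\Mp=\Sp\times_{\B}\BB$ is exactly this pullback. Concretely I would argue in three steps: well-definedness and injectivity of the map $\Rx\to\Mp$, surjectivity of $\pi$, and the computation of $\ker(\pi)$.

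First, for the map $t\mapsto(\mathrm{id},t\cdot\mathrm{id}_{\S(X)})$ I would check that it lands in $\Mp$. Since $f_{\mathrm{id}}\equiv 0$ we have $\mu(\mathrm{id})=(\mathrm{id},1)$, the identity of $\B$; on the other hand $\pi_0(t\cdot\mathrm{id}_{\S(X)})=\mathrm{conj}(t\cdot\mathrm{id}_{\S(X)})$ is also the identity of $\B$, because scalar operators are central in $\Aut(\S(X))$. Hence $\mu(\mathrm{id})=\pi_0(t\cdot\mathrm{id}_{\S(X)})$, so $(\mathrm{id},t\cdot\mathrm{id}_{\S(X)})\in\Mp$, and injectivity is clear since $t\mapsto t\cdot\mathrm{id}_{\S(X)}$ is injective.

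Next, for surjectivity of $\pi$ I would take $\sigma\in\Sp$ and use that $\pi_0$ is surjective (Theorem~\ref{ses1}) to find $\mathbf{s}\in\BB$ with $\pi_0(\mathbf{s})=\mu(\sigma)$; then $(\sigma,\mathbf{s})\in\Mp$ and $\pi(\sigma,\mathbf{s})=\sigma$. (Equivalently, surjectivity of a homomorphism of groups is stable under base change.) Finally, to compute $\ker(\pi)$: a pair $(\sigma,\mathbf{s})\in\Mp$ lies in $\ker(\pi)$ iff $\sigma=\mathrm{id}$, and then $\pi_0(\mathbf{s})=\mu(\mathrm{id})=(\mathrm{id},1)$, i.e. $\mathbf{s}\in\ker(\pi_0)$; by the exactness part of Theorem~\ref{ses1} this forces $\mathbf{s}=t\cdot\mathrm{id}_{\S(X)}$ for some $t\in\Rx$. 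Thus $\ker(\pi)$ is precisely the image of $\Rx$, which is exactness at $\Mp$; combined with the previous two steps the whole sequence is exact.

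I do not expect any genuine obstacle in this argument: all the substance is already contained in Theorem~\ref{ses1}, whose proof is the hard one (surjectivity of $\pi_0$ via the explicit generators $\mu(t(Q(X)))$, $\mu(d'(\Isom(X^*,X)))$ and $\{(\mathrm{id},\tau)\}$, together with the identification of $\ker(\pi_0)$ with the scalars through the operators $\mathcal U(\phi_{P,Q})$ in \ref{provases1}). The only mild care needed here is to note that $\mu(\mathrm{id})=(\mathrm{id},1)$ really is the identity of $\B$, rather than some nontrivial $(\mathrm{id},\tau)$, so that $\ker(\pi)$ maps isomorphically onto $\ker(\pi_0)\cong\Rx$.
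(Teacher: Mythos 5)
Your proposal is correct and follows essentially the same route as the paper: the theorem is deduced directly from Theorem~\ref{ses1} by base change along $\mu$, with surjectivity of $\pi$ inherited from surjectivity of $\pi_0$ and $\ker(\pi)$ identified with $\ker(\pi_0)\cong\Rx$. Your extra check that $\mu(\mathrm{id})=(\mathrm{id},1)$ matches $\pi_0(t\cdot\mathrm{id}_{\S(X)})$ is a harmless elaboration of what the paper leaves implicit.
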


\noindent
Since $\BB=\B/\Rx$ and $\BB\subset \Aut(\S(X))$, we may regard $\mu$ as a projective representation of the symplectic group. Then, the metaplectic group is defined in such a way that the map
\begin{equation}\label{Weilrep}
\begin{array}{rcl}
\Mp&\longrightarrow & \BB\\
(\sigma,\mathbf{s})&\longmapsto &\mathbf{s}
\end{array}
\end{equation}
is a faithful representation on the $R$-module $\S(X)$ that lifts $\mu$.

\section{The Weil factor}

The sequence (\ref{ses2}) constitutes the object of our study and the rest of the article is devoted to study its properties.
Following the idea of Weil, we define in this section a map $\gamma$ that associates to every non-degenerate quadratic form $f$ on $X$ an invertible element $\gamma(f)\in\Rx$ (cfr. 14 of \cite{Wei}). This object, that we call \emph{Weil factor}, shows up at the moment of understanding the map $\pi$ by lifting a description of $\Sp$ by generators and relations. The study of its properties is at the heart of the results in \cite{Wei}. We prove that similar properties hold for $\gamma(f)\in\Rx$.\\ The general idea is: we find the relation (\ref{relgamma2}) in $\B$ and we lift it into $\BB$ finding an element of $\Rx$ thanks to Theorem \ref{ses1}. Then we proceed in two directions: on one hand we prove results that are useful to calculate $\gamma(f)$ while on the other we use the Weil factor to lift to $\Mp$ the relations of Proposition \ref{lemma6}.




\subsection{The Weil factor}
\noindent
Let $f\in Q^{nd}(X)$ be a non-degenerate quadratic form on $X$ and let $\rho\in \mathrm{Iso}(X, X^*)$ be its associated symmetric isomorphism. 
Explicit calculations in $\Sp$ give the equality
\begin{equation}\label{relgamma1} 
d'(\rho^{-1})t(f)d'(-\rho^{-1})t(f)=t(-f)d'(\rho^{-1}).
\end{equation}
Moreover, applying Proposition \ref{relazSp}, (\ref{relgamma1}) is equivalent to $\left(t(f)d'(\rho^{-1})\right)^3=\left(d'(\rho^{-1})t(f)\right)^3=1$.
It follows from equation (\ref{relgamma1}) that
\begin{equation}\label{relgamma2} 
d'_0(\rho^{-1})t_0(f)d'_0(-\rho^{-1})t_0(f)=t_0(-f)d'_0(\rho^{-1}).
\end{equation}

\noindent
We denote 
$\s=\s(f)=\d'_0(\rho^{-1})\t_0(f)\d'_0(-\rho^{-1})\t_0(f)$ and $\s'=\s'(f)=\t_0(-f)\d'_0(\rho^{-1})$.
We have by Proposition \ref{rialz} and equation (\ref{relgamma2}), $\pi_0(\s)=\pi_0(\s')$. Hence $\s$ and $\s'$ differ by an element of $\Rx$

\begin{defin}\label{defgamma}
Let $\gamma(f)\in\Rx$ be such that $\s=\gamma(f)\s'$. We call $\gamma(f)$ the \textit{Weil factor} associated to $f\in Q^{nd}(X)$. 
\end{defin}

\noindent
By formulas (\ref{relazBB}) we have $\gamma(f)=\left(\t_0(f)\d'_0(\rho^{-1})\right)^3=\left(\d'_0(\rho^{-1})\t_0(f)\right)^3$.

\bigskip

\noindent
We are now ready to investigate some properties of $\gamma$, starting from seeing what changes under the action of $\Aut(X)$.

\begin{prop}\label{propgamma1} Let $f\in Q^{nd}(X)$.
\begin{enumerate}
	\item[(i)] We have $\gamma(-f)=\gamma(f)^{-1}$.
	\item[(ii)] For every $\alpha\in\Aut(X)$ we have $\gamma(f^{\alpha})=\gamma(f)$.
\end{enumerate}
\end{prop}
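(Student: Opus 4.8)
The plan is to deduce both statements directly from the characterization $\gamma(f)=\left(\t_0(f)\d'_0(\rho^{-1})\right)^3$ obtained just above, together with the lifted relations (\ref{relazBB}) and the compatibility of $\d'_0$, $\t_0$ with the $\Aut(X)$-action. For part (i), I would start from the relation $\s(f)=\gamma(f)\s'(f)$ in $\BB$, i.e.
\[
\d'_0(\rho^{-1})\t_0(f)\d'_0(-\rho^{-1})\t_0(f)=\gamma(f)\,\t_0(-f)\d'_0(\rho^{-1}),
\]
and manipulate it into the shape of the defining relation for $\gamma(-f)$. The symmetric isomorphism associated to $-f$ is $-\rho$, so $\gamma(-f)$ is defined by $\d'_0(-\rho^{-1})\t_0(-f)\d'_0(\rho^{-1})\t_0(-f)=\gamma(-f)\,\t_0(f)\d'_0(-\rho^{-1})$. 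Using $\d'_0(\beta)^{-1}=\d'_0(-\beta^*)$ (so $\d'_0(\rho^{-1})^{-1}=\d'_0(-\rho^{-1})$ since $\rho$ is symmetric) and $\t_0(f)^{-1}=\t_0(-f)$ (as $\t_0$ is a homomorphism), the cleanest route is the cube formulation: from $\gamma(f)=\left(\t_0(f)\d'_0(\rho^{-1})\right)^3$ one gets $\gamma(f)^{-1}=\left(\d'_0(\rho^{-1})^{-1}\t_0(f)^{-1}\right)^3=\left(\d'_0(-\rho^{-1})\t_0(-f)\right)^3$, and the right-hand side is exactly $\gamma(-f)$ by the same characterization applied to $-f$ (whose associated isomorphism is $-\rho$, with $(-\rho)^{-1}=-\rho^{-1}$). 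This gives (i) with essentially no computation.

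For part (ii), I would use Proposition \ref{relazSp}(i) and the lifted version (\ref{relazBB}): for $\alpha\in\Aut(X)$ we have $\d_0(\alpha)^{-1}\t_0(f)\d_0(\alpha)=\t_0(f^\alpha)$ exactly (not just up to $\Rx$), and a similar exact identity relating $\d'_0(\alpha\circ\beta)$, $\d_0(\alpha)$, $\d'_0(\beta)$. The key point is that if $\rho$ is the symmetric isomorphism attached to $f$, then the one attached to $f^\alpha$ is $-\alpha^*\rho\alpha$ (computed in the proof of Proposition \ref{relazSp}(i)), hence $(-\alpha^*\rho\alpha)^{-1}=-\alpha^{-1}\rho^{-1}\alpha^{*-1}$. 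Then I would conjugate the whole cube $\left(\t_0(f)\d'_0(\rho^{-1})\right)^3$ by $\d_0(\alpha)$: conjugation turns $\t_0(f)$ into $\t_0(f^{\alpha})$ and, via (\ref{relazBB}), turns $\d'_0(\rho^{-1})$ into $\d'_0(\alpha^{-1}\rho^{-1}\alpha^{*-1})$; one must check the sign bookkeeping so that this matches $\d'_0\big((-\alpha^*\rho\alpha)^{-1}\big)$ up to the scalar $\gamma$, using $d'(-\beta^*)=d'(\beta)^{-1}$. Since $\gamma(f)$ lies in the central subgroup $\Rx$ of $\BB$, conjugation fixes it, so $\gamma(f^\alpha)=\gamma(f)$.

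The main obstacle I anticipate is purely the sign and transpose bookkeeping: ensuring that conjugating $\d'_0(\rho^{-1})$ by $\d_0(\alpha)$ produces precisely $\d'_0\big((\rho_{f^\alpha})^{-1}\big)$ and not its inverse or a sign-twisted variant, so that the conjugated cube is genuinely the defining cube for $\gamma(f^\alpha)$ rather than for $\gamma(-f^\alpha)=\gamma(f^\alpha)^{-1}$. This is where I would be careful to invoke the exact identities in (\ref{relazBB}) and Proposition \ref{relazSp}(ii) (with $\beta=\rho^{-1}$, $\alpha$ replaced appropriately) rather than the up-to-$\Rx$ versions, since an undetected scalar discrepancy would be fatal here; but because all the relevant relations have already been upgraded to exact equalities in $\BB$, the argument should go through cleanly once the matrix identities from Definition \ref{applSp} and Proposition \ref{relazSp} are tracked correctly.
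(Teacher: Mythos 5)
Your proposal follows essentially the same route as the paper: part (i) is exactly the inversion of the cube $\left(\t_0(f)\d'_0(\rho^{-1})\right)^3$ using $\d'_0(\rho^{-1})^{-1}=\d'_0(-\rho^{-1})$ and $\t_0(f)^{-1}=\t_0(-f)$, and part (ii) is exactly the conjugation of that cube by $\d_0(\alpha)$ via the exact relations (\ref{relazBB}), with the central scalar $\gamma(f)$ unchanged by conjugation. The one slip is the sign you assign to the symmetric isomorphism of $f^{\alpha}$: it is $\alpha^{*}\rho\alpha$, not $-\alpha^{*}\rho\alpha$. (The minus sign you quote from the proof of Proposition \ref{relazSp} refers to the lower-left entry of $t(f^{\alpha})$, which by Definition \ref{applSp} is $-\rho(f^{\alpha})$; the computed entry $-\alpha^{*}\rho\alpha$ therefore gives $\rho(f^{\alpha})=\alpha^{*}\rho\alpha$, as one also checks directly from $f^{\alpha}(x+y)-f^{\alpha}(x)-f^{\alpha}(y)=[x,\alpha^{*}\rho\alpha(y)]$.) With the correct sign the bookkeeping you were worried about is vacuous: conjugating $\d'_0(\rho^{-1})$ by $\d_0(\alpha)$ yields $\d'_0(\alpha^{-1}\rho^{-1}\alpha^{*-1})=\d'_0\big((\alpha^{*}\rho\alpha)^{-1}\big)$, which is precisely the operator in the defining cube for $\gamma(f^{\alpha})$, so $\gamma(f^{\alpha})=\d_0(\alpha)^{-1}\left(\t_0(f)\d'_0(\rho^{-1})\right)^3\d_0(\alpha)=\gamma(f)$, exactly as in the paper. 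Had you kept $-\alpha^{*}\rho\alpha$, the matching step would genuinely fail, since $\d'_0\big(-(\alpha^{*}\rho\alpha)^{-1}\big)$ is the inverse of the operator the conjugation produces; so the sign is the only point to correct, and once corrected your argument coincides with the paper's proof.
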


\begin{proof}
Let $f\in Q^{nd}(X)$ be associated to the symmetric isomorphism $\rho$.
\begin{enumerate}
		\item[(i)] 
		We have
$\gamma(-f)=\left(\t_0(-f)\d'_0(-\rho^{-1})\right)^3=\left(\d'_0(\rho^{-1})\t_0(f)\right)^{-3}=\gamma(f)^{-1}.$
		\item[(ii)] The symmetric isomorphism associated to $f^{\alpha}$ is $\alpha^{*}\rho\alpha$. Then 
		we have
\begin{align*}
\gamma(f^{\alpha})&=\left(\t_0(f^{\alpha})\d'_0(\alpha^{-1}\rho^{-1}\alpha^{*-1})\right)^3
												=\left(\d_0(\alpha)^{-1}\t_0(f)\d_0(\alpha)\d_0(\alpha)^{-1}\d'_0(\rho^{-1})\d_0(\alpha)\right)^3\\
												&=\d_0(\alpha)^{-1}\left(\t_0(f)\d'_0(\rho^{-1})\right)^3\d_0(\alpha)=\gamma(f).  &\qedhere
\end{align*}
\end{enumerate}
\end{proof}

\noindent
Proposition \ref{propgamma1} gives actually a strong result in a particular case: if $-1\in (F^{\times})^2$ and $a^2=-1$ with $a\in F^{\times}$ then $x\mapsto a x$ is an automorphism of $X$. By Proposition \ref{propgamma1} we have $\gamma(f)=\gamma(-f)=\gamma(f)^{-1}$, in other words $\gamma(f)^2=1$. This does not hold in general for a local field $F$ without square roots of $-1$. 

\vspace{0.2cm}
\noindent
Let $f\in Q^{nd}(X)$ be associated to $\rho$ and define $\varphi=\chi\circ f$. Notice that $\varphi(-x)=\varphi(x)$. 
For every $\Phi\in\S(X)$, we denote by $\Phi*\varphi$ the convolution product defined by 
\[(\Phi*\varphi)(x)=\int_X \Phi(x')\varphi(x-x')dx'\] for every $x\in X$. We have that $ \Phi*\varphi\in\S(X)$, indeed 
\begin{align*}
(\Phi*\varphi)(x)&=\int_X \Phi(x')\varphi(x-x')dx'=\varphi(x)\int_X \Phi(x')\varphi(-x')\la x,\rho(-x')\ra dx'\\
&=\varphi(x)\int_X \Phi(x')\varphi(x')\la x',-\rho(x)\ra dx'=|\rho|^{-\frac{1}{2}}\t_0(f)\d_0(-\rho^{-1})\t_0(f)\Phi(x)
\end{align*}
where we have used that $\varphi(x+y)=\varphi(x)\varphi(y)\la x,\rho(y)\ra$ for every $x,y\in X$.

\vspace{0.25cm}
\noindent Now we state a proposition that gives a summation formula for $\gamma(f)$ and that allows us to calculate in Theorem \ref{gamma-1} the value of $\gamma$ for a specific quadratic form over $F$.

\begin{prop}\label{calgamma} Let $f\in Q^{nd}(X)$ be associated to the symmetric isomorphism $\rho\in \Isom(X,X^*)$ and let $\s, \s'\in \BB$ as in Definition \ref{defgamma}. We set $\varphi=\chi\circ f$. 

\begin{enumerate}
	\item For every $\Phi\in\S(X)$ and for every $x\in X$ we have 
$$\s\Phi(x)=|\rho| \Fou(\Phi*\varphi)(\rho(x)) \quad\text{ and }\quad \s'\Phi(x)=|\rho|^{\frac{1}{2}}\Fou\Phi(\rho(x)) \varphi(x)^{-1}.$$
	\item For every $\Phi\in\S(X)$ and for every $x^*\in X^*$ we have
\begin{equation}\label{calgamma1}
\Fou(\Phi*\varphi)(x^*)=\gamma(f)|\rho|^{-\frac{1}{2}}\Fou\Phi(x^*)\varphi(\rho^{-1}x^*)^{-1}.
\end{equation}
	\item There exists a sufficiently large compact open subgroup $K_0$ of $X$ such that for every compact open subgroup $K$ of $X$ containing $K_0$ and for every $x^*\in X^*$, the integral $\int_K\varphi(x)\la x,x^*\ra dx$ does not depend on $K$. Moreover we have
\begin{equation}\label{calgamma2}
\int_K\varphi(x)\la x,x^*\ra dx=\gamma(f)|\rho|^{-\frac{1}{2}}\varphi(\rho^{-1}x^*)^{-1}
\end{equation}
and we denote $\Fou\varphi=\int_K\varphi(x)\la x,x^*\ra dx$.
	\item If $K$ is a sufficiently large compact open subgroup of $X$, we have
\begin{equation}\label{calgamma3}
\gamma(f)=|\rho|^{\frac{1}{2}}\int_K \chi(f(x))dx.
\end{equation}
\end{enumerate}
\end{prop}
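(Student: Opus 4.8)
The plan is to derive the four assertions in succession, each from the preceding ones together with the defining relation $\s=\gamma(f)\s'$ of Definition~\ref{defgamma}. For \textbf{part (1)} I would simply unwind Definition~\ref{applBB}. The computation carried out right before the statement rewrites the convolution by $\varphi$ as $\t_0(f)\d'_0(-\rho^{-1})\t_0(f)\Phi=|\rho|^{\frac12}(\Phi*\varphi)$, and from the definitions one reads off $\d'_0(\rho^{-1})\Psi(x)=|\rho|^{\frac12}\Fou\Psi(\rho(x))$ (using $|\rho^{-1}|=|\rho|^{-1}$ and $(\rho^{-1})^{-1}=\rho$) and $\t_0(-f)\Psi=\varphi^{-1}\Psi$. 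Composing, $\s\Phi(x)=\d'_0(\rho^{-1})\bigl(|\rho|^{\frac12}(\Phi*\varphi)\bigr)(x)=|\rho|\,\Fou(\Phi*\varphi)(\rho(x))$ and $\s'\Phi(x)=\t_0(-f)\d'_0(\rho^{-1})\Phi(x)=|\rho|^{\frac12}\Fou\Phi(\rho(x))\varphi(x)^{-1}$. Then \textbf{part (2)} is formal: since $\s=\gamma(f)\s'$, part (1) gives $|\rho|\,\Fou(\Phi*\varphi)(\rho(x))=\gamma(f)|\rho|^{\frac12}\Fou\Phi(\rho(x))\varphi(x)^{-1}$ for all $\Phi\in\S(X)$, $x\in X$; since $\rho$ is bijective we substitute $x=\rho^{-1}(x^*)$ with $x^*\in X^*$ arbitrary and cancel $|\rho|\in\Rx$, obtaining $(\ref{calgamma1})$.

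\textbf{Part (3)} carries the real content. I would apply $(\ref{calgamma1})$ to $\Phi=\I_K$ for a compact open subgroup $K\subseteq X$. Writing $I_K(x^*)=\int_K\varphi(x)\la x,x^*\ra dx$, the identity $\varphi(x+x')=\varphi(x)\varphi(x')\la x,\rho(x')\ra$ together with the symmetry of $\rho$ gives $(\I_K*\varphi)(x)=\int_K\varphi(x+x')\,dx'=\varphi(x)I_K(\rho(x))$, while the proof of Proposition~\ref{teorFourier} gives $\Fou\I_K=\mathrm{vol}(K,dx)\,\I_{K_*}$. Substituting these into $(\ref{calgamma1})$ and applying $\Fou^{-1}$ (legitimate because $\I_K*\varphi\in\S(X)$) yields
\[(\I_K*\varphi)(x)=\gamma(f)|\rho|^{-\frac12}\mathrm{vol}(K,dx)\int_{K_*}\varphi(\rho^{-1}x^*)^{-1}\la x,-x^*\ra\,dx^*.\]

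The one genuinely delicate point, forced by $\varphi$ not being compactly supported, is to choose $K_0$ large enough that $f$ takes values in $\ker\chi$ on the small subgroup $\rho^{-1}((K_0)_*)$ — possible since $f$ is a quadratic form; then $K\supseteq K_0$ forces $K_*\subseteq(K_0)_*$, hence $\rho^{-1}(K_*)\subseteq\rho^{-1}((K_0)_*)$, so the factor $\varphi(\rho^{-1}x^*)^{-1}$ equals $1$ throughout $K_*$ for every such $K$. Using $\int_{K_*}\la x,-x^*\ra\,dx^*=\Fou^{-1}\I_{K_*}(x)=\mathrm{vol}(K,dx)^{-1}\I_K(x)$ and $\mathrm{vol}(K,dx)\,\mathrm{vol}(K_*,dx^*)=1$ (both from the proof of Proposition~\ref{teorFourier}), the display collapses to $(\I_K*\varphi)(x)=\gamma(f)|\rho|^{-\frac12}\I_K(x)$. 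Comparing with $(\I_K*\varphi)(x)=\varphi(x)I_K(\rho(x))$ and putting $x^*=\rho(x)$ gives, for all $K\supseteq K_0$,
\[\int_K\varphi(x)\la x,x^*\ra\,dx=\gamma(f)|\rho|^{-\frac12}\varphi(\rho^{-1}x^*)^{-1}\,\I_{\rho(K)}(x^*),\]
which is independent of $K$ — and equals the right-hand side of $(\ref{calgamma2})$ — as soon as $x^*\in\rho(K)$; this is the claimed stabilization and the formula.

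Finally \textbf{part (4)} is the special case $x^*=0$ of part (3): since $\la x,0\ra=1$, $\varphi(\rho^{-1}(0))=\varphi(0)=1$ and $0\in\rho(K)$ always, $(\ref{calgamma2})$ reads $\int_K\chi(f(x))\,dx=\gamma(f)|\rho|^{-\frac12}$ for every sufficiently large $K$, i.e. $(\ref{calgamma3})$. I expect the work to be concentrated entirely in part (3): parts (1), (2), (4) are bookkeeping, whereas in (3) one must organise the passage to the limit in $K$ around the duality identities $K_{**}=K$ and $\mathrm{vol}(K)\,\mathrm{vol}(K_*)=1$ and around the choice of $K_0$ that trivialises $\varphi$ on $\rho^{-1}(K_*)$.
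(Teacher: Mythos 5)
Your argument is correct, and for parts (1), (2) and (4) it coincides with the paper's (the paper expands $\s$ and $\s'$ as explicit double integrals, whereas you quote the identity $\t_0(f)\d'_0(-\rho^{-1})\t_0(f)\Phi=|\rho|^{\frac12}(\Phi*\varphi)$ established just before the statement -- same content). In part (3) you take a genuinely different route. The paper plugs $\Phi=\I_H$ into (\ref{calgamma1}), computes $\Fou(\I_H*\varphi)(x^*)$ by interchanging the two integrals over a compact $K_0$ containing $H$ and the support of $\I_H*\varphi$, factors out $\Fou\I_H(x^*)=\mathrm{vol}(H)\I_{H_*}(x^*)$ and cancels it for $x^*\in H_*$, and then lets $H$ shrink so that the $H_*$ cover $X^*$ (with $K_0$ depending on $H$). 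You instead keep $\Phi=\I_K$, apply $\Fou^{-1}$, and kill the factor $\varphi(\rho^{-1}x^*)^{-1}$ on $K_*$ by choosing $K_0$ with $\varphi\equiv 1$ on $\rho^{-1}((K_0)_*)$ -- a legitimate choice, since $f(\varpi^N y)=\varpi^{2N}f(y)$ and $f$ is bounded on a lattice, so a small enough lattice lands in $\ker\chi$; combined with $K_{**}=K$ and $\mathrm{vol}(K)\mathrm{vol}(K_*)=1$ this yields the exact identity $\int_K\varphi(x)\la x,x^*\ra dx=\gamma(f)|\rho|^{-\frac12}\varphi(\rho^{-1}x^*)^{-1}\I_{\rho(K)}(x^*)$ for every $K\supseteq K_0$ and every $x^*\in X^*$. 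What this buys is precision about the quantifiers: the indicator factor shows the integral stabilizes at the value (\ref{calgamma2}) only once $\rho^{-1}(x^*)\in K$, and it vanishes for fixed $K$ and sufficiently large $x^*$, so the independence of $K$ cannot be uniform in $x^*$ for a single $K_0$; this matches what the paper's own argument actually proves (there $K_0$ depends on $H$ and the identity is first obtained only for $x^*\in H_*$), and your reading of the statement is the correct one. Part (4) is unaffected, since $0\in\rho(K)$ always.
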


\begin{proof} \mbox{}
\begin{enumerate}
		\item For every $\Phi\in\S(X)$ and every $x\in X$ we have
\begin{align*}
\s\Phi(x)&=\d'_0(\rho^{-1})\t_0(f)\d'_0(-\rho^{-1})\t_0(f)\Phi(x)\\
					 &=|\rho|\int_X\int_X\Phi(x_1)\varphi(x_1)\la x_1,-\rho(x_2)\ra \varphi(x_2)\la x_2,\rho(x)\ra dx_1 dx_2\\
					 &=|\rho|\int_X\int_X\Phi(x_1)\varphi(-x_1)\la x_1,-\rho(x_2)\ra \varphi(x_2)\la x_2,\rho(x)\ra dx_1 dx_2\\
					 &=|\rho|\int_X\int_X\Phi(x_1)\varphi(x_2-x_1)\la x_2,\rho(x)\ra dx_1 dx_2
	         =|\rho| \Fou(\Phi*\varphi)(\rho(x))
\end{align*}
and $\s'\Phi(x)=\t_0(-f)\d'_0(\rho^{-1})\Phi(x)=\t_0(-f)|\rho|^{\frac{1}{2}}\Fou(\Phi\circ\rho)(x)=\varphi(x)^{-1}|\rho|^{\frac{1}{2}}\Fou(\Phi\circ\rho)(x)$.
		\item By the equality $\s=\gamma(f)\s'$ we have $|\rho| \Fou(\Phi*\varphi)(\rho(x))=\gamma(f)|\rho|^{\frac{1}{2}}\Fou\Phi(\rho(x))\varphi(x)^{-1}$ and replacing $\rho(x)$ by $x^*$ we obtain the equality (\ref{calgamma1}).
		\item Taking $\Phi=\I_H$ for a compact open subgroup $H$ of $X$ in formula (\ref{calgamma1}), we obtain
$$\int_X (\I_H*\varphi)(x_1)\la x_1,x^*\ra dx_1=\gamma(f)|\rho|^{-\frac{1}{2}} \Fou\I_{H}(x^*)\varphi(\rho^{-1}x^*)^{-1}.$$
We want to calculate the integral in the left hand side. We can take a compact open subgroup $K_0$ of $X$ large enough to contain both $H$ and the support of $\I_H*\varphi$ obtaining
$$\int_X (\I_H*\varphi)(x_1)\la x_1,x^*\ra dx_1=\int_{K_0}\int_H\varphi_{|_{K_0}}(x_1-x_2)dx_2\la x_1,x^*\ra dx_1.$$
Now, we can prove that $\varphi_{|_{K_0}}$ is locally constant and that we can change the order of the two integrals, i.e.
\begin{align*}
\int_X (\I_H*\varphi)(x_1)\la x_1,x^*\ra dx_1&=\int_H\int_{K_0}\varphi_{|_{K_0}}(x_1-x_2)\la x_1,x^*\ra dx_1dx_2\\
																						 &=\int_H\int_{K_0}\varphi_{|_{K_0}}(x'_1)\la x'_1+x_2,x^*\ra dx'_1dx_2\\
																						 &=\Fou\I_H(x^*)\int_{K_0}\varphi_{|_{K_0}}(x'_1)\la x'_1,x^*\ra dx'_1.
\end{align*}
Since $\Fou\I_H=\mathrm{vol}(H)\I_{H_*}$ and $\mathrm{vol}(H)\neq 0$, we obtain the equality (\ref{calgamma2}) for every $x^*\in H_*$ and every $H$ compact open subgroup of $X$. Now $H_*$ cover $X^*$, varying $H$, and so the equality holds for every $x^*\in X^*$ . It is clear that the equality holds also for every compact open subgroup $K$ of $X$ containing $K_0$.
\item Setting $x^*=0$ in (\ref{calgamma2}) we obtain
$\gamma(f)=|\rho|^{\frac{1}{2}}\int_K \varphi(x)dx=|\rho|^{\frac{1}{2}}\int_K \chi(f(x))dx.$\qedhere
\end{enumerate}
\end{proof}

\begin{rmk}\label{propgamma3}
The second result in Proposition \ref{propgamma1} is true more generally for every $\alpha'\in \Isom(X', X)$ where $X'$ is a finite dimensional $F$-vector space. In fact if $K'$ is a compact open subgroup of $X'$ large enough,  $f\in Q^{nd}(X)$ and $\alpha\in\Isom(X',X)$ by (\ref{calgamma3}) we have
\begin{align*}
\gamma(f\circ\alpha)&=|\alpha^*\rho\alpha|^{\frac{1}{2}}\int_{K'} \chi(f(\alpha(x')))dx'=|\rho|^{\frac{1}{2}}|\alpha|\int_{X'}\I_{\alpha(K')}(\alpha(x'))\chi(f(\alpha(x')))dx'\\
&=|\rho|^{\frac{1}{2}}\int_{X}\I_{\alpha(K')}(x)\chi(f(x))dx'=\gamma(f).
\end{align*}
\end{rmk}


\subsubsection{Symplectic generators in $\BB$}

\begin{defin}\label{defr0}
Let $\sigma\in\Omega(W)$. By Proposition \ref{omegaSp} we can write $\sigma=t(f_1)d'(\beta)t(f_2)$ for unique $f_1,f_2\in Q(X)$ and $\beta\in\Isom(X^*,X)$. We define a map $\r_0:\Omega(W)\to\BB$ by $$\r_0(\sigma)=\t_0(f_1)\d'_0(\beta)\t_0(f_2)$$
for every $\sigma\in\Omega(W)$.
\end{defin}

\noindent
Now we state a theorem that says how an equality $\sigma''=\sigma\sigma'$ in $\Omega(W)$ lifts to $\BB$. After a comparison with section 15 of \cite{Wei} the differences turn out to be the use of Fourier transform for Schwartz functions and previous changes in notations. Finally we have clarified some points and made them explicit.

\begin{teor}\label{teoremar}
Let $\sigma=\begin{pmatrix}\alpha&\beta\\ \gamma&\delta\end{pmatrix}$, $\sigma'=\begin{pmatrix}\alpha'&\beta'\\ \gamma'&\delta'\end{pmatrix}$ and $\sigma''=\begin{pmatrix}\alpha''&\beta''\\ \gamma''&\delta''\end{pmatrix}$ be elements of $\Omega(W)$ such that $\sigma''=\sigma\sigma'$. Then 
$$\r_0(\sigma)\r_0(\sigma')=\gamma(f_0)\r_0(\sigma'')$$
where $f_0$ is the non-degenerate quadratic form on $X$ associated to the symmetric isomorphism $-\beta^{-1}\beta''\beta'^{-1}:X\rightarrow X^*$.
\end{teor}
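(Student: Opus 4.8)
The plan is to produce the scalar $\gamma(f_0)$ abstractly from the exactness in Theorem~\ref{ses1}, and then to compute it by transporting everything, via the normalization identities~(\ref{relazBB}), onto the cubic relation $\gamma(f_0)=\big(\d'_0(\rho_0^{-1})\t_0(f_0)\big)^3$.

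First I would fix the decompositions of Remark~\ref{dimOmegaSp}: write $\sigma=t(f_1)d'(\beta)t(f_2)$, $\sigma'=t(f_1')d'(\beta')t(f_2')$, $\sigma''=t(f_1'')d'(\beta'')t(f_2'')$, so that by definition $\r_0(\sigma)=\t_0(f_1)\d'_0(\beta)\t_0(f_2)$ and similarly for $\sigma',\sigma''$. Since $\t_0$ is a homomorphism on $Q(X)$, one has $\r_0(\sigma)\r_0(\sigma')=\t_0(f_1)\,\d'_0(\beta)\,\t_0(f_0)\,\d'_0(\beta')\,\t_0(f_2')$ with $f_0:=f_2+f_1'$. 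A short matrix computation, using that the chosen decomposition of $\sigma$ forces its $(1,1)$-block to be $-\beta\,\rho(f_2)$ and that of $\sigma'$ forces its $(2,2)$-block to be $-\rho(f_1')\,\beta'$, shows that the $(1,2)$-block of $\sigma\sigma'$ is $\beta''=-\beta\,\rho(f_0)\,\beta'$; thus $\rho(f_0)=-\beta^{-1}\beta''\beta'^{-1}$, so $f_0$ is exactly the form appearing in the statement and, being attached to an isomorphism, lies in $Q^{nd}(X)$. Set $\rho_0:=\rho(f_0)\in\Isom(X,X^*)$. By Proposition~\ref{rialz}, $\pi_0\circ\r_0=\mu$ on $\Omega(W)$, so $\pi_0(\r_0(\sigma)\r_0(\sigma'))=\mu(\sigma\sigma')=\mu(\sigma'')=\pi_0(\r_0(\sigma''))$, and Theorem~\ref{ses1} yields a scalar $c\in\Rx$ with $\r_0(\sigma)\r_0(\sigma')=c\,\r_0(\sigma'')$; it remains to identify $c=\gamma(f_0)$.

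Rearranging $\sigma\sigma'=\sigma''$ in $\Sp$ gives $d'(\beta)t(f_0)d'(\beta')=t(f_1''-f_1)\,d'(\beta'')\,t(f_2''-f_2')$, so it suffices to prove the lifted identity
\[
\d'_0(\beta)\,\t_0(f_0)\,\d'_0(\beta') = \gamma(f_0)\,\t_0(f_1''-f_1)\,\d'_0(\beta'')\,\t_0(f_2''-f_2'),
\]
since substituting it into $\r_0(\sigma)\r_0(\sigma')=\t_0(f_1)\d'_0(\beta)\t_0(f_0)\d'_0(\beta')\t_0(f_2')$ and collapsing the $\t_0$-products (with $\gamma(f_0)$ a central scalar) gives $\r_0(\sigma)\r_0(\sigma')=\gamma(f_0)\,\r_0(\sigma'')$. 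For the lifted identity I would put $a:=\beta\rho_0\in\Aut(X)$ and $b:=\rho_0^{-1}\beta'^{*-1}\in\Aut(X)$, so that $\beta=a\circ\rho_0^{-1}$ and $\beta'=\rho_0^{-1}\circ b^{*-1}$ with $b^{*-1}=\rho_0\beta'$; hence $\d'_0(\beta)=\d_0(a)\d'_0(\rho_0^{-1})$ and $\d'_0(\beta')=\d'_0(\rho_0^{-1})\d_0(b)$ by~(\ref{relazBB}). From $\gamma(f_0)=\big(\d'_0(\rho_0^{-1})\t_0(f_0)\big)^3$ together with $\t_0(f_0)^{-1}=\t_0(-f_0)$ and $\d'_0(\rho_0^{-1})^{-1}=\d'_0(-\rho_0^{-1})$ (as $\rho_0^{-1}$ is symmetric) one gets
\[
\d'_0(\rho_0^{-1})\,\t_0(f_0)\,\d'_0(\rho_0^{-1}) = \gamma(f_0)\,\t_0(-f_0)\,\d'_0(-\rho_0^{-1})\,\t_0(-f_0).
\]
Substituting this and pushing $\d_0(a)$ and $\d_0(b)$ inward through $\d_0(g)\t_0(f)=\t_0(f^{g^{-1}})\d_0(g)$, $\d_0(a)\d'_0(-\rho_0^{-1})=\d'_0(-\beta)$, and $\d'_0(-\beta)\d_0(b)=\d'_0(-\beta\circ b^{*-1})=\d'_0(\beta'')$ (all from~(\ref{relazBB}), the last using $b^{*-1}=\rho_0\beta'$ and $\beta''=-\beta\rho_0\beta'$) makes the product telescope to
\[
\d'_0(\beta)\,\t_0(f_0)\,\d'_0(\beta') = \gamma(f_0)\,\t_0\big((-f_0)^{a^{-1}}\big)\,\d'_0(\beta'')\,\t_0\big((-f_0)^b\big).
\]

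To conclude I would apply $\pi_0$ to this last identity, which kills the scalar $\gamma(f_0)$, and compare the two resulting decompositions of $d'(\beta)t(f_0)d'(\beta')$ of the form $t(\cdot)\,d'(\beta'')\,t(\cdot)$: the uniqueness in Proposition~\ref{omegaSp} then forces $(-f_0)^{a^{-1}}=f_1''-f_1$ and $(-f_0)^b=f_2''-f_2'$, completing the proof. (Alternatively these two equalities follow directly from $\rho(f^g)=g^*\rho(f)g$ and Remark~\ref{dimOmegaSp}, e.g. $\rho((-f_0)^{a^{-1}})=-a^{*-1}\rho_0 a^{-1}=-\beta^{*-1}\rho_0^{-1}\beta^{-1}$.) I expect the main obstacle to be purely one of bookkeeping — choosing $a,b$ with the correct transposes and checking that the chain of substitutions from~(\ref{relazBB}) genuinely telescopes to the claimed right-hand side — whereas the conceptual content (the scalar exists by Theorem~\ref{ses1}, and its value is read off from the cubic relation defining $\gamma$) is short.
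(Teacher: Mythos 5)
Your proposal is correct, but it proves the theorem by a genuinely different route than the paper. You both start the same way: reduce, via the homomorphism property of $\t_0$ and the rearranged identity $d'(\beta)t(f_0)d'(\beta')=t(f_1''-f_1)d'(\beta'')t(f_2''-f_2')$ in $\Sp$, to identifying the scalar in the lifted middle identity, and both identify $\rho(f_0)=-\beta^{-1}\beta''\beta'^{-1}$ from Remark \ref{dimOmegaSp}. From there the paper computes analytically: it writes $\d'_0(\beta)\t_0(f_0)\d'_0(\beta')\Phi$ and $\t_0(f_3)\d'_0(\beta'')\t_0(f_4)\Phi$ as explicit integral operators, using Proposition \ref{teorFourier} (Fourier of a product is a convolution) and the summation formula (\ref{calgamma2}) for $\Fou\varphi_0$ from Proposition \ref{calgamma}, and then compares the two kernels $c_i\int_X\Phi(x_1)\vartheta_i(x_1,x)\,dx_1$ to read off $\lambda=\gamma(f_0)$. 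You instead argue purely algebraically: factor $\beta=a\rho_0^{-1}$, $\beta'=\rho_0^{-1}b^{*-1}$ with $a=\beta\rho_0$, $b=\rho_0^{-1}\beta'^{*-1}$, use the exact relations (\ref{relazBB}) together with the cubic form $\gamma(f_0)=\bigl(\d'_0(\rho_0^{-1})\t_0(f_0)\bigr)^3$ of Definition \ref{defgamma} to telescope to $\gamma(f_0)\,\t_0\bigl((-f_0)^{a^{-1}}\bigr)\d'_0(\beta'')\t_0\bigl((-f_0)^{b}\bigr)$, and then invoke the uniqueness in Proposition \ref{omegaSp} (after applying $\pi_0$, which kills the scalar) to match the quadratic forms with $f_1''-f_1$ and $f_2''-f_2'$; I checked the transposes ($b^{*-1}=\rho_0\beta'$, $-\beta b^{*-1}=\beta''$) and the conjugation steps, and they do telescope as claimed. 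What each approach buys: yours avoids all integration and Proposition \ref{calgamma}, resting only on the operator identities (\ref{relazBB}), the cubic relation, and the uniqueness of the decomposition, so it is shorter and more formal; the paper's computation is heavier but produces the explicit integral kernels of both sides, verifying the constant concretely rather than through the normalization identities, and in particular does not depend on (\ref{relazBB}) holding on the nose beyond what is needed to state the cubic relation.
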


\begin{proof}
Since $\r_0(\sigma)\r_0(\sigma')$ and $\r_0(\sigma'')$ have the same image by $\pi_0$, we can set $\r_0(\sigma)\r_0(\sigma')=\lambda\r_0(\sigma'')$ where $\lambda\in\Rx$ depends on $\sigma,\sigma'$. By Definition \ref{defr0} we have
$$\t_0(f_1)\d'_0(\beta)\t_0(f_2)\t_0(f'_1)\d'_0(\beta')\t_0(f'_2)=\lambda\t_0(f''_1)\d'_0(\beta'')\t_0(f''_2)$$
for suitable $f_1, f_2, f'_1, f'_2, f''_1, f''_2 \in Q(X)$. Setting $f_0=f_2+f'_1$, $f_3=-f_1+f''_1$ and $f_4=f''_2-f'_2$ we obtain
$$\d'_0(\beta)\t_0(f_0)\d'_0(\beta')=\d'_0(\beta)\t_0(f_0)\d'_0(-\beta'^*)^{-1}=\lambda\t_0(f_3)\d'_0(\beta'')\t_0(f_4)$$
where we have used that $\d'_0(\beta')^{-1}=\d'_0(-\beta'^*)$. By Remark \ref{dimOmegaSp} the symmetric homomorphisms associated to $f_2$ and $f'_1$ are $\rho_2=-\beta^{-1}\alpha$ and $\rho'_1=-\delta'\beta'^{-1}$, hence the symmetric homomorphism associated to $f_0$ is $\rho_0=\rho_2+\rho'_1=-\beta^{-1}(\alpha\beta'+\beta\delta')\beta'^{-1}=-\beta^{-1}\beta''\beta'^{-1}=-\beta'^{*-1}\beta''^*\beta^{*-1}$ that is also an isomorphism.\\
We set $\varphi_i=\chi\circ f_i$ for $i=0,3,4$. For every $\Phi\in\S(X)$ and $x\in X$ we have
$$\d'_0(\beta)\t_0(f_0)\d'_0(-\beta'^{*})^{-1}\Phi(x)=|\beta|^{-\frac{1}{2}}|\beta'|^{\frac{1}{2}}\Fou(\Fou^{-1}(\Phi\circ(-\beta'^{*})) \cdot \varphi_0)(\beta^{-1}x).$$
By Proposition \ref{teorFourier} the Fourier transform of a pointwise product is the convolution product of the Fourier transforms and then $\d'_0(\beta)\t_0(f_0)\d'_0(\beta')\Phi(x)=|\beta|^{-\frac{1}{2}}|\beta'|^{\frac{1}{2}}\big((\Phi\circ\beta'^{*})*\Fou\varphi_0\big)(\beta^{-1}x).$
Using formula (\ref{calgamma2}) we obtain
\begin{align*}
\d'_0(\beta)\t_0(f_0)\d'_0(\beta')\Phi(x)&=\gamma(f_0)|\rho_0|^{-\frac{1}{2}}|\beta|^{-\frac{1}{2}}|\beta'|^{\frac{1}{2}}\big((\Phi\circ\beta'^{*})*(\varphi_0\circ\rho_0^{-1})^{-1}\big)(\beta^{-1}x)\\
&=\gamma(f_0)|\beta''|^{-\frac{1}{2}}|\beta'|\big((\Phi\circ\beta'^{*})*(\varphi_0\circ\rho_0^{-1})^{-1}\big)(\beta^{-1}x)\\
&=\gamma(f_0)|\beta''|^{-\frac{1}{2}}|\beta'| \int_{X^*}\Phi(\beta'^*(x^*)) \varphi_0(\beta^*\beta''^{*-1}\beta'^*(x^*)-\beta'\beta''^{-1}(x))^{-1}dx^*\\
&=\gamma(f_0)|\beta''|^{-\frac{1}{2}}\int_X \Phi(x_1) \varphi_0(-\beta'\beta''^{-1}(x)+\beta^*\beta''^{*-1}(x_1))^{-1}dx_1
\end{align*}
where in the last step we have used the change of variables $\beta'^*(x^*)\mapsto x_1$.
Furthermore we have
$$\t_0(f_3)\d'_0(\beta'')\t_0(f_4)\Phi(x)=|\beta''|^{-\frac{1}{2}}\int_X \Phi(x_1) \varphi_4(x_1)\varphi_3(x)\la x_1,\beta''^{-1}x \ra dx_1$$
and then 
$$\gamma(f_0)\int_X \Phi(x_1) \varphi_0(-\beta'\beta''^{-1}(x)+\beta^*\beta''^{*-1}(x_1))^{-1}dx_1=\lambda\int_X \Phi(x_1) \varphi_4(x_1)\varphi_3(x)\la x_1,\beta''^{-1}x \ra dx_1.$$
We observe that the two sides are of the form $c_i\int_X\Phi(x_1)\vartheta_i(x_1,x) dx_1$ for $i=1,2$, where $c_i\in \Rx$ and $\vartheta_i$ are characters of degree 2 of $X\times X$. Since the equality holds for every $\Phi\in\S(X)$ and every $x\in X$, we obtain that $c_1=c_2$ and $\vartheta_1=\vartheta_2$ and so $\gamma(f_0)=\lambda$.
\end{proof}


\subsection{Metaplectic realizations of forms}\label{applMp}
Definitions \ref{applSp} and \ref{applBB} allow us to define some applications from $\Aut(X)$, $\Isom(X^*,X)$ and $Q(X)$ to $\Mp$, similar to those in 34 of \cite{Wei}, that satisfy relations analogous to those of $\d_0$, $\d'_0$ and $\t_0$.

\begin{defin}\label{defapplMp}
Let $\Mp$ be as in Definition \ref{definMp}. We define the following applications.
\begin{itemize}
		\item The injective group homomorphism $\d:\Aut(X)\longrightarrow \Mp$ given by $\d(\alpha)=(d(\alpha),\d_0(\alpha))$ for every $\alpha\in\Aut(X)$.
		\item The injective map $\d':\Isom(X^*,X)\longrightarrow \Mp$ given by $\d'(\beta)=(d'(\beta),\d'_0(\beta))$ for every $\beta\in \Isom(X^*,X)$.
		\item The injective group homomorphism $\t:Q(X)\longrightarrow \Mp$ given by $\t(f)=(t(f),\t_0(f))$ for every $f\in Q(X)$.
\end{itemize}
\end{defin}

\noindent
By Proposition \ref{relazSp} and by (\ref{relazBB}) we have 
\begin{equation}\label{relazMp}
\d(\alpha)^{-1}\t(f)\d(\alpha)=\t(f^{\alpha})
\end{equation} 
for every $f\in Q(X)$ and $\alpha\in \Aut(X)$. We have also
$\d'(\alpha\circ\beta)=\d(\alpha)\d'(\beta)$ and $\d'(\beta\circ\alpha^{*-1})=\d'(\beta)\d(\alpha)$
for every $\alpha\in \Aut(X)$ and $\beta\in \Isom(X^*,X)$.

\vspace{0.25cm}
\noindent
As in Definition \ref{defr0}, we can define a map from $\Omega(W)$ to $\Mp$.  By Proposition \ref{omegaSp} every element $\sigma\in\Omega(W)$ can be written uniquely as $\sigma=t(f_1)d'(\beta)t(f_2)$: we define
\begin{equation}\label{definrMp}
\r(\sigma)=\t(f_1)\d'(\beta)\t(f_2)
\end{equation}
that is equivalent to write $\r(\sigma)=(\sigma,\r_0(\sigma))$.

\bigskip
\noindent
Let $\sigma=\begin{pmatrix}\alpha&\beta\\ \gamma&\delta\end{pmatrix}$, $\sigma'=\begin{pmatrix}\alpha'&\beta'\\ \gamma'&\delta'\end{pmatrix}$ and $\sigma''=\begin{pmatrix}\alpha''&\beta''\\ \gamma''&\delta''\end{pmatrix}$ be in $\Omega(W)$ such that $\sigma\sigma'=\sigma''$. By Theorem \ref{teoremar} we have
\begin{equation}\label{formular}
\r(\sigma)\r(\sigma')=\gamma(f_0)\r(\sigma'')
\end{equation}
where $f_0$ is the non-degenerate quadratic form on $X$ associated to the symmetric isomorphism $-\beta^{-1}\beta''\beta'^{-1}$. 


\section{Fundamental properties of the Weil factor}
\noindent In this section we find the possible values of $\gamma(f)$ for every non-degenerate quadratic form $f$ over $F$. Proposition \ref{calgamma} gives a summation formula for $\gamma(f)$ and we use it to prove that $\gamma(n)=-1$ where $n$ is the reduced norm of the quaternion division algebra over $F$. In Theorem \ref{Witt} we see that $\gamma$ is a $R$-character of the Witt group of $F$. Moreover we already know by Proposition \ref{propgamma1} that $\gamma(f)^2=1$ if $F$ contains a square root of $-1$ and at the end of this section this is generalized by saying that, for any $F$, $\gamma(f)$ is a fourth root of unity in $R$.

\smallskip
\noindent For every positive integer $m$, we denote by $q_m$ the non-degenerate quadratic form $q_m(x)=\sum_{i=1}^m x_i^2$ defined on the $m$-dimensional vector space $F^m$.


\subsection{The quaternion division algebra over $F$}
In this paragraph we use some results on quaternion algebras over $F$ (\cite{Vig2}) to prove that if $\mathrm{char}(R)\neq 2$ the map $\gamma:Q^{nd}(X)\longrightarrow \Rx$ is non-trivial by means of a concrete example.

\bigskip

\noindent Let $Y,Z,T$ be three pro-$p$ groups such that there exists an exact sequence of continuous maps $1\longrightarrow Y \stackrel{i}{\longrightarrow} Z \stackrel{j}{\longrightarrow} T \longrightarrow 1$.
We say that the Haar measures $dy,dz,dt$ on $Y,Z,T$ with values in $R$ are \emph{compatible} with the exact sequence above (II.4 of \cite{Vig2}) if for every locally constant function $\vartheta:Z\longrightarrow R$ 
we have the equality
$$\int_Z\vartheta(z)dz=\int_T\left(\int_Y\vartheta(i(y)z)dy\right)dt.$$
The function $\int_Y\vartheta(i(y)z)dy$ is invariant by $z\mapsto i(y)z$ for every $y\in Y$ and so we can see it as a function in the variable $t$.

\bigskip
\noindent
By Theorem II.1.1 of \cite{Vig2} we know that there exists a unique quaternion division algebra over $F$ (up to isomorphism) that we denote by $A$. 
The reduced norm $n:A\longrightarrow F$ is a non-degenerate quadratic form on the $F$-vector space underlying $A$ and it induces a surjective group homomorphism $n_{|A^\times}:A^{\times}\longrightarrow F^{\times}$. Moreover by Lemma II.1.4 of \cite{Vig2}, if $v$ is a discrete valuation of $F$ such that $v(\varpi)=1$ then $v\circ n$ is a discrete valuation of $A$; so we can consider the ring of integers $\mathcal{O}_A=\{z\in A\,|\, n(z)\in\mathcal{O}_F\}$ of $A$ and fix a uniformizer $\varpi_A$ of $\mathcal{O}_A$ such that $\varpi_A^2=\varpi$. The unique prime ideal of $\mathcal{O}_A$ is $\varpi_A\mathcal{O}_A$ and the cardinality of the residue field of $A$ is $q^2$ where $q$ is the cardinality of the residue field of $F$.

\bigskip
\noindent According to Definition \ref{module}, we define the module of $x\in F$ (resp. $z\in A$), denoted by $|x|$ (resp. $|z|_A$), as the module of the multiplication (resp. right multiplication) by $x$ (resp. $z$). We can easily prove that $|x|=q^{-v(x)}$ and $|z|_A=|n(z)|^{2}$.

\bigskip
\noindent
We denote by $dx$ and $dz$ the Haar measures on $F$ and $A$ such that $\mathrm{vol}(\OF,dx)=\mathrm{vol}(\mathcal{O}_A,dz)=1$ and by $dx^{\times}=|x|^{-1}dx$ and $dz^{\times}=|n(z)|^{-2}dz$ the Haar measures on $F^{\times}$ and $A^{\times}$.\\
It is easy to see that $n_{|\mathcal{O}_A^{\times}}:\mathcal{O}_A^{\times}\longrightarrow \mathcal{O}_F^{\times}$ and $n_{|U^1_A}:U^1_A=1+\varpi_A\mathcal{O}_A\longrightarrow U^1_F=1+\varpi\mathcal{O}_F$ are two surjective group homomorphisms.
We observe that $\mathrm{vol}(U^1_A,dz^{\times})=\mathrm{vol}(U^1_A,dz)=\mathrm{vol}(\varpi_A\mathcal{O}_A,dz)=q^{-2}$ and $\mathrm{vol}(U^1_F,dx^{\times})=q^{-1}$. Now, if we take the Haar measure $dy$ on the kernel of $n_{|U^1_A}$ such that $\mathrm{vol}(\ker(n_{|U^1_A}),dy)=q^{-1}$, the Haar measures $dy, dz^{\times}, dx^{\times}$ are compatible with the exact sequence $1\longrightarrow \ker(n_{|U^1_A}) \longrightarrow U^1_A \stackrel{n}{\longrightarrow} U^1_F \longrightarrow 1$. Indeed, they are compatible if for every compact open subgroup $K$ of $U^1_A$ we have
\begin{equation}\label{compatibilità}
\mathrm{vol}(K,dz^{\times})=\int_{U^1_F}\bigg(\int_{\ker(n_{|U^1_A})}\I_K(yz)dy\bigg)dx^{\times}=\int_{U^1_F}\mathrm{vol}(Kz^{-1}\cap \ker(n),dy)dx^{\times}
\end{equation}
where the function $z\mapsto \mathrm{vol}(Kz^{-1}\cap \ker(n),dy)$ is invariant by $z\mapsto yz$ for every $y\in \ker(n_{|U^1_A})$ and so we can see it as a function in the variable $x\in U^1_F$. Now, if $n(z)\notin n(K)$ then $Kz^{-1}\cap \ker(n)=\emptyset$ while if $n(z)=n(k)$ with $k\in K$ then $Kz^{-1}\cap \ker(n)=(K\cap n^{-1}(n(z)))z^{-1}=n_{|K}^{-1}(n_{|K}^{}(k))z^{-1}=(K\cap\ker(n))k z^{-1}$. Then (\ref{compatibilità}) becomes $\mathrm{vol}(K,dz^{\times})=\mathrm{vol}(K\cap\ker(n),dy)\cdot\mathrm{vol}(n(K),dx^{\times})$ or equivalently $q^{-2}[U^1_A:K]=q^{-1}[\ker(n):K\cap\ker(n)]\cdot q^{-1}[U^1_F:n(K)]$ that is clearly true.

\begin{teor}\label{gamma-1}
Let $A$ be the quaternion division algebra over $F$ and let $n:A\longrightarrow F$ be the reduced norm of $A$. Then $\gamma(n)=-1$.
\end{teor}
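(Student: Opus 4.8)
The strategy is to evaluate $\gamma(n)$ directly from the summation formula of Proposition~\ref{calgamma}, part~(4), applied to $X=A$ and $f=n$: if $\rho=\rho(n)\in\Isom(A,A^*)$ denotes the symmetric isomorphism attached to $n$, then $\gamma(n)=|\rho|^{\frac12}\int_K\chi(n(z))\,dz$ for every sufficiently large compact open subgroup $K$ of $A$, and since $\gamma(n)$ is intrinsic the right-hand side is independent of the Haar measure $dz$ chosen on $A$. I will take $dz$ normalized by $\mathrm{vol}(\mathcal{O}_A,dz)=1$ and $K=\varpi_A^{-m}\mathcal{O}_A$ with $m$ large, and compute the two factors separately.

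First I evaluate $|\rho|$. By definition $\rho(x)$ is the linear form $y\mapsto n(x+y)-n(x)-n(y)=b(x,y)$, where $b$ is the reduced trace form of $A$; in particular $b$ is $\OF$-valued on $\mathcal{O}_A\times\mathcal{O}_A$, and the lattice $\{x\in A:b(x,\mathcal{O}_A)\subseteq\OF\}$ is the inverse different $\varpi_A^{-1}\mathcal{O}_A$ of $\mathcal{O}_A$ over $\OF$ (equivalently, the reduced discriminant of $A$ is $\varpi\OF$; see \cite{Vig2}). Hence the lattice of $A^*$ dual to $\mathcal{O}_A$ for the conductor-$l$ pairing $\langle\,\cdot\,,\,\cdot\,\rangle$ corresponds under $\rho$ to $\varpi^l\varpi_A^{-1}\mathcal{O}_A=\varpi_A^{2l-1}\mathcal{O}_A$; as the dual measure $dz^*$ is normalized so that this lattice has volume $1$ and the residue field of $A$ has $q^2$ elements, I get $|\rho|=q^{4l-2}$, so $|\rho|^{\frac12}=q^{2l-1}$.

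Next I compute $\int_K\chi(n(z))\,dz$. Stratifying $K=\varpi_A^{-m}\mathcal{O}_A$ by the valuation $v_A=v\circ n$ gives the finite decomposition $K=\varpi_A^l\mathcal{O}_A\sqcup\bigsqcup_{k=-m}^{l-1}\varpi_A^k\mathcal{O}_A^\times$. On $\varpi_A^l\mathcal{O}_A$ one has $v(n(z))\geq l$, hence $\chi\circ n\equiv 1$ there, so this piece contributes $\mathrm{vol}(\varpi_A^l\mathcal{O}_A,dz)=q^{-2l}$; it is crucial to obtain this directly and not as the series $\sum_{k\geq l}\mathrm{vol}(\varpi_A^k\mathcal{O}_A^\times,dz)=\sum_{k\geq l}q^{-2k}(1-q^{-2})$, whose naive summation would divide by $1-q^{-2}$ and is illegitimate when $q^2=1$ in $R$. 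On each annulus $\varpi_A^k\mathcal{O}_A^\times$ I substitute $z=\varpi_A^k w$ with $w\in\mathcal{O}_A^\times$, so the integral becomes $q^{-2k}\int_{\mathcal{O}_A^\times}\chi\big(n(\varpi_A^k)\,n(w)\big)\,dw$, and push it through the reduced norm using the surjections $\mathcal{O}_A^\times\twoheadrightarrow\OF^\times$ and $U^1_A\twoheadrightarrow U^1_F$ together with the compatible Haar measures fixed before the statement; since the residue norm $\mathbb{F}_{q^2}^\times\to\mathbb{F}_q^\times$ is surjective with fibres of cardinality $q+1$, the quaternionic integral reduces to $\tfrac{q+1}{q}\,q^{-2k}\int_{\varpi^k\OF^\times}\chi(s)\,|s|^{-1}\,ds$. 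This one-variable local integral is elementary: it vanishes for $k\leq l-2$, because a nontrivial additive character of a group sums to $0$ (here one uses that $R$ is an integral domain); it equals $-q^{-1}$ for $k=l-1$, since $\int_{\varpi^{l-1}\OF^\times}\chi(s)\,ds=\int_{\varpi^{l-1}\OF}\chi\,ds-\int_{\varpi^l\OF}\chi\,ds=0-q^{-l}$ and $|s|^{-1}\equiv q^{l-1}$ there; and for $k\geq l$ it merely reconstructs $\mathrm{vol}(\varpi_A^k\mathcal{O}_A^\times,dz)$, as it must. Collecting terms, only the stratum $k=l-1$ survives among the annuli, contributing $-(q+1)q^{-2l}$, so $\int_K\chi(n(z))\,dz=q^{-2l}-(q+1)q^{-2l}=-q^{1-2l}$, independently of $m$, in accordance with Proposition~\ref{calgamma}(3).

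Combining the two computations yields $\gamma(n)=q^{2l-1}\cdot(-q^{1-2l})=-1$. I expect the main delicate point to be exactly the case $q^2=1$ in $R$ flagged in the introduction: there every annulus $\varpi_A^k\mathcal{O}_A^\times$ has zero volume, so one must never form a geometric series and must instead read off the contribution of $\varpi_A^l\mathcal{O}_A$ and of the single stratum $k=l-1$ as honest integrals over compact sets, keeping $\mathrm{char}(R)\neq 2$ available throughout; a secondary point is to check that the conductor $l$ enters the normalization of the dual measure inside $|\rho|$ precisely so as to cancel the factor $q^{1-2l}$ coming from the integral, leaving $-1$.
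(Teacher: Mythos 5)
Your argument is correct and follows essentially the same route as the paper: the summation formula of Proposition \ref{calgamma}(4), the decomposition of $\varpi_A^{-m}\mathcal{O}_A$ into the ball $\varpi_A^{l}\mathcal{O}_A$ (handled directly, avoiding geometric series when $q^2=1$ in $R$) plus annuli pushed through the reduced norm via the compatible measures and the $q+1$ fibre count, and the computation $|\rho|=q^{4l-2}$ via the codifferent $\varpi_A^{-1}\mathcal{O}_A$. Only a cosmetic point: the normalization giving $|\rho|$ is that $(\mathcal{O}_A)_*$ has $dz^*$-volume $1$, so $|\rho|=\mathrm{vol}(\varpi_A^{2l-1}\mathcal{O}_A,dz)^{-1}$, which is what you compute.
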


\begin{proof} 
Let $\rho_n\in\Isom(A,A^*)$ be the symmetric isomorphism associated to the quadratic form $n$. For every $\lambda\in\Z$, let 
$M_{\lambda}=\varpi_A^{-\lambda}\mathcal{O}_A=\{z\in A\,|\,n(z)\in \varpi^{-\lambda}\OF\}$. 
By (\ref{calgamma3}) we have $$\gamma(n)=|\rho_n|^{\frac{1}{2}}\int_{M_{\lambda}}\chi(n(z))dz$$
for $\lambda\geq 0$ large enough. 
Let $l$ be the conductor of $\chi$; then $\int_{M_{-l}}\chi(n(z))dz=\mathrm{vol}(M_{-l})=\mathrm{vol}(\varpi_A^l\mathcal{O}_A)=q^{-2l}$. Hence, if we choose $\lambda\geq 1-l$, we obtain
\begin{align*}
\int_{M_{\lambda}}\chi(n(z))dz&=\mathrm{vol}(M_{-l})+\int_{M_{\lambda}- M_{-l}}\chi(n(z))dz
															=q^{-2l}+\sum_{i=-\lambda}^{l-1}\int_{\varpi_A^{i}\mathcal{O}_A^{\times}}\chi(n(z))dz\\
															&=q^{-2l}+\sum_{i=-\lambda}^{l-1}\int_{\varpi_A^{i}\mathcal{O}_A^{\times}}\chi(n(z))|n(z)|^2dz^{\times}
															=q^{-2l}+\sum_{i=-\lambda}^{l-1}q^{-2i}\int_{\varpi_A^{i}\mathcal{O}_A^{\times}}\chi(n(z))dz^{\times}.
\end{align*}
Now we fix a set of representatives $\Xi_A$ of $\mathcal{O}_A^{\times}/U^1_A$. Then, for every $-\lambda\leq i\leq l-1$, we have
$$\int_{\varpi_A^{i}\mathcal{O}_A^{\times}}\chi(n(z))dz^{\times}=\sum_{\xi\in\Xi_A}\int_{U^1_A}\chi(n(\varpi_A^{i}\xi z))dz^{\times}=\sum_{\xi\in\Xi_A}\int_{U^1_A}\chi((-\varpi)^{i}n(\xi)n(z))dz^{\times}.$$
We already know that $\mathrm{vol}(\ker(n_{|U^1_A}),dy)=q^{-1}$, so using the compatibility of $dy, dz^{\times}, dx^{\times}$ we obtain 
$$\int_{\varpi_A^{i}\mathcal{O}_A^{\times}}\chi(n(z))dz^{\times}=\sum_{\xi\in\Xi_A}q^{-1}\int_{U^1_F}\chi((-\varpi)^{i}n(\xi)x)dx^{\times}.$$
The morphisms $n_{|\mathcal{O}_A^{\times}}$ gives a surjective morphism $\mathcal{O}_A^{\times}/U^1_A\longrightarrow \mathcal{O}_F^{\times}/U^1_F$ which kernel is of order $q+1$. Then, if we fix a set of representatives $\Xi_F$ of $\mathcal{O}_F^{\times}/U^1_F$ we obtain
$$\int_{\varpi_A^{i}\mathcal{O}_A^{\times}}\chi(n(z))dz^{\times}=q^{-1}(q+1)\sum_{\xi'\in\Xi_F}\int_{U^1_F}\chi((-\varpi)^{i}\xi'x)dx^{\times}=
q^{-1}(q+1)\int_{\varpi^i\mathcal{O}_F^{\times}}\chi(x)dx^{\times}$$
and hence
\begin{align*}
\int_{M_{\lambda}-M_{-l}}\chi(n(z))dz&=q^{-1}(q+1)\sum_{i=-\lambda}^{l-1}q^{-2i}\int_{\varpi^{i}\mathcal{O}_F^{\times}}\chi(x)dx^{\times}=q^{-1}(q+1)\sum_{i=-\lambda}^{l-1}q^{-i}\int_{\varpi^i\mathcal{O}_F^{\times}}\chi(x)dx\\																		&=q^{-1}(q+1)\sum_{i=-\lambda}^{l-1}q^{-i}\left(\int_{\varpi^i\mathcal{O}_F}\chi(x)dx-\int_{\varpi^{i+1}\mathcal{O}_F}\chi(x)dx\right).
\end{align*}
We observe that we have $\int_{\varpi^{j}\OF}\chi(x)dx= q^{-j}$ if $j\geq l$ and $0$ otherwise. Indeed if $j<l$, then there exists $x'\in\varpi^{j}\OF$ such that $\chi(x')\neq 1$; hence $\int_{\varpi^{j}\OF}\chi(x)dx=\int_{\varpi^{j}\OF}\chi(x+x')dx=\chi(x')\int_{\varpi^{j}\OF}\chi(x)dx$ that implies $\int_{\varpi^{j}\OF}\chi(x)dx=0$. Then
$$\int_{M_{\lambda}}\chi(n(z))dz=q^{-2l}+q^{-1}(q+1)q^{-l+1}(-q^{-l})=q^{1-2l}.$$
Finally we obtain $\gamma(n)=-|\rho_n|^{\frac{1}{2}}q^{1-2l}$.  By Definition \ref{module}, where we set $\Phi=\I_{(\mathcal{O}_A)_*}$,  we have $$|\rho_n|=\mathrm{vol}(\rho^{-1}((\mathcal{O}_A)_*),dz)^{-1}.$$
Moreover it is easy to show that $\rho_n(z_1)(z_2)=tr(z_1\bar{z_2})$ for every $z_1,z_2\in A$, where $z\mapsto \bar{z}$ is the conjugation of $A$ (cf. page 1 of \cite{Vig2}). Then the following are equivalent
$$z\in \rho^{-1}((\mathcal{O}_A)_*) \Longleftrightarrow \la z,\rho(\mathcal{O}_A)\ra=1 \Longleftrightarrow tr(z\mathcal{O}_A)\subset\ker(\chi).$$
We know that $\{z\in A\,|\,tr(z\mathcal{O}_A)\subset\OF\}$ is a fractional ideal (its inverse is called codifferent ideal), and by Corollary II.1.7 of \cite{Vig2} it is exactly $\varpi_A^{-1}\mathcal{O}_A$. Then $z\in \rho^{-1}((\mathcal{O}_A)_*)$ if and only if $z\in \varpi^l\varpi_A^{-1}\mathcal{O}_A=\varpi_A^{2l-1}\mathcal{O}_A$. Hence $|\rho_n|=q^{4l-2}$ and $\gamma(n)=-1$.
\end{proof}
\begin{rmk}
The Theorem \ref{gamma-1} corresponds to Proposition 4 of \cite{Wei}. Weil proves it showing that $\gamma(n)$ is a negative real number of absolute value 1 and hence we need further remarks to conclude the proof. Calculating explicitly the volume $\mathrm{vol}(\rho^{-1}((\mathcal{O}_A)_*),dz)$ we could conclude for $R$ of characteristic zero.\\ In the general case we are no more sure that we are calculating integrals over subsets of non-zero volume so we can solve the problem making integrals over pro-$p$-groups.
\end{rmk}


\subsection{The Witt group}
In this paragraph we introduce the definition of Witt group of $F$ and we prove that $\gamma$ defines a $R$-character of this group.

\bigskip

\noindent 
Let $G_1, G_2$ be two finite dimensional vector spaces over $F$ and $f_1, f_2$ be two non-degenerate quadratic forms on $G_1$ and $G_2$. 
We define $f_1\oplus f_2 \in Q^{nd}(G_1\times G_2)$ by $(f_1\oplus f_2)(x_1\oplus x_2)=f_1(x_1)+f_2(x_2)$ for every $x_1\in G_1$ and $x_2\in G_2$.
 
\begin{rmk}
If $\rho_1:G_1\to G_1^*$ and $\rho_2: G_2\to G_2^*$ are the symmetric isomorphisms associated to $f_1$ and $f_2$, then $\rho_1\oplus\rho_2:G_1\times G_2\to (G_1\times G_2)^*$, defined by $(\rho_1\oplus\rho_2)(y_1 \oplus y_2)=\rho_1(y_1) \oplus \rho_2(y_2)$ is the symmetric isomorphism associated to $f_1\oplus f_2$. Indeed, calling this latter $\rho_{_{1,2}}$, we have $$[x_1\oplus x_2,(\rho_1\oplus\rho_2)(y_1\oplus y_2)]
= f_1(x_1+y_1)-f_1(x_1)-f_1(y_1)+f_2(x_2+y_2)-f_2(x_2)-f_2(y_2)=$$ $$=(f_1\oplus f_2)(x_1\oplus x_2+y_1\oplus y_2)-(f_1\oplus f_2)(x_1\oplus x_2)-(f_1\oplus f_2)(y_1\oplus y_2)=[x_1\oplus x_2,\rho_{_{1,2}}(y_1\oplus y_2)].$$
\end{rmk}
\begin{defin}
We say that $f_1\in Q^{nd}(G_1)$ and $f_2\in Q^{nd}(G_2)$ are \emph{equivalent} (and we write $f_1\sim f_2$) if one can be obtained from the other by adding an hyperbolic quadratic form of dimension $\max\{\dim(G_1), \dim(G_2)\}-\min\{\dim(G_1), \dim(G_2)\}$ (see \cite{MiHu}). We call \emph{Witt group} of $F$ the set of equivalence classes of non-degenerate quadratic forms over $F$ endowed with the operation induced by $(f_1,f_2)\longmapsto f_1\oplus f_2$.
\end{defin}

\begin{teor}\label{Witt}
The map $f \mapsto \gamma(f)$ is a $R$-character of the Witt group of F.
\end{teor}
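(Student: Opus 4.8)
The plan is to show that $\gamma$ is well-defined on equivalence classes and that it is multiplicative with respect to the operation $\oplus$; these two facts together say exactly that $\gamma$ descends to a group homomorphism (i.e. an $R$-character) from the Witt group to $\Rx$. First I would establish multiplicativity: given $f_1 \in Q^{nd}(G_1)$ and $f_2 \in Q^{nd}(G_2)$ with associated symmetric isomorphisms $\rho_1, \rho_2$, the preceding remark identifies $\rho_1 \oplus \rho_2$ as the symmetric isomorphism of $f_1 \oplus f_2$, so $|\rho_1 \oplus \rho_2| = |\rho_1|\cdot|\rho_2|$ by the product structure of the Haar measure on $G_1 \times G_2$. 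Then, applying the summation formula \eqref{calgamma3} from Proposition \ref{calgamma} on a product $K_1 \times K_2$ of sufficiently large compact open subgroups, one gets
\begin{align*}
\gamma(f_1 \oplus f_2) &= |\rho_1 \oplus \rho_2|^{\frac{1}{2}}\int_{K_1 \times K_2} \chi\big((f_1\oplus f_2)(x_1,x_2)\big)\,dx_1 dx_2\\
&= |\rho_1|^{\frac{1}{2}}|\rho_2|^{\frac{1}{2}}\left(\int_{K_1}\chi(f_1(x_1))\,dx_1\right)\left(\int_{K_2}\chi(f_2(x_2))\,dx_2\right) = \gamma(f_1)\gamma(f_2),
\end{align*}
since $\chi$ is a homomorphism and the double integral is a finite sum that factors. (Here I use the earlier observation that a large enough product subgroup in $G_1\times G_2$ can be taken of the form $K_1\times K_2$, and that enlarging $K_i$ does not change the partial integrals by part 3 of Proposition \ref{calgamma}.)

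Next I would check compatibility with the equivalence relation, which reduces by multiplicativity to the single statement $\gamma(h) = 1$ for $h$ a hyperbolic plane, i.e. the quadratic form $h(x,y) = xy$ on $F^2$ (or equivalently $h'(x) = x_1^2 - x_2^2$, up to a linear change of variables, which does not affect $\gamma$ by Proposition \ref{propgamma1}(ii) / Remark \ref{propgamma3}). For the hyperbolic plane, I would compute $\gamma(h)$ directly from \eqref{calgamma3}: with $h(x,y) = xy$ the associated $\rho$ is (antidiagonal, hence) an isomorphism with $|\rho| = 1$ in a suitably normalized measure, and
\[
\int_{\varpi^{-N}\OF \times \varpi^{-N}\OF} \chi(xy)\,dx\,dy = \int_{\varpi^{-N}\OF}\left(\int_{\varpi^{-N}\OF}\chi(xy)\,dy\right)dx.
\]
The inner integral vanishes unless $x \varpi^{-N}\OF \subset \ker\chi$, i.e. unless $x \in \varpi^{N+l}\OF$, in which case it equals $\mathrm{vol}(\varpi^{-N}\OF) = q^N$; summing the contribution of $x \in \varpi^{N+l}\OF$ gives $q^{-N-l}\cdot q^N = q^{-l}$, and after accounting for the normalization of $|\rho|^{1/2}$ relative to the chosen measures one obtains $\gamma(h) = 1$. (This mirrors the volume bookkeeping already carried out in the proof of Theorem \ref{gamma-1}.) Having $\gamma(h)=1$, any two equivalent forms $f_1, f_2$ satisfy $\gamma(f_1) = \gamma(f_1)\gamma(h^{\oplus k}) = \gamma(f_1 \oplus h^{\oplus k}) = \gamma(f_2 \oplus h^{\oplus j}) = \gamma(f_2)$ once we add enough hyperbolic planes on each side to reach forms that are isometric, and $\gamma$ is an isometry invariant by Proposition \ref{propgamma1}(ii) and Remark \ref{propgamma3}.

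Assembling the pieces: $\gamma$ is constant on Witt equivalence classes, so it induces a well-defined map on the Witt group; it sends $\oplus$ to multiplication in $\Rx$, so it is a group homomorphism; hence it is an $R$-character. The main obstacle I anticipate is the careful normalization of Haar measures in the multiplicativity step — one must make sure the square-root-of-module factor $|\rho|^{1/2}$ and the "sufficiently large $K$" in \eqref{calgamma3} behave correctly under taking products, and that the choice of $q^{1/2}$ in $R$ is used consistently — but this is exactly the same type of bookkeeping already validated in Proposition \ref{calgamma} and Theorem \ref{gamma-1}, so no genuinely new difficulty arises. A secondary point to handle with care is that the definition of equivalence as stated compares forms of possibly different dimensions by adding hyperbolic forms only on the smaller side; reducing the general comparison to the isometric case needs the standard Witt cancellation fact, which over $F$ is classical (\cite{MiHu}), or can be bypassed by noting that in the Witt group every class has an inverse represented by $-f$ and using $\gamma(-f) = \gamma(f)^{-1}$ from Proposition \ref{propgamma1}(i).
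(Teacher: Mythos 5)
Your proposal is correct and follows the same skeleton as the paper's proof: multiplicativity of $\gamma$ via the summation formula (\ref{calgamma3}) together with $|\rho_1\oplus\rho_2|=|\rho_1||\rho_2|$ (which the paper verifies by a short computation with characteristic functions and Definition \ref{module}, rather than just invoking the product structure), followed by the reduction of well-definedness on Witt classes to the single identity $\gamma(h_2)=1$ for the rank-two hyperbolic form. The only genuine divergence is how $\gamma(h_2)=1$ is obtained: the paper applies the change of variables $(x_1,x_2)\mapsto(x_1+x_2,x_1-x_2)$, so that $h_2$ becomes $q_1\oplus(-q_1)$ and $\gamma(h_2)=\gamma(q_1)\gamma(q_1)^{-1}=1$ by Proposition \ref{propgamma1} and multiplicativity --- the route you mention only parenthetically --- whereas your primary argument is a direct evaluation via (\ref{calgamma3}). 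That computation does close, but your normalization remark is off as stated: with $\mathrm{vol}(\OF^2,dx)=1$ and the dual measure on $(F^2)^*$, the symmetric isomorphism attached to $x_1x_2$ has module $q^{2l}$ (not $1$), and the factor $|\rho|^{\frac{1}{2}}=q^{l}$ exactly compensates the value $q^{-l}$ of your double integral, giving $\gamma(h_2)=1$; the paper's diagonalization trick avoids this measure bookkeeping entirely. Finally, the appeal to Witt cancellation is unnecessary: with the paper's definition of equivalence ($f_1$ obtained from $f_2$ by adding a hyperbolic form, up to isometry), constancy on classes follows directly from multiplicativity, $\gamma(h_2)=1$, and isometry invariance (Remark \ref{propgamma3}).
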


\begin{proof}
Let $G_1$ and $G_2$ be two finitely dimensional vector spaces over $F$, $f_1 \in Q^{nd}(G_1)$ and $f_2\in Q^{nd}(G_2)$. Proposition \ref{calgamma} gives
$$\gamma(f_1\oplus f_2)=|\rho_1 \oplus \rho_2|^{\frac{1}{2}}\int_{K_1\times K_2} \chi((f_1\oplus f_2)(x_1\oplus x_2))dx_1dx_2$$
for compact open subgroups $K_1$ and $K_2$ of $G_1$ and $G_2$, both large enough.
Now, if we consider $\I_{K_{1,*}}\in\S(G_1^*)$, $\I_{K_{2,*}}\in\S(G_2^*)$ and $\I_{K_{1,*}\times K_{2,*}}\in\S(G_1^*\times G_2^*)$, Definition \ref{module} gives
$$|\rho_1||\rho_2|\int_{G_1}\I_{K_{1,*}}(\rho_1(x_1))dx_1  \int_{G_2}\I_{K_{2,*}}(\rho_2(x_2))dx_2=\int_{G_1^*}\I_{K_{1,*}}(x_1^*)dx_1^*\int_{G_2^*}(x_2^*)\I_{K_{2,*}}dx_2^*=$$
$$=\int_{G_1^{*}\times G_2^{*}} \I_{K_{1,*}\times K_{2,*}}(x_1^*\oplus x_2^*)dx_1^* dx_2^* = |\rho_1\oplus \rho_2|\int_{G_1\times G_2} \I_{K_{1,*}\times K_{2,*}}(\rho_1(x_1)\oplus \rho_2(x_2)) dx_1dx_2$$
and then $|\rho_1||\rho_2|=|\rho_1\oplus \rho_2|$. Hence we obtain 
$$\gamma(f_1\oplus f_2)=|\rho_1|^{\frac{1}{2}}|\rho_2|^{\frac{1}{2}} \int_{K_1} \chi(f_1(x_1))dx_1\int_{K_2} \chi(f_2(x_2)) dx_2=\gamma(f_1)\gamma(f_2).$$
We shall now to check that $\gamma$ is equivariant on the equivalence classes of bilinear forms. To see that, recall that $f_1\sim f_2$ if and only if there exist $n\in \N$ and an hyperbolic quadratic form $h(\mathbf{x})=\sum x_ix_{i+n}$ of rank $2n$ such that $f_1=f_2\oplus h$. After what proven in the first part $\gamma(f_1)=\gamma(f_2)$ if and only if $\gamma(h)=1$ and since every hyperbolic form is a sum of the rank 2 form $h_2:(x_1, x_2)\mapsto x_1x_2$ it's sufficient to show that $\gamma(h_2)=1$. Now, if we apply the base change $x_1\mapsto x_1+x_2$ and $x_2\mapsto x_1-x_2$ we obtain $h_2(x_1+x_2, x_1-x_2)=(x_1+x_2)(x_1-x_2)=x_1^2-x_2^2$ and Proposition \ref{propgamma1} gives that $\gamma(h_2)=\gamma(q_1\oplus (-q_1)) = \gamma(q_1)\gamma(q_1)^{-1}=1$. 
\end{proof}


\subsection{The image of the Weil factor}
We exploit some classical results on quadratic forms over $F$ to prove that $\gamma$ takes values in the group of fourth roots of unity in $R$. 

\begin{defin}
Let $G_1,G_2$ be two finite dimensional vector spaces over $F$ and $f_1,f_2$ be two non-degenerate quadratic forms on $G_1$ and $G_2$. We say that $f_1$ and $f_2$ are \emph{isometric} if there exists an isomorphism $\vartheta:G_1\longrightarrow G_2$ such that $f_1(x)=f_2(\vartheta(x))$ for every $x\in G_1$.
\end{defin} 
\noindent
Notice that, by Remark \ref{propgamma3}, if $f_1$ and $f_2$ are isometric then $\gamma(f_1)=\gamma(f_2)$. We know also that there are only two isometry classes of non-degenerate quadratic forms on a $4$-dimensional vector space over $F$ whose discriminant is a square in $F^{\times}$. One class is represented by the norm $n$ over the quaternion division algebra and the other by $q_2\oplus -q_2$. Moreover, if $a,b\in F^{\times}$ and $\left(\frac{a}{b}\right)$ is the Hilbert symbol with values in $\Rx$, the quadratic form $x_1^2-ax_2^2-bx_3^2+abx_4^2$ lies in the first class if $\left(\frac{a}{b}\right)=-1$ and in the second one if $\left(\frac{a}{b}\right)=1$. Furthermore by Theorems \ref{Witt} and \ref{gamma-1} we have that 
\begin{equation}\label{formulagamma0}
\gamma(x_1^2-ax_2^2-bx_3^2+abx_4^2)=\left(\frac{a}{b}\right).
\end{equation}
In particular, for $b=-1$ we apply Theorem \ref{Witt} to this formula to get the equalities $$\gamma(q_1)^2\gamma(-aq_1)^2=\left(\frac{a}{-1}\right) \quad \text{ and } \quad \gamma(aq_1)^2=\left(\frac{a}{-1}\right)\gamma(q_1)^2$$ by Proposition \ref{propgamma1}. Since every non-degenerate quadratic form is isometric to $\sum_{i=1}^m a_ix_i^2$ for suitable $m\in\N$ and $a_i\in F^{\times}$, we have
\begin{equation}\label{formulagamma}
\gamma(f)^2=\prod_{i=1}^m\left(\frac{a_i}{-1}\right)\gamma(q_1)^2=\left(\frac{D(f)}{-1}\right)\gamma(q_1)^{2m}
\end{equation}
where $D(f)$ is the discriminant of $f$. Notice that, since $F$ is non-archimedean, then $-1$ is either a square or a norm in $F(\sqrt{-1})$. Therefore $\gamma(q_4)=\left(\frac{-1}{-1}\right)=1$ and it follows that $\gamma(f)^4=1$ for every non-degenerate quadratic form $f$ over $F$ as announced.\\
This is in fact the best possible result whenever $-1$ is not a square in $F$. Indeed, in this case, there exists at least an element $a\in F^\times$ such that $\left(\frac{a}{-1}\right)=-1$. 
For such an $a$, formula (\ref{formulagamma0}) gives $\gamma(q_1\oplus-aq_1)^2=-1$ 
and then a square root of $-1$ shall be in the image of $\gamma$.
\begin{rmk}
This result shows also that, whenever $-1$ is not a square in $F$ and $\mathrm{char}(R)\neq 2$ (in which case $X^4-1$ is a separable polynomial) then $R$ contains a primitive fourth root of unity. This fact has an elementary explaination: denote $\zeta_p$ an element of order $p$ in $R^\times$ and consider the Gauss sum $\tau=\sum_{i=1}^{p-1} \left(\frac{i}{p}\right) \zeta_p^i\in R$, where in this case $\left(\frac{i}{p}\right)$ is the Legendre symbol. The formula $$\tau^2=\left(\frac{-1}{p}\right)p$$
holds thanks to a classical argument that can be found, for example, in 3.3 of \cite{Lem}. The fact that $-1$ is not a square in $F$ implies that $\left(\frac{-1}{p}\right)=-1$ and that $q=p^f$ with $f$ odd. Since $R$ contains a square root of $q$, then there exists an element $x\in \Rx$ such that $x^2=p$ and $(\tau\cdot\frac{1}{x})^2=-1$: there is a primitive fourth root of unity in $R$.
\end{rmk}


\section{The reduced metaplectic group}

The metaplectic group, associated with $R$ and $\chi$, is an extension of $\Sp$ by $R^\times$ through the short exact sequence (\ref{ses2}). 
We want to understand when this sequence does (or does not) split, looking for positive numbers $n\in \N$ yielding the existence of subgroups $\Mpn{n}$ of $\Mp$ such that $\pi_{|\Mpn{n}}$ is a finite cyclic cover of $\Sp$ with kernel $\mu_n(R)$. 
We show that, for $F$ locally compact non-discrete non-archimedean field, it is possible to construct $\Mpn{2}$. Then we prove that, when $\mathrm{char}(R)\neq2$, $n=1$ does not satisfy the condition above, namely that the sequence (\ref{ses2}) does not split. Finally we show what happens in the simpler case when $\mathrm{char}(R)=2$.

\smallskip
\noindent
For a closer perspective we suppose that, for some $n\in \N$, $\Mpn{n}$ exists and we look at the following commutative diagram with exact rows and columns
$$
\xymatrix{  & 1 \ar[d] & 1 \ar[d] & \\
               1 \ar[r] & \mu_n(R) \ar[r]\ar[d] & \Mpn{n}\ar[r]\ar[d] &\Sp \ar[r]\ar[d]^{id}& 1\\
               1 \ar[r] & R^\times\ar[r]\ar[d]^{\cdot^n} & \Mp\ar[r]^{\pi}\ar[d]^{\psi_n} & \Sp\ar[r] & 1\\
                & R^\times \ar[r]^{id} & R^\times & }
$$
where $\mu_n(R)$ is the group of $n$-th roots of unity in $R$. 
The existence of a homomorphism $\psi_n:\Mp\longrightarrow \Rx$ such that its restriction on $\Rx$ is the $n$-th power map implies the existence of the first line in the diagram. Indeed, if such $\psi_n$ exists, let $\Mpn{n}$ be its kernel; then $\pi$ induces a surjective homomorphism from $\Mpn{n}$ to $\Sp$ whose kernel is $\Mpn{n}\cap \Rx=\mu_n(R)$.\\ Then, as in 43 of \cite{Wei}, the question to address is whether or not there exists $\psi_n: \Mp\to R^\times$ such that $\psi_{n|_{R^\times}}(x)=x^n$ for every $x\in R^\times$.

\begin{lemma}\label{psitilda}
A $R$-character $\psi_n:\Mp\longrightarrow \Rx$ whose restriction on $\Rx$ is the $n$-th power map is completely determined by 
$\widetilde{\psi_n}=\psi_n\circ \r:\Omega(W)\longrightarrow \Rx$ where $\r$ is as in (\ref{definrMp}).
\end{lemma}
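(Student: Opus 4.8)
The plan is to observe that $\Mp$ is generated, as an abstract group, by the kernel $\Rx$ of $\pi$ together with the subset $\r(\Omega(W))$, and then to read off the value of any admissible $\psi_n$ on an arbitrary element of $\Mp$ purely in terms of $\widetilde{\psi_n}$ and the $n$-th power map.

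First I would record the two ingredients. On one hand, $\pi\circ\r=\mathrm{id}_{\Omega(W)}$: this is immediate from the description $\r(\sigma)=(\sigma,\r_0(\sigma))$ following (\ref{definrMp}), since $\pi(\sigma,\mathbf{s})=\sigma$. On the other hand, by Proposition \ref{lemma6} every $\sigma\in\Sp$ admits a finite factorization $\sigma=\sigma_1\cdots\sigma_k$ with each $\sigma_i\in\Omega(W)$.

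Now, given $m\in\Mp$, choose such a factorization $\pi(m)=\sigma_1\cdots\sigma_k$ and set $m'=\r(\sigma_1)\cdots\r(\sigma_k)\in\Mp$. Since $\pi$ is a homomorphism and $\pi\circ\r=\mathrm{id}$, we get $\pi(m')=\sigma_1\cdots\sigma_k=\pi(m)$, hence $m\,(m')^{-1}\in\ker\pi$. By the exactness of (\ref{ses2}) the kernel of $\pi$ is $\{(\mathrm{id},t\cdot\mathrm{id}_{\S(X)})\;|\;t\in\Rx\}$, so $m=t\cdot\r(\sigma_1)\cdots\r(\sigma_k)$ for some $t\in\Rx$. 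Applying the character $\psi_n$ and using that its restriction to $\Rx$ is $x\mapsto x^n$, we obtain
$$\psi_n(m)=t^{\,n}\prod_{i=1}^k\widetilde{\psi_n}(\sigma_i).$$
The right-hand side involves only the power map and $\widetilde{\psi_n}$, so if two such characters $\psi_n$ and $\psi_n'$ satisfy $\widetilde{\psi_n}=\widetilde{\psi_n'}$ they agree on every $m\in\Mp$; that is, $\psi_n$ is completely determined by $\widetilde{\psi_n}$.

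The argument is essentially formal and I do not expect a genuine obstacle. The only points requiring a little care are that $\r$ is merely a set-theoretic section, not a homomorphism, so one must use only $\pi\circ\r=\mathrm{id}$ and the generation of $\Sp$ by $\Omega(W)$ rather than any multiplicativity of $\r$; and that the factorization $m=t\,\r(\sigma_1)\cdots\r(\sigma_k)$ is highly non-unique — but uniqueness is not needed, since once $\widetilde{\psi_n}$ and the $n$-th power map are fixed the displayed formula yields the same value $\psi_n(m)$ regardless of the chosen factorization, because $\psi_n$ is itself a well-defined function on $\Mp$.
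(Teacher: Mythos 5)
Your argument is correct and is essentially the proof given in the paper: both decompose $\pi(m)$ into a product of elements of $\Omega(W)$ via Proposition \ref{lemma6}, write $m=t\,\r(\sigma_1)\cdots\r(\sigma_k)$ with $t\in\Rx$ using $\ker(\pi)=\Rx$, and conclude $\psi_n(m)=t^n\prod_i\widetilde{\psi_n}(\sigma_i)$. Your added remarks (that $\r$ is only a set-theoretic section and that non-uniqueness of the factorization is harmless since $\psi_n$ is already well defined) merely make explicit points the paper leaves implicit.
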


\begin{proof}
Let $(\sigma,\s)\in\Mp$. By Proposition \ref{lemma6} we can write $\sigma$ as a product $\sigma=\prod_i \sigma_i$ with $\sigma_i\in\Omega(W)$. We set $(\sigma,\s')=\prod_i \r(\sigma_i)$ where $\r$ is as in (\ref{definrMp}). Then, since $\ker(\pi)=\Rx$, we have that $(\sigma,\s)=c(\sigma,\s')$ for a suitable $c\in\Rx$. This implies that the values of $\psi_n$ at $(\sigma,\s)$ is $\psi_n(c(\sigma,\s'))=c^n\prod_i \widetilde{\psi_n}(\sigma_i)$.
\end{proof}

\noindent
By (\ref{formular}), the morphism $\widetilde{\psi_n}$ of Lemma \ref{psitilda} shall verify the condition 
\begin{equation}\label{relpsi}
\widetilde{\psi_n}(\sigma)\widetilde{\psi_n}(\sigma')=\gamma(f_0)^{n}\widetilde{\psi_n}(\sigma'')
\end{equation}
for every $\sigma=\begin{pmatrix}\alpha&\beta\\ \gamma&\delta\end{pmatrix}$, $\sigma'=\begin{pmatrix}\alpha'&\beta'\\ \gamma'&\delta'\end{pmatrix}$ and $\sigma''=\begin{pmatrix}\alpha''&\beta''\\ \gamma''&\delta''\end{pmatrix}$ in $\Omega(W)$ satisfying $\sigma''=\sigma\sigma'$, where $f_0$ is a non-degenerate quadratic form on $X$ associated to the symmetric isomorphism $-\beta^{-1}\beta''\beta'^{-1}$.
\noindent Conversely we have:

\begin{lemma}\label{omsuf}
If $\widetilde{\psi_n}:\Omega(W)\longrightarrow \Rx$ satisfies (\ref{relpsi}), then there exists a unique $R$-character $\psi_n$ of $\Mp$ such that its restriction to $\Rx$ is the $n$-th power map and $\psi_n\circ \r=\widetilde{\psi_n}$.
\end{lemma}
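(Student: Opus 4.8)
The plan is to build $\psi_n$ directly from $\widetilde{\psi_n}$ using the presentation of $\Sp$ by generators and relations from Proposition \ref{lemma6}, together with the description of $\Mp$ as the fiber product $\Sp\times_\B\BB$ and the fact that $\ker(\pi)=\Rx$. First I would observe that every $(\sigma,\s)\in\Mp$ can be written $(\sigma,\s)=c\cdot\prod_i\r(\sigma_i)$ for some factorization $\sigma=\prod_i\sigma_i$ with $\sigma_i\in\Omega(W)$ and a unique $c\in\Rx$ (unique because $\r(\prod_i\sigma_i)$ has second coordinate lifting $\mu(\sigma)$, and two lifts differ by a scalar), and then necessarily set
\begin{equation*}
\psi_n(\sigma,\s)=c^n\prod_i\widetilde{\psi_n}(\sigma_i).
\end{equation*}
By Lemma \ref{psitilda} this is the only possible definition, so the whole content is \emph{well-definedness}: that the right-hand side does not depend on the chosen factorization $\sigma=\prod_i\sigma_i$.

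For well-definedness I would argue as follows. Fix $\sigma\in\Sp$ and suppose $\prod_i\sigma_i=\prod_j\sigma'_j$ are two expressions as products of elements of $\Omega(W)$. By Proposition \ref{lemma6}, the defining relations of $\Sp$ on the generating set $\Omega(W)$ are exactly the relations $\sigma\sigma'=\sigma''$ valid in $\Sp$ among triples from $\Omega(W)$; hence one word can be transformed into the other by a finite sequence of such elementary moves (replacing an adjacent pair $\sigma\sigma'$ occurring as a subword, both factors and their product lying in $\Omega(W)$, by the single letter $\sigma''$, and vice versa). It therefore suffices to check that the quantity $\prod_i\widetilde{\psi_n}(\sigma_i)$ — more precisely the scalar-normalized element $\prod_i\r(\sigma_i)$ paired with the product $\prod_i\widetilde{\psi_n}(\sigma_i)$ — is preserved under a single such move. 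But that is precisely the content of relation (\ref{relpsi}): if $\sigma,\sigma',\sigma''\in\Omega(W)$ with $\sigma''=\sigma\sigma'$, then by (\ref{formular}) we have $\r(\sigma)\r(\sigma')=\gamma(f_0)\r(\sigma'')$ in $\Mp$, so replacing the subword $\r(\sigma)\r(\sigma')$ by $\gamma(f_0)\r(\sigma'')$ leaves the element of $\Mp$ unchanged while multiplying the accumulated scalar factor by $\gamma(f_0)$ and the accumulated value of $\widetilde{\psi_n}$ by $\widetilde{\psi_n}(\sigma)\widetilde{\psi_n}(\sigma')\widetilde{\psi_n}(\sigma'')^{-1}=\gamma(f_0)^n$ — and $c^n$ absorbs exactly this factor $\gamma(f_0)^n$. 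Thus $\psi_n(\sigma,\s)$ is independent of the factorization, so $\psi_n$ is well defined.

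It then remains to verify the three asserted properties. The identity $\psi_n\circ\r=\widetilde{\psi_n}$ is immediate: for $\sigma\in\Omega(W)$ take the length-one factorization, giving $c=1$. The restriction to $\Rx$ being the $n$-th power map follows by taking $\sigma=\mathrm{id}$ (the empty product, $c$ being the scalar itself), since the copy of $\Rx$ in $\Mp$ consists of the pairs $(\mathrm{id},t\cdot\mathrm{id}_{\S(X)})$. For multiplicativity, given $(\sigma,\s)=c\prod_i\r(\sigma_i)$ and $(\tau,\mathbf t)=c'\prod_j\r(\tau_j)$, concatenating the factorizations gives an expression for the product $(\sigma\tau,\s\mathbf t)=cc'\prod_i\r(\sigma_i)\prod_j\r(\tau_j)$, and by well-definedness $\psi_n$ evaluated on it is $(cc')^n\prod_i\widetilde{\psi_n}(\sigma_i)\prod_j\widetilde{\psi_n}(\tau_j)=\psi_n(\sigma,\s)\psi_n(\tau,\mathbf t)$; uniqueness was already recorded in Lemma \ref{psitilda}. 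The main obstacle is the well-definedness step, and within it the only subtlety is to set up the bookkeeping of the scalar $c$ carefully enough that applying an elementary relation move visibly changes $c$ by $\gamma(f_0)$ and $\prod\widetilde{\psi_n}(\cdot)$ by $\gamma(f_0)^n$ in a compensating way; once the ledger is written down the verification is a direct consequence of (\ref{formular}) and (\ref{relpsi}).
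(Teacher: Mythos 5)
Your proposal is correct and follows essentially the same route as the paper: define $\psi_n(\sigma,\s)=c^n\prod_i\widetilde{\psi_n}(\sigma_i)$ from a factorization $\sigma=\prod_i\sigma_i$ in $\Omega(W)$, and use Proposition \ref{lemma6} together with (\ref{formular}) and (\ref{relpsi}) to check that an elementary relation move changes $c$ by $\gamma(f_0)$ and the product of $\widetilde{\psi_n}$-values by $\gamma(f_0)^{-n}$, so the definition is independent of the factorization. Your write-up is in fact slightly more complete than the paper's, since you also spell out the verification of multiplicativity, the restriction to $\Rx$, and $\psi_n\circ\r=\widetilde{\psi_n}$, which the paper leaves implicit.
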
 

\begin{proof}
Let $(\sigma,\s)\in\Mp$. By Proposition \ref{lemma6} we can write $\sigma$ as a product $\sigma=\prod_i \sigma_i$ with $\sigma_i\in\Omega(W)$ and $(\sigma,\s)=c\prod \r(\sigma_i)$ for a suitable $c\in\Rx$. We define $\psi_n(\sigma,\s)=c^n\prod_i \widetilde{\psi_n}(\sigma_i)$. We have to prove that it is well defined. Let $\sigma=\prod_j \sigma_j$ be another presentation of $\sigma$ that differs from $\prod_i\sigma_i$ by a single relation $\sigma\sigma'=\sigma''$; by (\ref{formular}) we obtain 
$$(\sigma,\s)=c\prod_i \r(\sigma_i)=\gamma(f_0)c\prod_j \r(\sigma_j)$$
for a suitable $f_0\in Q^{nd}(X)$ and by (\ref{relpsi}) we have $$\psi_n(\sigma,\s)=c^n\prod_i \widetilde{\psi_n}(\sigma_i)=c^n\gamma(f_0)^n\prod_j \widetilde{\psi_n}(\sigma_j)=(c\,\gamma(f_0))^n\prod_j \widetilde{\psi_n}(\sigma_j).$$
Now, since every presentation $\sigma=\prod_k \sigma_k$ with $\sigma_k\in\Omega(W)$ differs from $\prod_i\sigma_i$ by a finite number of relations $\sigma\sigma'=\sigma''$, the definition $\psi_n(\sigma,\s)=c^n\prod_i \widetilde{\psi_n}(\sigma_i)$ makes sense.
\end{proof}

\noindent After these results the existence of a character $\psi_n$, and then of a subgroup $\Mpn{n}$ of $\Mp$ as above, is equivalent to the existence of $\widetilde{\psi_n}:\Omega(W)\longrightarrow \Rx$ that satisfies (\ref{relpsi}). 

\bigskip
\noindent
First of all we suppose that $-1$ is a square in $F$. 
By Proposition \ref{propgamma1} we have $\gamma(f)^2=1$ for every $f\in Q^{nd}(X)$ and so  $\widetilde{\psi_2}=1$ satisfies (\ref{relpsi}) with $n=2$.

\smallskip

\noindent
We suppose now that $-1$ is not a square in $F$. We fix a basis over the $F$-vector space $X$ and its dual basis over $X^*$. By definition of $\Omega(W)$ we have that the determinant $\det(\beta)$ of $\beta$ with respect to these basis is not zero for every $\sigma=\begin{pmatrix}\alpha&\beta\\ \gamma&\delta\end{pmatrix}\in\Omega(W)$. Moreover, since $f_0$ is associated to the symmetric isomorphism $-\beta^{-1}\beta''\beta'^{-1}$ we have that the discriminant of $f_0$ is $D(f_0)=\det(-\beta)^{-1}\cdot\det(-\beta'')\cdot\det(-\beta')^{-1}$. Hence taking $$\widetilde{\psi_2}(\sigma)=\left(\frac{\det(-\beta)}{-1}\right)\gamma(q_1)^{2m}$$
for every $\sigma=\begin{pmatrix}\alpha&\beta\\ \gamma&\delta\end{pmatrix}\in\Omega(W)$ and using formula (\ref{formulagamma}) we obtain the equality (\ref{relpsi}) with $n=2$.

\noindent We have then proved the 
\begin{teor}\label{rmg} 
There exists a subgroup $\Mpn{2}$ of $\Mp$ that is a cover of $\Sp$ with kernel $\mu_2(R)$. In particular, when $\mathrm{char}(R)\neq 2$, $\Mpn{2}$ is a $2$-cover of $\Sp$.
\end{teor}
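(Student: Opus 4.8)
The plan is to apply Lemma~\ref{omsuf}, which reduces the construction of $\Mpn{2}$ to producing a single function $\widetilde{\psi_2}\colon\Omega(W)\to\Rx$ satisfying the relation (\ref{relpsi}) with $n=2$: given such a $\widetilde{\psi_2}$, Lemma~\ref{omsuf} yields an $R$-character $\psi_2\colon\Mp\to\Rx$ whose restriction to $\Rx$ is $x\mapsto x^2$, and one takes $\Mpn{2}:=\ker\psi_2$. I would construct $\widetilde{\psi_2}$ by splitting into two cases according to whether $-1$ is a square in $F$.

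If $-1\in(F^\times)^2$, then Proposition~\ref{propgamma1} gives $\gamma(f)^2=1$ for every $f\in Q^{nd}(X)$, so (\ref{relpsi}) with $n=2$ becomes $\widetilde{\psi_2}(\sigma)\widetilde{\psi_2}(\sigma')=\widetilde{\psi_2}(\sigma'')$ and is satisfied by the constant function $\widetilde{\psi_2}\equiv 1$. If $-1\notin(F^\times)^2$, I would fix a basis of $X$ together with the dual basis of $X^*$, put $m=\dim_FX$, and set $\widetilde{\psi_2}(\sigma)=\left(\frac{\det(-\beta)}{-1}\right)\gamma(q_1)^{2m}$ for $\sigma=\begin{pmatrix}\alpha&\beta\\\gamma&\delta\end{pmatrix}\in\Omega(W)$, where $\det\beta\neq0$ by definition of $\Omega(W)$ and the symbol is the Hilbert symbol with values in $\Rx$. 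This is independent of the chosen basis, since a change of basis multiplies $\det\beta$ by a square, which the Hilbert symbol annihilates.

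To verify (\ref{relpsi}) with $n=2$ for $\sigma''=\sigma\sigma'$ in $\Omega(W)$, I would recall that the form $f_0$ occurring there is attached to the symmetric isomorphism $-\beta^{-1}\beta''\beta'^{-1}$, so that modulo squares $D(f_0)=\det(-\beta)^{-1}\det(-\beta'')\det(-\beta')^{-1}$; substituting $\gamma(f_0)^2=\left(\frac{D(f_0)}{-1}\right)\gamma(q_1)^{2m}$ from (\ref{formulagamma}) into both sides of (\ref{relpsi}) and using the multiplicativity of the Hilbert symbol in its first variable, together with the fact that its values are $\pm1$, collapses the required identity to a tautology. This is the one step that involves genuine computation; it rests entirely on (\ref{formulagamma}), hence on Theorem~\ref{Witt} and Theorem~\ref{gamma-1}, and I expect it to be the main (though not difficult) obstacle.

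With $\psi_2$ available I would conclude as in the diagram preceding the statement. Put $\Mpn{2}=\ker\psi_2$. Then $\pi$ restricts to a homomorphism $\Mpn{2}\to\Sp$ whose kernel is $\Mpn{2}\cap\Rx=\ker(x\mapsto x^2)=\mu_2(R)$. For surjectivity it suffices, since $\Omega(W)$ generates $\Sp$, to correct each $\r(\sigma)$ with $\sigma\in\Omega(W)$ by an element of $\Rx$ so that it lands in $\ker\psi_2$; this is possible because $\widetilde{\psi_2}(\sigma)\in(\Rx)^2$, the factor $\gamma(q_1)^{2m}$ being a square and $-1$ being a square in $\Rx$ (immediate when $\mathrm{char}(R)=2$ or when $-1\in(F^\times)^2$, and a consequence of the Gauss-sum remark above otherwise); alternatively one may simply invoke perfectness of $\Sp$ to see that the image of $\psi_2$ coincides with $(\Rx)^2$. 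This yields the short exact sequence $1\to\mu_2(R)\to\Mpn{2}\to\Sp\to1$, the restriction of (\ref{ses2}). Finally, when $\mathrm{char}(R)\neq2$ the integral domain $R$ satisfies $\mu_2(R)=\{\pm1\}$, a group of order $2$, so $\Mpn{2}$ is indeed a double cover of $\Sp$.
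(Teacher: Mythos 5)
Your proposal is correct and follows essentially the same route as the paper: it reduces via Lemma~\ref{omsuf} to producing $\widetilde{\psi_2}$ satisfying (\ref{relpsi}) with $n=2$, takes $\widetilde{\psi_2}=1$ when $-1\in(F^\times)^2$, and otherwise sets $\widetilde{\psi_2}(\sigma)=\left(\frac{\det(-\beta)}{-1}\right)\gamma(q_1)^{2m}$, verifying the relation through (\ref{formulagamma}), exactly as in the text. Your explicit check that $\widetilde{\psi_2}$ takes values in $(\Rx)^2$ (so that $\pi$ restricted to $\ker\psi_2$ really surjects onto $\Sp$) spells out a point the paper leaves implicit in the diagram discussion, but it is the same argument.
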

\noindent
Now we want to see if this reduction is optimal in the sense that there does not exist any $\Mpn{1}$ fitting into the diagram. If this is the case, then the group $\Mpn{2}$ is the minimal subgroup of $\Mp$ which is a central extension of $\Sp$ and therefore is called \emph{reduced metaplectic group}.

\begin{teor}\label{nonsplitta}
Let $\mathrm{char}(R)\neq 2$. Then there does not exist a character $\psi: Mp(W)\to R^\times$ such that $\psi_{|R^\times}=id$.
\end{teor}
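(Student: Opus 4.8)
Suppose, towards a contradiction, that a character $\psi\colon\Mp\to\Rx$ with $\psi_{|\Rx}=\mathrm{id}$ exists. The mechanism I would use repeatedly is that $\Rx$ is abelian, so $\psi$ kills every commutator and conjugation relations become trivial under $\psi$. From the defining relation $\s=\gamma(f)\s'$ of the Weil factor (Definition \ref{defgamma}) together with (\ref{relgamma1}), one obtains in $\Mp$, for every $f\in Q^{nd}(X)$ with $\rho=\rho(f)$, the relation
\[
\d'(\rho^{-1})\,\t(f)\,\d'(-\rho^{-1})\,\t(f)=\gamma(f)\,\t(-f)\,\d'(\rho^{-1}),\qquad\text{where }\d'(-\rho^{-1})=\d'(\rho^{-1})^{-1}.
\]
Applying $\psi$: the two $\d'$-factors cancel, $\psi\circ\t\colon(Q(X),+)\to\Rx$ is a homomorphism (so $\psi(\t(-f))=\psi(\t(f))^{-1}$), and $\psi(\gamma(f))=\gamma(f)$; this collapses to $\psi(\t(f))^{3}=\gamma(f)\,\psi(\d'(\rho^{-1}))$. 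Since $\mathrm{char}(F)\neq2$, the form $4f$ equals both $f+f+f+f$ and $f^{2\,\mathrm{id}_X}$, so by (\ref{relazMp}) and commutativity $\psi(\t(f))^{4}=\psi(\t(4f))=\psi(\t(f))$, i.e. $\psi(\t(f))^{3}=1$. Hence
\[
\psi\bigl(\d'(\rho(f)^{-1})\bigr)=\gamma(f)^{-1}\qquad\text{for all }f\in Q^{nd}(X).
\]

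Next I would use the exact identities $\d'(\alpha\circ\beta)=\d(\alpha)\d'(\beta)$ and $\d'(\beta)^{-1}=\d'(-\beta^{*})$ from Subsection \ref{applMp}, which combine to give $\d'(\beta_1)\d'(\beta_2)=\d(-\beta_1\beta_2^{*-1})$ in $\Mp$. As $\psi\circ\d\colon\Aut(X)\to\Rx$ is a homomorphism into an abelian group, it factors through the determinant (e.g. because $\mathrm{SL}(X)$ is perfect, $F$ being infinite; or directly on the diagonal matrices that occur), say $\psi\circ\d=\chi_0\circ\det$ with $\chi_0\colon F^{\times}\to\Rx$ a character. Applying this with $\beta_i=\rho(f_i)^{-1}$ for the diagonal forms $f_a=q_{m-1}\oplus aq_1$ and $q_m$ (so that $-\rho(f_a)^{-1}\rho(q_m)=\mathrm{diag}(-1,\dots,-1,-a^{-1})$, $m=\dim_F X$), combining with the previous display and with the multiplicativity of $\gamma$ on the Witt group (Theorem \ref{Witt}, which gives $\gamma(f_a)=\gamma(q_1)^{m-1}\gamma(aq_1)$ and $\gamma(q_m)=\gamma(q_1)^m$), a short computation eliminates all constants and leaves
\[
\gamma(aq_1)=\gamma(q_1)\,\chi_0(a)\qquad\text{for every }a\in F^{\times}.
\]

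Finally I would evaluate $\gamma\bigl(x_1^2-ax_2^2-bx_3^2+abx_4^2\bigr)$ in two ways. Formula (\ref{formulagamma0}) gives $\left(\frac{a}{b}\right)$, the Hilbert symbol. On the other hand, Theorem \ref{Witt} expresses it as $\gamma(q_1)\,\gamma(-aq_1)\,\gamma(-bq_1)\,\gamma(abq_1)$; substituting $\gamma(cq_1)=\gamma(q_1)\chi_0(c)$, $\gamma(-cq_1)=\gamma(cq_1)^{-1}$ (the Witt inverse) and $\chi_0(ab)=\chi_0(a)\chi_0(b)$, every factor of $\gamma(q_1)$ and of $\chi_0$ cancels and the value is $1$. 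Hence $\left(\frac{a}{b}\right)=1$ for all $a,b\in F^{\times}$. But by Theorem \ref{gamma-1} the reduced norm $n$ of the quaternion division algebra over $F$ satisfies $\gamma(n)=-1$, and $n$ is isometric to $x_1^2-a_0x_2^2-b_0x_3^2+a_0b_0x_4^2$ for suitable $a_0,b_0\in F^{\times}$ (using that $\gamma$ is an isometry invariant, Remark \ref{propgamma3}), so $\left(\frac{a_0}{b_0}\right)=\gamma(n)=-1$. This forces $1=-1$ in $R$, contradicting $\mathrm{char}(R)\neq2$, and the theorem follows.

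I expect the only genuine difficulty to be organizational rather than computational: the identities $\psi(\d'(\rho(f)^{-1}))=\gamma(f)^{-1}$ and $\gamma(aq_1)=\gamma(q_1)\chi_0(a)$ are a priori available only for forms living on the fixed space $X$, so every step involving forms of other ranks (the rank-four quaternion norm form, the rank-one forms $\pm cq_1$, and the identity $\gamma(-g)=\gamma(g)^{-1}$) must be handled through the Witt-group multiplicativity of $\gamma$ rather than through $\psi$. Once this bookkeeping is respected the cancellations forcing $\left(\frac{a}{b}\right)\equiv1$ are automatic, and the closing contradiction is exactly the non-triviality of the Hilbert symbol over the local field $F$, i.e. $\gamma(n)=-1$.
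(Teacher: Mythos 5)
Your proof is correct, and it follows a genuinely different route from the paper's, even though both arguments share the same engine: apply the hypothetical splitting character to the relation $\d'(\rho^{-1})\t(f)\d'(-\rho^{-1})\t(f)=\gamma(f)\t(-f)\d'(\rho^{-1})$ coming from Definition \ref{defgamma} and (\ref{relgamma1}), extract too strong a constraint on $\gamma$, and contradict Theorem \ref{gamma-1}. The paper first reduces to the rank-one case: it transports $\psi$ to a character $\psi'$ of $\mathrm{Mp}(F\times F^*)$ via an embedding $\iota:\Omega(F\times F^*)\to\Omega(W)$ and Lemma \ref{omsuf}, kills $\t(f)$ by exhibiting it as a commutator (the identity with $\d(a^{-1})$, $a\neq 0,\pm1$, using (\ref{relazMp})), deduces $\psi'(\d'(\rho_a^{-1}))=\gamma(aq_1)^{\pm1}$, and concludes that $\gamma$ of a diagonalized form can depend only on the rank and the product of its coefficients, which is incompatible with $\gamma(n)=-1$ versus the split class. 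You instead stay in $\Mp$ over the given $X$: you get $\psi(\t(f))^3=1$ by the scaling trick $4f=f^{2\,\mathrm{id}_X}$ (a neat replacement for the paper's commutator computation, and it needs only that $\psi$ lands in an abelian group), then combine two instances of $\psi(\d'(\rho(f)^{-1}))=\gamma(f)^{-1}$ through the exact relation $\d'(\beta_1)\d'(\beta_2)=\d(-\beta_1\beta_2^{*-1})$ and the factorization of $\psi\circ\d$ through $\det$ (valid since $\mathrm{SL}(X)$ is the commutator subgroup of $\Aut(X)$ for the infinite field $F$, or by the direct argument on the diagonal matrices that actually occur), obtaining $\gamma(aq_1)=\gamma(q_1)\chi_0(a)$ with $\chi_0$ multiplicative; this forces the Hilbert symbol in (\ref{formulagamma0}) to be identically $1$, against $\gamma(n)=-1$. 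What each approach buys: the paper's reduction keeps all bookkeeping in rank one, at the cost of the extension step $\iota$ and Lemma \ref{omsuf} (which is stated rather tersely, and whose printed matrix for $\iota$ appears to contain a typo), while your argument avoids that reduction entirely; the rank-bookkeeping issue you flag is handled correctly, since the identity $\psi(\d'(\rho(f)^{-1}))=\gamma(f)^{-1}$ is only ever invoked for the rank-$m$ forms $q_{m-1}\oplus aq_1$ and $q_m$ on $X$, and all passages to other ranks go through Theorem \ref{Witt} and Proposition \ref{propgamma1}, exactly as required.
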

\begin{proof}
Let suppose the existence of such $\psi$. Then there exists a character $\psi': \mathrm{Mp}(F\times F^*)\to R^\times$ such that $\psi'_{|R^\times}=id$. In fact the extension by triviality 
$$\begin{array}{rcc} \iota:\Omega(F\times F^*)& \to&\Omega(W)\\
\begin{pmatrix}a&b\\ c&d\end{pmatrix} & \mapsto & \begin{pmatrix}a&0&b&0\\0&\I_{n-1}&0&\I_{n-1} \\ c&0&d&0\\0&\I_{n-1}&0&\I_{n-1} \end{pmatrix}
 \end{array}$$
is such that $\sigma''=\sigma\sigma'$ yields $\iota(\sigma'')=\iota(\sigma)\iota(\sigma')$. Then $\widetilde \psi':=\widetilde \psi\circ\iota$ satisfies the relation $$\widetilde \psi'(\sigma'')=\gamma(f_0)^{-1}\widetilde \psi'(\sigma)\widetilde \psi'(\sigma')$$
and Lemma \ref{omsuf} implies the existence of $\psi'$. Clearly $\psi'$ takes values $1$ on the group of commutators of $\mathrm{Mp}(F\times F^*)$. By (\ref{relazMp}) we have $$\t\left(\frac{c}{1-a^2}x^2\right)\d\left(a^{-1}\right)\t\left(-\frac{c}{1-a^2}x^2\right)\d\left(a\right)=\t\left(\frac{c}{1-a^2}x^2\right)\t\left(-\frac{ca^2}{1-a^2}x^2\right)=\t\left(cx^2\right)$$
for every $a\notin\{0,1,-1\}$ in $F$ and every $c\in F$. Then for every quadratic form $f$ on $F$, $\t(f)$ is a commutator of $\mathrm{Mp}(F\times F^*)$ and so $\psi'(\t(f))=1$. By Definition \ref{defgamma} we obtain the equality 
$$\d'(\rho^{-1})\t(f)\d'(-\rho^{-1})\t(f)=\gamma(f)\t(-f)\d'(\rho^{-1})$$
in $\mathrm{Mp}(F\times F^*)$ for every $f\in Q^{nd}(F)$ associated to $\rho$ and applying $\psi'$ we obtain $\gamma(f)=\psi'(\d'(\rho^{-1}))$. So, if we denote by $\rho_a$ the symmetric isomorphism associated to $aq_1:x\longrightarrow ax^2$ we obtain
$$\gamma\left(aq_1\right)=\psi'(\d'(\rho_a^{-1}))=\psi'(\d(2a))\psi'(\d'(\rho_1^{-1})).$$
Now, since every quadratic form $f$ over $F$ is of the form $f(x)=\sum_{i=1}^ma_ix_i^2$, we can conclude that $\gamma(f)=\prod_{i=1}^m\psi'(\d(2a_i))\psi'(\d'(\rho_1^{-1}))^m$ depends only on $m$ and on the discriminant. But this implies that $\gamma$ takes the same value on every non-degenerate quadratic form on a $4$-dimensional vector space over $F$ with discriminant equal to $1$. But this contradicts Theorem \ref{gamma-1}.
\end{proof}

\noindent
We shall remark that, if $R$ has characteristic $2$, then necessarily $\gamma(f)=1$ for every quadratic form $f$. Then Theorem \ref{nonsplitta} is clearly false and the sequence (\ref{ses2}) splits yielding the existence of $\Mpn{1}\cong \Sp$.

\bigskip 

\noindent
We conclude by saying that we can restrict the representation of the metaplectic group given by (\ref{Weilrep}) to a representation of the reduced metaplectic group. This is the \textbf{Weil representation} defined over $R$. As pointed out in the introduction, the relevance of having an explicit form for this representation lies in the fact that its understanding has important applications. Considering $R$ in whole generality may help understand more deeply the essential features underlying results like Howe and Shimura correspondences. A more concrete question is the following. Given a morphism of rings $R_1\to R_2$ and fixed two smooth non-trivial characters $\chi_1:F\to R_1$ and $\chi_2:F\to R_2$, it would be interesting to study the relationships between metaplectic groups and the Weil representation respectively over $R_1$ and $R_2$.



\cleardoublepage
\singlespacing

\bibliographystyle{alpha}
\bibliography{groupemetaplectique2}
\end{document}